\newcommand\A{\ensuremath{\mathcal{A}}}
\newcommand\D{\ensuremath{\mathcal{D}}}
\newcommand\E{\ensuremath{\mathcal{E}}}
\newcommand\Mhat{\ensuremath{\hat{\mathcal{M}}}}
\renewcommand\L{\mathcal{L}}
\renewcommand\P{\ensuremath{\mathscr{P}}}
\newcommand\N{\mathscr{N}}
\newcommand\sL{\mathscr{L}}
\newcommand\U{\ensuremath{\mathcal{U}}}
\newcommand\V{\ensuremath{\mathcal{V}}}
\newcommand\cl{\mathrm{cl}}
\newcommand\up{\mathrm{up}}
\newcommand\birthday{\mathrm{b}}
\newcommand\fbirthday{\tilde{\mathrm{b}}}
\newcommand{\sh}{\textrm{\raisebox{0.85pt}{\tiny \#}}}
\newcommand\cgtwo[2]{\combgame{\{#1 \cgslash #2\}}}
\newcommand\fatGL{\mathspoofwidth{G^R}{G^L}}
\newcommand\oast{{\scriptscriptstyle\mathrlap{\otimes}\oplus}}
\newcommand\boldsc[1]{{\fontencoding{T1}\scshape\selectfont #1}}
\newcommand\letree[1]{\cgtree[unit=0.25cm,nodesep=0]{#1}} 
\newcommand\en{\bullet} 
\newtheorem{theorem}{Theorem}[section]
\newtheorem{lemma}[theorem]{Lemma}
\newtheorem{proposition}[theorem]{Proposition}
\newtheorem{definition}[theorem]{Definition}
\newtheorem{corollary}[theorem]{Corollary}
\newtheorem*{openproblem}{Open Problem}
\newenvironment{example}{\begin{proof}[Example]}{\end{proof}}
\newenvironment{examples}{\begin{proof}[Examples]}{\end{proof}}
\newlength\spoofwidthtmp
\newcommand\spoofwidth[3][c]{%
\settowidth{\spoofwidthtmp}{#2}%
\makebox[\spoofwidthtmp][#1]{#3}%
}
\newcommand\mathspoofwidth[3][c]{\spoofwidth[#1]{$#2$}{$#3$}}
\title{On the General Dead-Ending Universe of Partizan Games\\\vspace{0.1in}{\large \textsc{preprint}}}
\author{Aaron N. Siegel}
\begin{document}

\maketitle

\begin{abstract}
The universe $\E$ of dead-ending partizan games has emerged as an important structure in the study of mis\`ere play. Here we attempt a systematic investigation of the structure of $\E$ and its subuniverses. We begin by showing that the dead-ends exhibit a rich ``absolute'' structure, in the sense that they behave identically in any universe in which they appear.

We will use this result to construct an uncountable family of dead-ending universes and show that they collectively admit an uncountable family of distinct comparison relations. We will then show that whenever the ends of a universe $\U \subset \E$ are computable, then there is a constructive test for comparison modulo~$\U$.

Finally, we propose a new type of generalized simplest form that works for arbitrary universes (including universes that are not dead-ending), and that is computable whenever comparison modulo $\U$ is computable. In particular, this gives a complete constructive theory for subuniverses of $\E$ with computable ends. This theory has been implemented in \texttt{cgsuite} as a proof of concept.

As an application of these results, we will characterize the universe generated by mis\`ere \textsc{Domineering}, and we will compute the mis\`ere simplest forms of $2 \times n$ \textsc{Domineering} rectangles for small values of~$n$.
\end{abstract}

\section{Introduction}

The study of mis\`ere play of partizan games, long considered mysterious, has exploded into an area of active interest over the past 15 years. Much of this work has focused on localizing the usual equivalences to a restricted subset of partizan games. Denote by $\mathcal{M}$ the universe of all partizan games; then for any subset $\A \subset \mathcal{M}$ that is closed under addition, one can define the usual relations:
\begin{align*}
G \geq_\A H &\quad\text{if}\quad o(G + X) \geq o(H + X) \quad\text{for all}\quad X \in \A \\
G \equiv_\A H &\quad\text{if}\quad o(G + X) = o(H + X) \quad\text{for all}\quad X \in \A
\end{align*}
Here and throughout this paper, $o(G)$ denotes the mis\`ere outcome class of~$G$; see \cite{Sie13} for the necessary background. It is customary to define these relations only for $G,H \in \A$, but in this paper we will have occasion to consider them for $G$ and $H$ ranging over all partizan games, and so define them as such.

A well-known example is the universe $\D$ of \textbf{dicotic games} (or simply~\textbf{dicots}): a game $G \in \mathcal{M}$ is a dicot if every nonempty subposition of $G$ has at least one Left option \text{and} at least one Right option. Complementary to the dicots are the \textbf{ends}:

\begin{definition}
A partizan game $G$ is a \textbf{Left (resp.~Right) end} if $G$ has no Left (resp.~Right) option.
\end{definition}
Note that $0$ is the unique game that is both a dicot and an end.

The behavior of dicots in mis\`ere play is particularly clean, since neither player can win until an empty subposition is reached; conversely, ends are the source of a great deal of combinatorial complexity, and their general structure is still not fully understood. Recent work due to Milley, Renault, and others \cite{LMNRS} \cite{Mil13} \cite{MR13} has identified a natural subclass of ends that has a more approachable structure.

\begin{definition}
A partizan game $G$ is a \textbf{Left (resp.~Right) dead-end} if every subposition of $G$ is a Left (resp.~Right) end. The set of Left dead-ends is denoted by~$\mathcal{L}$.
\end{definition}

\begin{definition}
A partizan game $G$ is a \textbf{dead-ending game} if every end in $G$ is a (Left or Right) dead-end. The set of dead-ending games is denoted by~\E.
\end{definition}

Related work of Larsson, Nowakowski, and Santos \cite{LNS} has revealed a rich hierarchy of \textbf{(absolute) universes} of partizan games.

\begin{definition}[Larsson--Nowakowski--Santos]
\label{definition:universe}
A set $\U \subset \mathcal{M}$ is a \textbf{universe}\footnote{Some authors use the term \textbf{universe} more generally, requiring only that conditions (i)-(iii) be satisfied, with an \textbf{absolute universe} satisfying additionally~(iv). We prefer to reserve the term \textbf{universe} for sets that satisfy (i)-(iv) and will often omit the adjective ``absolute.''} (or \textbf{universally closed}) if it satisfies each of four closure properties:
\begin{enumerate}
\item[(i)] (hereditary closure) If $G \in \U$, then every $G^L \in \U$ and every $G^R \in \U$.
\item[(ii)] (additive closure) If $G \in \U$ and $H \in \U$, then $G + H \in \U$.
\item[(iii)] (conjugate closure) If $G \in \U$, then $\overline{G} \in \U$.
\item[(iv)] (dicotic closure) If $\mathscr{G}^L \subset \U$ and $\mathscr{G}^R \subset \U$ are nonempty finite subsets of~\U, then $\combgame{\{\smash{\mathscr{G}^L}|\smash{\mathscr{G}^R}\}} \in \U$.
\end{enumerate}
We say a set $\A \subset \mathcal{M}$ is \textbf{simply closed} (or just \textbf{closed}) if it satisfies (i)--(ii), but not necessarily (iii)--(iv).
\end{definition}
Every universe $\U$ satisfies $\D \subset \U \subset \mathcal{M}$; we say $\U$ is a \textbf{dead-ending universe} if also $\U \subset \E$.

It is natural to ask three fundamental questions of each universe~\U:
\begin{enumerate}
\item[(Q1)] Is there a constructive test for $G \geq_\U H$, i.e., is $\geq_\U$ computable?
\item[(Q2)] Do games in $\U$ admit simplest forms, and if so, how does one calculate them?
\item[(Q3)] How many distinct games (up to equivalence modulo~$\U$) are there of birthday ${\leq n}$?
\end{enumerate}

After a survey of preliminaries in Section~\ref{section:preliminaries}, the bulk of this paper consists of two main developments:
\begin{itemize}
\item Sections \ref{section:left-ends} and \ref{section:comparison} attempt a systematic study of dead-ending games. In Section~\ref{section:left-ends}, we show that the Left dead-ends have a surprisingly rigid (``absolute'' or ``multiversal'') structure: they behave the same (relative to one another) in any universe in which they appear. In particular, \emph{comparison} and \emph{simplest forms} for Left dead-ends are independent of the universe under consideration.

In Section~\ref{section:comparison}, we apply these results to show that there are uncountably many subuniverses $\U \subset \E$, which collectively admit uncountably many distinct comparison relations~$\geq_\U$. Moreover, for any such~\U, the comparison relation $\geq_\U$ is completely determined (over all of~$\mathcal{M}$) by its behavior on~\D. This shows that not all comparison relations are computable, even when restricted to~\D.

We will show, additionally, that $\geq_\U$ \emph{is} in fact computable whenever the Left ends of $\U$ are computable (in a sense defined below, cf.~Corollary~\ref{corollary:computable}). In particular, $\geq_\U$~is computable for any dead-ending $\U$ that is finitely generated as an extension of~\D.

\item Section \ref{section:invariant-forms} addresses the question of simplest forms in the general universe~\U. We will show that there are mis\`ere universes $\U$ that do not admit simplest forms for all $G \in \U$, in the traditional sense of a simplest $K \in \U$ with $K \equiv_\U G$. However, we will also show that simplest forms can nonetheless be obtained by enlarging $\U$ to a wider class $\Mhat$ of \textbf{augmented games}: for every universe $\U$ and every $G \in \U$, there is a unique simplest $K \in \Mhat$ with $K \equiv_\U G$. This construction works in any universe; it does not require the assumption $\U \subset \E$.

Moreover, these augmented simplest forms can be effectively computed whenever the comparison relation $\geq_\U$ is computable. This gives, in combination with the preceding results, computable simplest forms for every computable dead-ending universe $\U \subset \E$, including for any such $\U$ that is finitely generated as an extension of~\D. General comparison and simplest forms for computable dead-ending universes have been implemented in \texttt{cgsuite}~\cite{cgsuite} as a proof of concept.
\end{itemize}

As an application of these results, in Section~\ref{section:domineering} we will give a complete description of the universe of mis\`ere \textsc{Domineering}, including an exact analysis of all $1 \times n$ \textsc{Domineering} rectangles, as well as the simplest forms of $2 \times n$ rectangles for small~$n$ (calculated modulo the \textsc{Domineering} universe).

Finally, Section~\ref{section:whats-next} discusses some intriguing further directions, including generalizations to scoring universes and to non-dead-ending mis\`ere universes.


\section{Background and Preliminaries}
\label{section:preliminaries}

Partizan games in mis\`ere play remained largely inscrutable until 2007, when Mesdal and Ottaway obtained the first significant results in the theory~\cite{MO07}. Later that same year, the present author built on their work to answer (Q1)--(Q3) in the case of the full mis\`ere universe~$\mathcal{M}$ \cite{Sie13}~\cite{Sie15pmcf}. In particular,


\begin{theorem}[Siegel \cite{Sie13} \cite{Sie15pmcf}]
\label{theorem:full-comparison}
$G \geq_\mathcal{M} H$ if and only if:
\begin{enumerate}
\item[(a)] For every $G^R$, either there is some $H^R$ with $G^R \geq_\mathcal{M} H^R$, or else there is some $G^{RL}$ with $G^{RL} \geq_\mathcal{M} H$;
\item[(b)] For every $H^L$, either there is some $G^L$ with $G^L \geq_\mathcal{M} H^L$, or else there is some $H^{LR}$ with $G \geq_\mathcal{M} H^{LR}$;
\item[(c)] If $H$ is a Left end, then so is $G$;
\item[(d)] If $G$ is a Right end, then so is $H$.
\end{enumerate}
\end{theorem}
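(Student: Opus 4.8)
The plan is to prove the characterization of $\geq_\mathcal{M}$ by the standard two-directional argument for misère order relations, mirroring the well-known proof of the normal-play comparison theorem of Conway but accounting for the misère outcome function. Recall that $o(G)$ is determined by the \emph{Left outcome} and \emph{Right outcome} of $G$, which satisfy the recursion: in misère play, an end (the player to move has no move) \emph{wins}, so a Left end is a Left win when Left must move, etc. The key preliminary observation I would set up is a ``one-move'' characterization: $G \geq_\mathcal{M} H$ holds iff for every $X \in \mathcal{M}$, Left winning $H + X$ (going first, resp.\ going second) implies Left winning $G + X$ (going first, resp.\ going second). Conditions (c) and (d) are exactly the end-sensitivity that misère play injects and that has no analogue in normal play; conditions (a) and (b) are the familiar ``domination/reversibility-style'' recursive clauses.

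For the \textbf{forward direction} (assume $G \geq_\mathcal{M} H$, derive (a)--(d)), I would argue each clause by exhibiting a distinguishing game $X$ if the clause fails. For (c): if $H$ is a Left end but $G$ is not, take $X = 0$ (or a suitable small game); then in $H + 0 = H$ Left-to-move loses immediately (Left end in misère is a Left \emph{win}—I must be careful with the convention, so let me instead say: a Left end played by Left with Left to move means Left cannot move and hence, in misère, \emph{wins}), so $o(H)$ is favorable to Left in a way $o(G)$ cannot match, contradicting $G + 0 \geq H + 0$. The cleanest route is to pick $X$ to be the appropriate "probe" from the theory of misère outcome classes (e.g.\ the games $\{0\mid 0\}^{\pm}$ or ends) so that the outcome of $H+X$ forces Left to move into the end. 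For (a): if some $G^R$ has no $H^R$ dominating it and no $G^{RL}$ dominating $H$, one builds $X$ so that in $G + X$ Right can move to $G^R + X$ and then Left is stuck (each $G^R + X^L$ and $G^{RL} + X$ is a Right win), while $H + X$ remains a Left win; the construction of $X$ is the technical heart and uses an inductive hypothesis on simpler positions. Clause (b) is the conjugate/mirror of (a).

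For the \textbf{reverse direction} (assume (a)--(d), prove $G + X \geq_\mathcal{M} H + X$ for all $X$, i.e.\ $o(G+X) \geq o(H+X)$), I would induct on the total birthday of $G$, $H$, and $X$, and show: if Left wins $H + X$ moving first, then Left wins $G + X$ moving first (and similarly for Left moving second). Suppose Left, moving first in $H+X$, has a winning move. If that move is in $X$, to $H + X^L$, then by induction (using $G \geq_\mathcal{M} H$ and clauses applied at lower rank) Left wins $G + X^L$, hence $G + X$. If the winning move is in $H$, to $H^L + X$: by clause (b) either some $G^L \geq_\mathcal{M} H^L$, and then Left moves to $G^L + X$ and wins by induction; or some $H^{LR}$ has $G \geq_\mathcal{M} H^{LR}$, in which case I track Right's responses and use that $G+X \geq H^{LR}+X$ to recover the win. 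The end-conditions (c), (d) are precisely what is needed to handle the base cases where a summand becomes an end mid-play and the ``who-moves-wins'' parity flips; without them the induction breaks, which is the structural reason they appear.

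The \textbf{main obstacle} I anticipate is the reverse direction's bookkeeping: misère play lacks the convenient ``a player who can move in $G+H$ and is losing must be losing in both $G$ and $H$'' decomposition that makes normal-play proofs short, so one cannot simply say ``Left moves in whichever component.'' One must instead carefully combine the recursive clauses (a)/(b) with an auxiliary ``Left-end / Right-end propagation'' lemma and handle the reversing clauses ($G^{RL} \geq_\mathcal{M} H$ and $G \geq_\mathcal{M} H^{LR}$) by an extra layer of case analysis on the opponent's reply. I would isolate the needed properties as small lemmas first—monotonicity of outcomes under the candidate relation, behavior of ends under addition, and a transitivity/substitution fact—so that the main induction in the reverse direction reduces to a finite, if intricate, case check. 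The forward direction's construction of separating games $X$ is less deep but requires choosing the right "atomic" misère probes; I expect to cite or re-derive the small catalogue of games needed there from the background in~\cite{Sie13}.
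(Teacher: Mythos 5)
Your overall architecture is the standard one and matches what the paper does for its generalization (the paper only cites Theorem~\ref{theorem:full-comparison} to \cite{Sie13}~\cite{Sie15pmcf}; its in-text proof of the analogous statement is Theorem~\ref{theorem:lnsequiv-withsigma}): the reverse direction by induction on the distinguishing position $X$, with the three-case analysis (move in $X$, move in the game-summand handled via clause (b) including the reversing subcase, and the end case handled by the proviso) is essentially correct as you sketch it. The problem is the forward direction, where your proposal has two genuine gaps. First, for clauses (a)/(b) you explicitly defer ``the technical heart'': the construction of a separating $X$ when the maintenance property fails. That construction is the substance of the proof; in the paper's treatment it is carried by the adjoint $G^\circ$ (with $o(G+G^\circ)=\mathscr{P}$), Lemma~\ref{lemma:notgeq}, and the downlink Lemma~\ref{lemma:downlinked}, where the witness is built explicitly as $\combgame{\{Y_j,(G^R)^\circ|X_i,(H^L)^\circ\}}$ with end-case adjustments. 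Saying the construction ``uses an inductive hypothesis on simpler positions'' does not supply it.

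Second, and more seriously, your treatment of the proviso (c)/(d) would fail. Taking $X=0$ does not separate: if $H$ is a Left end then $o^L(H)=\mathscr{L}$, but a non-Left-end $G$ can also satisfy $o^L(G)=\mathscr{L}$ (e.g.\ $G=\combgame{\{\cgstar|\cgstar\}}$), so $o(G)\geq o(H)$ is perfectly possible and no contradiction arises; your hedge about ``appropriate probes'' does not identify one. The real content of (c) in the full universe $\mathcal{M}$ is the fact, noted in Section~\ref{section:preliminaries}, that the Left ends are the \emph{only} Left $\mathcal{M}$-strong games: one must show that for every $G$ that is not a Left end there exists a Left end $X$ (built from $G$, e.g.\ from adjoints of Left options of followers of $G$, deep enough to exhaust Left's moves) with $o(G+X)\leq\mathscr{P}$. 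This is precisely what distinguishes the $\mathcal{M}$-proviso ``$G$ is a Left end'' from the weaker ``$G$ is Left $\U$-strong'' of Theorem~\ref{theorem:lnsequiv}, and it is entirely missing from your plan (your discussion of (c) also briefly confuses the mis\`ere convention for ends before correcting itself). Until both constructions are supplied, the forward direction is not proved.
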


The phrasing of Theorem~\ref{theorem:full-comparison} is slightly different in~\cite{Sie13}~\cite{Sie15pmcf}; here 
we use a modified version of the more concise formulation due to Larsson et al~\cite{LNS}. The conditions (c) and (d) are commonly called the \textbf{proviso}, a term that goes back to Conway~\cite{Con01}. Following Larsson et al, we call (a) and (b) the \textbf{maintenance property}.

Theorem~\ref{theorem:full-comparison} was followed several years later by a similar analysis of dicotic games by Dorbec et al~\cite{DRSS15}.

\begin{theorem}[Dorbec et al \cite{DRSS15}]
\label{theorem:dicot-comparison}
$G \geq_\D H$ if and only if:
\begin{enumerate}
\item[(a)] For every $G^R$, either there is some $H^R$ with $G^R \geq_\D H^R$, or else there is some $G^{RL}$ with $G^{RL} \geq_\D H$;
\item[(b)] For every $H^L$, either there is some $G^L$ with $G^L \geq_\D H^L$, or else there is some $H^{LR}$ with $G \geq_\D H^{LR}$;
\item[(c)] If $H$ is a Left end, then $o(G) \geq \N$;
\item[(d)] If $G$ is a Right end, then $o(H) \leq \N$.
\end{enumerate}
\end{theorem}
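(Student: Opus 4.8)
The plan is to prove both implications by a simultaneous induction on the formal birthdays of $G$ and $H$, following the template of the proof of Theorem~\ref{theorem:full-comparison} and isolating the one place where the restriction to dicotic test games forces a genuine change from the $\mathcal{M}$-case, namely the form of the proviso.

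For the ``only if'' direction, suppose $o(G+X) \geq o(H+X)$ for every dicot $X$. The proviso conditions are almost immediate: if $H$ is a Left end then $o(H) \geq \N$, since Left, having no move, wins $H$ when moving first, and taking $X = 0$ then gives $o(G) \geq o(H) \geq \N$; condition~(d) is the conjugate of~(c) under the symmetry $G \geq_\D H \iff \overline{H} \geq_\D \overline{G}$. This single step is exactly where $\D$ departs from $\mathcal{M}$: in $\mathcal{M}$ one may take $X$ to be an arbitrarily large Left end and thereby force $G$ itself to be a Left end, whereas the only Left end inside $\D$ is $0$ (the unique dicotic end), which reveals only the outcome of~$G$. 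The maintenance conditions (a) and (b) are then obtained by the standard distinguishing-game construction: if, say, (a) fails for some $G^R$ --- so that $G^R \not\geq_\D H^R$ for every $H^R$ and $G^{RL} \not\geq_\D H$ for every $G^{RL}$ --- then, unwinding the definition of $\not\geq_\D$, each such failure is witnessed by a dicot, and these witnesses are assembled, using dicotic closure (Definition~\ref{definition:universe}(iv)), into a single dicot $X$ against which Right's move into the $G$-summand of $G + X$ refutes every Left reply, so that $o(G + X) \not\geq o(H+X)$. The gadget is the same one used for Theorem~\ref{theorem:full-comparison}, and it never leaves~$\D$.

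For the ``if'' direction, assume (a)--(d), fix an arbitrary dicot $X$, and show that whenever Left wins $H + X$, for a given choice of first player, Left also wins $G + X$ with the same player to move; together with the conjugate statement this yields $o(G+X) \geq o(H+X)$. This is a strategy-stealing argument in the familiar style. The routine cases occur when Left's prescribed move lies in the $X$-summand (Left copies it) or is $H \to H^L$ matched by some $G^L \geq_\D H^L$ (Left copies via that option and recurses using $G^L \geq_\D H^L$ directly). The base cases arise when play reaches an end: a Left end of a position $H' + X'$ forces the follower $X'$ of $X$ to be $0$, since the only dicotic Left end is $0$, so the game has collapsed to a matched pair of ends, where the proviso --- available inductively for every matched pair encountered along the way, not merely for $G$ and $H$ themselves --- supplies the required outcome.

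The step I expect to be the main obstacle is the reversing alternative in the strategy stealing: the branch of~(b) in which $H \to H^L$ is \emph{not} matched by an option of $G$ but instead $G \geq_\D H^{LR}$ for some $H^{LR}$ (and its mirror in~(a)). One must show that Left's corresponding deviation is genuinely safe against every reply of Right, applying the induction hypothesis in exactly the right form and keeping the proviso bookkeeping consistent through the deviation; in particular one has to verify that the weaker, outcome-based proviso of Theorem~\ref{theorem:dicot-comparison} --- rather than the structural proviso ``$G$ is a Left end'' of Theorem~\ref{theorem:full-comparison} --- still closes every such case. I would also want to check, in the maintenance argument, that the assembled dicot genuinely witnesses the failure of $\geq_\D$ and not merely of $\equiv_\D$, since it is the order relation that is at issue.
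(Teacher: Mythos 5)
Your overall route is sound and is essentially the standard one; note that the paper itself does not prove Theorem~\ref{theorem:dicot-comparison} but quotes it from Dorbec et al.~\cite{DRSS15}, and internally it is just the $\U = \D$ instance of Theorem~\ref{theorem:lnsequiv}, since $0$ is the only dicotic end and hence ``Left $\D$-strong'' means exactly $o(G) \geq \N$; the paper's own fully detailed proof of the analogous general statement is Theorem~\ref{theorem:lnsequiv-withsigma}, via Lemmas \ref{lemma:notgeq} and~\ref{lemma:downlinked}. Your handling of the proviso in the ``only if'' direction (take $X = 0$; get (d) by conjugation) is exactly right, and your observation that the distinguishing gadget never leaves $\D$ is the key point (adjoints are dicots, and the gadget is built by dicotic closure).

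Two places need tightening. First, ``the witnesses are assembled, using dicotic closure, into a single dicot'' compresses the real work: a single witness to $A \not\geq_\D B$ has only one ``polarity,'' whereas the gadget needs witnesses of both polarities (some $T$ with $o(A+T) \leq \P$ and $o(B+T) \geq \N$, and some $U$ with $o(A+U) \leq \N$ and $o(B+U) \geq \P$); producing the second from the first requires the adjoint construction (the analogue of Lemma~\ref{lemma:notgeq}), and the assembled gadget also needs adjoint ``reset'' options to defeat Left's replies on the gadget itself (the analogue of Lemma~\ref{lemma:downlinked}). Second, the step you flag as the main obstacle is in fact a non-issue, and your remark that the proviso is ``available inductively for every matched pair encountered along the way'' is off target: in the ``if'' direction one fixes $G,H$ satisfying (a)--(d) and inducts on $X$ alone; subpairs are never re-analysed via (a)--(d), because the relations appearing in (a)--(b) are invoked directly against the same test game (e.g.\ if $o^R(H^L+X)=\sL$ and $G \geq_\D H^{LR}$, then $o^L(H^{LR}+X)=\sL$, so $o^L(G+X) \geq o^L(H^{LR}+X) = \sL$ with no deviation-safety analysis at all), while the proviso (c)--(d) is needed, and available, only for the top pair $(G,H)$ --- precisely in the cases where $H+X$ is a Left end (forcing $X \cong 0$) or where $G+X$ would otherwise be a Right end. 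With those two repairs your plan goes through.
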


Here again, we use the formulation due to Larsson et al (cf.~\cite{LMNRS}), which makes the pattern evident: the maintenance property is the same as in Theorem~\ref{theorem:full-comparison}; only the proviso differs. A major advance followed in 2016, when Larsson, Nowakowski, and Santos unveiled~\cite{LNS} a sweeping generalization of Theorems \ref{theorem:full-comparison}~and~\ref{theorem:dicot-comparison}. It is convenient to make the following definition:

\begin{definition}
Let $\U$ be a mis\`ere universe. We say that:
\begin{enumerate}
\item[(a)] $G$ is \textbf{Left \U-strong} if $o(G + X) \geq \N$ for every Left end $X \in \U$.
\item[(b)] $G$ is \textbf{Right \U-strong} if $o(G + X) \leq \N$ for every Right end $X \in \U$.
\end{enumerate}
\end{definition}

Then the general Theorem can be stated as follows:

\begin{theorem}[Larsson et al \cite{LNS}]
\label{theorem:lnsequiv}
Let $\U$ be any mis\`ere universe. Then $G \geq_\U H$ if and only if:
\begin{enumerate}
\item[(a)] For every $G^R$, either there is some $H^R$ with $G^R \geq_\U H^R$, or else there is some $G^{RL}$ with $G^{RL} \geq_\U H$;
\item[(b)] For every $H^L$, either there is some $G^L$ with $G^L \geq_\U H^L$, or else there is some $H^{LR}$ with $G \geq_\U H^{LR}$;
\item[(c)] If $H$ is a Left end, then $G$ is Left \U-strong;
\item[(d)] If $G$ is a Right end, then $H$ is Right \U-strong.
\end{enumerate}
\end{theorem}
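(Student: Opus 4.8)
The plan is to prove Theorem~\ref{theorem:lnsequiv} by establishing the two implications separately, and I would begin by recalling the two foundational facts about mis\`ere outcomes in a universe that make everything go: first, the recursive characterization of the outcome class of a sum in terms of the options (a ``Left end'' has no Left move, so Right-to-move outcomes are forced), and second, the fact that ``$G \geq_\U H$'' is equivalent to the conjunction ``$o(G+X) \geq o(H+X)$ for all $X \in \U$'' which we may unpack into the two one-sided statements ``Left can win $G+X$ moving second (resp.\ first) whenever Left can win $H+X$ moving second (resp.\ first).'' The key structural tool is that, because $\U$ is a universe (in particular dicotically closed and closed under conjugation and addition), there are enough test games $X$ available: given any partizan game one can build dicotic ``blockers'' in $\U$, and one can also add arbitrary elements of $\U$ to $X$. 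This is what converts the semantic definition of $\geq_\U$ into the syntactic recursion of (a)--(d).

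For the forward direction, assuming $G \geq_\U H$, I would derive each of (a)--(d) by choosing a suitable adversarial test game $X \in \U$ and reasoning about who wins $G+X$ versus $H+X$. Conditions (c) and (d) (the proviso) are the easy part: if $H$ is a Left end, then in $H + X$ with $X$ a Left end of $\U$, Left has no move at all, so Right moving first wins immediately unless the position is empty; pushing this through the definition of $\geq_\U$ forces $o(G+X) \geq \N$ for every Left end $X \in \U$, which is exactly ``$G$ is Left $\U$-strong,'' and (d) is the conjugate statement. The maintenance conditions (a) and (b) require the more delicate argument: one assumes toward a contradiction that (say) some $G^R$ witnesses a failure of (a) — no $H^R$ is dominated by it and no $G^{RL}$ dominates $H$ — and then constructs a specific $X \in \U$ (built dicotically from $H$, its conjugate, and the relevant options, together with a "switch" that lets Right commit to the move to $G^R$) such that Right wins $H+X$ but Left wins $G+X$ when it matters, contradicting $G \geq_\U H$. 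This is the standard "strategy-stealing / option-by-option" pursuit argument, and it is where the dicotic closure axiom (iv) is essential, since the test game must itself lie in~$\U$.

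For the reverse direction, assuming (a)--(d), I would show $o(G+X) \geq o(H+X)$ for every $X \in \U$ by a single induction on the total birthday of $G + H + X$ (using that birthdays are well-founded). Fix $X \in \U$ and suppose Left wins $H + X$ under some convention (moving first or second); I must produce a Left win in $G + X$ under the same convention. Left's move in $G+X$ is guided by the recursion: if Left's winning move in $H+X$ was into the $X$-component, copy it (and apply induction, using that $G \geq_\U H$ persists and $X^L \in \U$ by hereditary closure); if it was into $H$, to some $H^L$, use clause (b) to find either a $G^L \geq_\U H^L$ to move to (then apply induction to $G^L + X \geq_\U H^L + X$) or an $H^{LR}$ with $G \geq_\U H^{LR}$, in which case Left stands pat in $G$ and treats Right's eventual reply appropriately. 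The Right-to-move-in-$H+X$ analysis is symmetric, invoking clause (a). The proviso clauses (c) and (d) are exactly what is needed to handle the base cases where a component becomes empty: if the induction drives us to a position where $H$ (or $X$) is a Left end and it is Left's turn with nothing else to do, ``$G$ is Left $\U$-strong'' guarantees $o(G+X') \geq \N$, which is what keeps Left alive.

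The main obstacle, and the part I expect to consume most of the work, is the construction of the adversarial test games in the forward direction — specifically verifying that the ``pursuit'' gadget that isolates a single failing option $G^R$ (or $H^L$) really does lie in $\U$ and really does force the desired outcome inequality to fail. One must be careful that adding this gadget does not accidentally give the other player a good move elsewhere, and that the gadget's own subpositions (which must all be in $\U$ by hereditary closure) do not sabotage the argument; handling the interaction with the proviso — i.e.\ making sure the test game is or is not an end exactly when needed — is the fiddly core of the proof. The reverse direction, by contrast, is a fairly mechanical (if lengthy) case analysis once the induction is set up, since (a)--(d) are precisely engineered to supply a reply in every case.
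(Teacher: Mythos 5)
Your reverse direction is essentially the paper's argument: the paper (in its proof of the augmented generalization, Theorem~\ref{theorem:lnsequiv-withsigma}, which specializes to this statement) fixes $X \in \U$ and inducts on $X$ alone, using (b) when Left's winning move in $H+X$ is to some $H^L$, copying moves in the $X$-component, and invoking (c) in the end case; note that since (a)--(d) are semantic statements about $\geq_\U$, no induction on $G$ and $H$ is needed there. One small slip: when $H$ and $X$ are both Left ends, the relevant fact is that Left, moving first, has no move and therefore \emph{wins} under the mis\`ere convention, so $o(H+X) \geq \N$ holds automatically and $G \geq_\U H$ transfers this to $G$; your remark about Right moving first is beside the point, though your conclusion is correct.

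The genuine gap is in the forward direction, and it is exactly the part you defer as the ``fiddly core.'' Constructing the distinguishing game is the entire content of the theorem, and the sketch you give would not go through as stated: in mis\`ere play the conjugate is not an additive inverse, and $H + \overline{H}$ is in general not a $\P$-position, so a gadget ``built dicotically from $H$, its conjugate, and the relevant options'' has no mechanism for forcing the outcomes you need --- this is precisely the failure of the normal-play template $o(-H+H)=\P$ that makes the mis\`ere theory hard. The paper's route supplies two ingredients absent from your plan: (1) the adjoint $G^\circ$ (Definition~\ref{definition:adjoint}), a dicot --- hence available in every universe --- with the property that $G + G^\circ$ is always a $\P$-position; and (2) a two-step lemma structure, namely Lemma~\ref{lemma:notgeq} (if $G \not\geq_\U H$ there exist $T, U \in \U$ with $o(G+T)\leq\P$, $o(H+T)\geq\N$ and $o(G+U)\leq\N$, $o(H+U)\geq\P$) feeding into the downlink Lemma~\ref{lemma:downlinked}, which shows that ``no $G^L \geq_\U H$ and $G \geq_\U$ no $H^R$'' is equivalent to the existence of a single $T \in \U$ with $o(G+T)\leq\P$ and $o(H+T)\geq\P$; the witness $T$ has the games from Lemma~\ref{lemma:notgeq} and the adjoints $(G^R)^\circ$ as Left options, the corresponding games and the adjoints $(H^L)^\circ$ as Right options, with separate cases when either side would be empty (this is where the proviso interaction you worry about is actually resolved). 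Condition (a) then follows by applying the downlink lemma to the pair $(G^R, H)$ and contradicting $G \geq_\U H$ via $T$, and (b), (d) follow by conjugate symmetry. Without the adjoint (or an equivalent device) your ``pursuit/switch'' gadget cannot punish the opponent's moves inside the attached components, so as written the only-if direction restates the difficulty rather than resolving it.
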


(The full Theorem proved in \cite{LNS} is still more general than this; it also applies to universes with \textbf{atoms}, such as scoring universes. The version given in Theorem~\ref{theorem:lnsequiv} is all we will need for the bulk of this paper, though we will return to the question of scoring universes in Section~\ref{section:whats-next}.)


The maintenance property (a)--(b) is identical in every mis\`ere universe (and even in normal play), whereas the proviso (c)--(d) depends on the localized meaning of \U-strong. The following facts are readily apparent.
\begin{itemize}
\item Every Left end is automatically Left $\U$-strong, in any universe.
\item In the full universe $\U = \mathcal{M}$, the Left ends are the \emph{only} games that are Left \U-strong (this follows from the proof of Theorem~\ref{theorem:full-comparison}; cf.~\cite{Sie13}).
\item In the dicot universe $\D$, the only end is~$0$, so that $G$ is Left \U-strong if and only if $o(G) \geq \N$ (i.e., Left can win $G$ playing first).
\item If $\U \subset \V$ and $G$ is Left \V-strong, then $G$ is also Left \U-strong.
\end{itemize}

Theorem~\ref{theorem:lnsequiv} does not, in itself, give a constructive test for inequality, since the quantifier in the definition of \U-strong ranges over the entire (infinite) universe~\U. But it \emph{does} show that such a test depends only on identifying whether a given game $G$ is \U-strong.




\subsection*{Universal Closures}

If $\mathcal{A}$ is any set of games, we write $\D(\mathcal{A})$ for the \textbf{universal closure} of~$\mathcal{A}$, the smallest universe containing $\mathcal{A}$ as a subset (formally, $\D(\mathcal{A})$ can be defined as the intersection of all universes containing~$\mathcal{A}$). Similarly, write $\cl(\A)$ for the \textbf{(simple) closure} of~$\mathcal{A}$, the smallest simply closed set containing $\A$ as a subset.

Further examples of dead-ending universes include:
\begin{itemize}
\item The universe $\D(\bar{1})$, where $\bar{1}$ is shorthand for $\combgame{\{\cdot|0\}}$. Since $\bar{1}$ is a follower of every Left dead-end, $\D(\bar{1})$ is the minimal nondicotic subuniverse of~$\E$: if $\U \subset \E$ with $\U \neq \D$, then $\D(\bar{1}) \subset \U$. An element $G \in \D(\bar{1})$ can be viewed as a dicotic tree whose leaf nodes are integers.
\item The universe $\D(\bar{1}0)$, where $\bar{1}0$ is shorthand for $\combgame{\{\cdot|\bar1, 0\}}$. In Section~\ref{section:domineering} we will study this universe and show that it is, in fact, the universal closure of \textsc{Domineering}.
\end{itemize}





The following easy propositions help bring structure to the ``multiverse''.

\begin{proposition}
\label{proposition:endsgenerate}
Let $\U \subset \E$ be any dead-ending universe. Then
\[
\mathcal{D}(\U \cap \mathcal{L}) = \U.
\]
\end{proposition}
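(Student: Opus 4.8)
The plan is to show the two inclusions $\mathcal{D}(\U \cap \mathcal{L}) \subseteq \U$ and $\U \subseteq \mathcal{D}(\U \cap \mathcal{L})$ separately. The first is immediate: $\U \cap \mathcal{L} \subseteq \U$, and $\U$ is a universe, so by minimality of the universal closure $\mathcal{D}(\U \cap \mathcal{L}) \subseteq \U$. The substance is the reverse inclusion, for which I would proceed by induction on the birthday (or formal birthday) of $G \in \U$, showing $G \in \mathcal{D}(\U \cap \mathcal{L})$. Write $\V = \mathcal{D}(\U \cap \mathcal{L})$ for brevity; note $\V$ is a universe, so it is closed under the four operations (i)--(iv), and in particular under the dicotic-closure rule (iv).

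For the inductive step, take $G \in \U$ with all options already known to lie in $\V$ (by hereditary closure (i) of $\U$, each $G^L, G^R \in \U$, and they have smaller birthday, so the induction hypothesis applies). There are three cases. If $G$ has at least one Left option and at least one Right option, then $G = \combgame{\{\mathscr{G}^L \mid \mathscr{G}^R\}}$ with $\mathscr{G}^L, \mathscr{G}^R$ nonempty finite subsets of $\V$, so $G \in \V$ by dicotic closure (iv). If $G$ is a Left end (no Left options) but has a Right option, then $G$ is an end in $\U \subseteq \E$, hence $G$ is a dead-end; since every subposition of a Left dead-end is again a Left end, and $G \in \U$, we get $G \in \U \cap \mathcal{L} \subseteq \V$ directly. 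Wait — I must be careful: $G$ being a Left end and a dead-end means \emph{every} subposition of $G$ is a Left end, so in particular $G$ has no Right options either unless those Right options are themselves Left dead-ends; in any case $G \in \mathcal{L}$, so $G \in \U \cap \mathcal{L} \subseteq \V$. The symmetric case ($G$ a Right end) is handled by conjugate closure, or identically. The base case $G = 0$ is covered since $0 \in \mathcal{L}$ (it is a Left end with only itself as subposition), so $0 \in \U \cap \mathcal{L} \subseteq \V$ — provided $\U$ is nonempty, which holds since $\D \subseteq \U$.

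The main subtlety — really the only place one must think — is the dichotomy ``$G$ is an end'' vs.\ ``$G$ has options on both sides,'' and making sure that in the end case the dead-ending hypothesis $\U \subseteq \E$ is genuinely used to conclude $G \in \mathcal{L}$ rather than merely $G$ being some more complicated end. Once that is pinned down, everything else is the routine verification that a game with Left and Right options is built by rule (iv) from strictly simpler games, combined with the hereditary closure of $\U$ to feed the induction. I expect no real obstacle here; the proposition is ``easy'' as advertised, and the proof is a short structural induction.
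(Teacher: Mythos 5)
Your proof is correct and is essentially the paper's own argument: the paper's one-paragraph proof (each $G \in \U$ lies in the dicotic closure of its end-followers, which are dead-ends by $\U \subset \E$, with conjugate closure supplying the Right dead-ends) is exactly the structural induction you spell out. Your mid-proof hesitation in the Left-end case resolves correctly—an end in a dead-ending game that is a Left end must be a Left dead-end—so no gap remains.
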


\begin{proposition}
\label{proposition:nonewends}
Let $\mathcal{A}$ be any closed set of Left dead-ends. Then
\[
\mathcal{D}(\mathcal{A}) \cap \mathcal{L} = \mathcal{A}.
\]
\end{proposition}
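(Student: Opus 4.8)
The plan is to prove the two inclusions $\mathcal{A} \subseteq \mathcal{D}(\mathcal{A}) \cap \mathcal{L}$ and $\mathcal{D}(\mathcal{A}) \cap \mathcal{L} \subseteq \mathcal{A}$ separately, with essentially all the work in the second one. The first inclusion is immediate: $\mathcal{A} \subseteq \mathcal{D}(\mathcal{A})$ by definition of the universal closure, and $\mathcal{A} \subseteq \mathcal{L}$ is the hypothesis that $\mathcal{A}$ consists of Left dead-ends. So $\mathcal{A} \subseteq \mathcal{D}(\mathcal{A}) \cap \mathcal{L}$.

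For the reverse inclusion, the idea is to give an explicit description of $\mathcal{D}(\mathcal{A})$ by building it up in stages from $\mathcal{A}$ using the four closure operations — hereditary followers, addition, conjugation, and the dicotic bracket $\combgame{\{\mathscr{G}^L|\mathscr{G}^R\}}$ with nonempty finite option sets — and then to show by induction that the only elements of this construction that happen to lie in $\mathcal{L}$ are the ones already in $\mathcal{A}$. Concretely, let $\mathcal{A}_0 = \mathcal{A}$ and let $\mathcal{A}_{n+1}$ be $\mathcal{A}_n$ together with all followers of its elements, all sums $G+H$ with $G,H \in \mathcal{A}_n$, all conjugates $\overline{G}$ with $G \in \mathcal{A}_n$, and all dicotic brackets $\combgame{\{\mathscr{G}^L|\mathscr{G}^R\}}$ with $\mathscr{G}^L, \mathscr{G}^R$ nonempty finite subsets of $\mathcal{A}_n$; then $\mathcal{D}(\mathcal{A}) = \bigcup_n \mathcal{A}_n$. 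Since $\mathcal{A}$ is already closed under (i)–(ii), the follower and sum operations never leave $\mathcal{A}$; and since $\mathcal{A} \subseteq \mathcal{L}$, every conjugate of an element of $\mathcal{A}$ is a \emph{Right} dead-end, hence certainly not in $\mathcal{L}$ unless it is $0$ (which is already in $\mathcal{A}$, since $\mathcal{A}$ is closed under followers and nonempty, or trivially if $\mathcal{A}=\emptyset$). The key observation is that a dicotic bracket $\combgame{\{\mathscr{G}^L|\mathscr{G}^R\}}$, built with a \emph{nonempty} Right option set, always has a Right option and therefore is never a Left end; and more generally, once we have left $\mathcal{A}$ via a conjugation or a bracket, the resulting game has a subposition that is not a Left end (either a Right option of a bracket, or — in the conjugate case — a nonzero Right dead-end as a subposition), so it cannot lie in $\mathcal{L}$. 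Making this precise requires tracking, by induction on $n$, that every element of $\mathcal{A}_n \setminus \mathcal{A}$ has a subposition that is a nonzero Right end, and hence fails to be a Left dead-end.

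The main obstacle is handling sums and followers cleanly in the inductive step, because an element of $\mathcal{A}_{n+1}$ can be a sum $G + H$ where $G \in \mathcal{A}$ but $H \in \mathcal{A}_n \setminus \mathcal{A}$, or a follower of such a sum; one must check that the "bad" subposition of $H$ survives as a subposition of $G+H$ and of its followers. This is true because subpositions of $G+H$ include $G' + H'$ for subpositions $G'$ of $G$ and $H'$ of $H$, and a subposition of $H$ that is a nonzero Right end, added to a Left dead-end subposition of $G$, is again a Right end with a Right option (coming from the $G$ side, unless that side is $0$), and in any case is nonzero — so $G+H$ has a nonzero subposition which is not simultaneously left-optionless at every stage, placing it outside $\mathcal{L}$. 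The one genuinely delicate point is the interaction of addition with the Left-dead-end condition, since $\mathcal{L}$ is defined by a condition on \emph{every} subposition; I expect the cleanest route is to observe that $\mathcal{L}$ is closed under addition (a sum of Left dead-ends is a Left dead-end) and conjugation-free in the sense that $\mathcal{L} \cap \overline{\mathcal{L}} = \{0\}$, and to phrase the induction as: $\mathcal{A}_n \cap \mathcal{L} = \mathcal{A}$ for all $n$, using that any element of $\mathcal{A}_n$ not obtained purely by follower/sum operations from $\mathcal{A}$ acquires an ineliminable non-Left-end subposition. Assembling these observations gives $\mathcal{D}(\mathcal{A}) \cap \mathcal{L} = \bigcup_n (\mathcal{A}_n \cap \mathcal{L}) = \mathcal{A}$, completing the proof.
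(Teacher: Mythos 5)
Your overall strategy --- build $\mathcal{D}(\mathcal{A})$ in stages and prove $\mathcal{A}_n \cap \mathcal{L} = \mathcal{A}$ by induction --- can be made to work, but the invariants you actually state are incorrect, and they miss the step that carries the real content. First, the claim that every element of $\mathcal{A}_n \setminus \mathcal{A}$ has a subposition that is a nonzero Right end is false: the dicotic bracket $\combgame{\{0|0\}}$ (i.e.\ $\ast$) lies in $\mathcal{A}_1 \setminus \mathcal{A}$ (using $0 \in \mathcal{A}$, as you note), and its only subpositions are itself and $0$, neither of which is a nonzero Right end. Relatedly, ``built with a nonempty Right option set \ldots\ therefore never a Left end'' confuses the players --- a bracket fails to be a Left end because its \emph{Left} option set is nonempty --- and ``a Right end with a Right option'' is a contradiction in terms. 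Second, and more seriously, showing that the newly created games are themselves not Left dead-ends is not enough: because of hereditary closure you must also show that every \emph{subposition} of a newly created game that happens to be a Left dead-end already lies in $\mathcal{A}$, and your proposal never establishes this. Your suggested patch --- that the ``bad'' subposition of $H$ survives in every follower of $G+H$ --- is simply false: a follower of $G+H$ has the form $G'+H'$, where $H'$ may lie below or beside the bad subposition, so no witness need survive. What is true, and what the induction needs, is a dichotomy: every such follower either still contains a witness or is assembled entirely from elements of $\mathcal{A}\cup\overline{\mathcal{A}}$.

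The clean repair is to strengthen the invariant to a properly hereditary one, for instance: every subposition of every element of $\mathcal{A}_n$ that is a Left dead-end lies in $\mathcal{A}$, and every subposition that is a Right dead-end lies in $\overline{\mathcal{A}}$. This holds for $\mathcal{A}$, is preserved by followers, sums, conjugates, and dicotic brackets (a bracket has both a Left and a Right option, so it is neither kind of dead-end, and its proper subpositions are handled by induction), and it immediately yields $\mathcal{D}(\mathcal{A}) \cap \mathcal{L} \subseteq \mathcal{A}$. The paper avoids the interleaving problem altogether by a different decomposition: it forms $\mathcal{C} = \cl(\mathcal{A}\cup\overline{\mathcal{A}})$ and checks that this simple closure contains no Left ends outside $\mathcal{A}$; it then proves that the dicotic closure of a set satisfying closure properties (i)--(iii) is already a universe, so this two-stage construction equals $\mathcal{D}(\mathcal{A})$; and dicotic closure adds no new Left ends since brackets have nonempty Left option sets. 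Either route is viable, but as written your argument has a genuine gap at the followers step and leans on a false auxiliary claim.
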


Proposition~\ref{proposition:endsgenerate} shows that every dead-ending universe is the closure of its Left ends; Proposition~\ref{proposition:nonewends} shows that if $\mathcal{A}$ is closed, then passing to the universal closure adds no new Left ends. Taken together, they establish a one-to-one correspondence between closed sets of Left ends and absolute universes, given by $\mathcal{A} \mapsto \D(\mathcal{A})$.

\begin{proof}[Proof of Proposition \ref{proposition:endsgenerate}]
Certainly $\mathcal{D}(\U \cap \mathcal{L}) \subset \U$. For the converse, consider any $G \in \U$ and let $\A$ be the set of ends appearing as followers of~$G$. Then $G$ is in the dicotic closure of~\A. It follows that $\U$ is generated by the set of its ends, and since it is also conjugate closed, the Proposition follows.
\end{proof}

\begin{proof}[Proof of Proposition \ref{proposition:nonewends}]
We will show that:
\begin{enumerate}
\item[(i)] If $\A$ is a closed set of Left ends, then the additive closure $\cl(\A \cup \bar{\A})$ contains no Left ends outside of~\A.
\item[(ii)] If $\mathcal{C}$ satisfies (i)--(iii) in Definition~\ref{definition:universe}, then its dicotic closure is a universe.
\end{enumerate}
Writing $\mathcal{C} = \cl(\A \cup \bar{\A})$, it follows from (i) that $\mathcal{C}$ contains no Left ends outside of~\A, and from (ii) that the dicotic closure of $\mathcal{C}$ is $\D(\A)$. Since dicotic closure also adds no new Left ends, this suffices to prove the Proposition.

\vspace{0.1in}\noindent
(i) $G + H$ is a Left end if and only if $G$ and $H$ are both Left ends. But $\A$ was taken to be closed. By induction, we may assume $G,H \in \A$, whereupon $G + H \in \A$ as well.

\vspace{0.1in}\noindent
(ii) We must show that the dicotic closure of $\mathcal{C}$ also satisfies (i)--(iii) in Definition~\ref{definition:universe}. Hereditary closure and conjugate closure are immediate. For additive closure, let $G,H \in \D(\mathcal{C})$. If $G$ and $H$ are both ends, then the conclusion follows from closure of~$\mathcal{C}$. Otherwise, we can assume by induction that all $G^L + H$, $G + H^L$, $G^R + H$, and $G + H^R$ are elements of $\D(\mathcal{C})$, and the conclusion follows from the definition of dicotic closure.
\end{proof}

\begin{definition}
If $\U = \D(\A)$, then we say that $\U$ is \textbf{generated by}~\A. We say that $\U$ is \textbf{finitely generated} if $\U = \D(\A)$ for some finite set~\A.
\end{definition}

\section{The Structure of Left Dead-Ends}
\label{section:left-ends}
\suppressfloats[t]

The starting point for any investigation of dead-ending universes is a study of the dead-ends themselves. We show here that they have an exceptionally rigid structure.

If $G$ and $H$ are Left dead-ends, then they are both Left $\U$-strong (for any~$\U$), so condition (c) in Theorem~\ref{theorem:lnsequiv} is satisfied automatically. Moreover, no Left end other than $0$ can be Right \U-strong, so the condition ``$H$~is Right \U-strong'' in (d) reduces to ``$H \cong 0$''. Since $G$ and $H$ have no Left options and no reversible moves, the entirety of Theorem~\ref{theorem:lnsequiv} reduces to the following simple criterion.

\begin{theorem}
\label{theorem:de-comparison}
Let $\U$ be a universe, and let $G$ and $H$ be Left dead-ends. Then $G \geq_\U H$ if and only if:
\begin{enumerate}
\item[(a)] For every $G^R$, there is some $H^R$ with $G^R \geq_\U H^R$; and
\item[(b)] If $G \cong 0$, then also $H \cong 0$.
\end{enumerate}
\end{theorem}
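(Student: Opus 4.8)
The plan is to derive Theorem~\ref{theorem:de-comparison} as a direct specialization of Theorem~\ref{theorem:lnsequiv}, using the structural peculiarities of Left dead-ends to collapse each of the four conditions (a)--(d) of the general theorem. First I would record the relevant facts about a Left dead-end $G$: it has no Left option, and every follower of $G$ is again a Left end (indeed a Left dead-end), so in particular $G$ has no reversible Left options (there are none) and condition (a) of Theorem~\ref{theorem:lnsequiv} — the one quantifying over $G^R$ — simplifies. Specifically, for a Left dead-end $G^R$, the clause ``there is some $G^{RL}$ with $G^{RL} \geq_\U H$'' is vacuous since $G^R$ has no Left option; hence condition (a) of Theorem~\ref{theorem:lnsequiv} reduces exactly to ``for every $G^R$ there is some $H^R$ with $G^R \geq_\U H^R$,'' which is clause~(a) of the statement to be proved. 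Note this uses that $H^R$ ranges over options of a Left end, which is fine.

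Next I would dispatch condition (b) of Theorem~\ref{theorem:lnsequiv}: since $H$ is a Left end, there are no $H^L$, so the universally quantified statement over $H^L$ is vacuously true. Then I would handle the proviso. Condition (c) of Theorem~\ref{theorem:lnsequiv} asks that if $H$ is a Left end then $G$ is Left $\U$-strong; but $H$ \emph{is} a Left end and $G$, being a Left dead-end, is itself a Left end, hence automatically Left $\U$-strong in every universe (as noted in the excerpt). So (c) holds trivially. The only remaining content is condition (d): if $G$ is a Right end then $H$ is Right $\U$-strong. Here is where the two clauses of the target theorem come from. A Left dead-end $G$ is a Right end precisely when it has no Right option either, i.e.\ $G \cong 0$ — indeed a nonempty Left dead-end always has at least one Right option (otherwise it would be the empty game $0$), so ``$G$ is a Right end'' is equivalent to ``$G \cong 0$''. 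And a Left end $H$ is Right $\U$-strong if and only if $H \cong 0$: one direction is trivial, and the other I would justify by citing the remark in the excerpt that no Left end other than $0$ can be Right $\U$-strong (which in turn follows from $\D \subseteq \U$, testing against $X = 0$, together with the fact that a nonzero Left end loses playing first against $0$ in misère, or more carefully against a suitable dicot). Thus (d) becomes exactly ``if $G \cong 0$ then $H \cong 0$,'' which is clause~(b).

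Assembling these reductions, Theorem~\ref{theorem:lnsequiv} for $G, H$ Left dead-ends collapses conjunctively to clauses (a) and (b) of Theorem~\ref{theorem:de-comparison}, giving both directions at once. The main obstacle — really the only non-bookkeeping point — is the precise justification that a nonzero Left end cannot be Right $\U$-strong, i.e.\ the ``only if'' direction of clause (b). I expect to handle this by exhibiting, for any nonzero Left end $H$, a Right end $X \in \D \subseteq \U$ (for instance $X = 0$, or a small dicot if needed) with $o(H + X) > \N$, i.e.\ a position Left wins moving second; concretely, $o(H) \geq \mathscr{P}$ fails — a nonzero Left end $H$, having only Right options, forces Left to... — so one checks that $H$ alone already witnesses $o(H) \not\leq \N$ unless $H \cong 0$, since in $H$ with Right to move, Right can move and eventually Right runs out first in misère. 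I would state this as a short lemma or simply invoke the cited fact from~\cite{Sie13}. Everything else is the routine matching-up of quantifiers described above, and no induction beyond what Theorem~\ref{theorem:lnsequiv} already supplies is needed.
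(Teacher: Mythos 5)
Your proposal is correct and is essentially the paper's own argument: the paper likewise obtains the theorem by specializing Theorem~\ref{theorem:lnsequiv} to Left dead-ends, noting that condition (b) and the $G^{RL}$ alternative in (a) are vacuous, that (c) holds since Left ends are automatically Left $\U$-strong, and that (d) collapses to ``$G \cong 0 \Rightarrow H \cong 0$'' because no nonzero Left dead-end is Right $\U$-strong. One small correction to your justification of that last fact: with the witness $X = 0$, a nonzero Left dead-end $H$ satisfies $o(H) = \sL$ because it is \emph{Left} who runs out of moves (immediately after Right's first move) and therefore wins under the mis\`ere convention --- not because ``Right runs out first'' --- which gives $o(H) \not\leq \N$ as required.
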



Notably, the condition (b) does not actually depend on~$\U$, so in fact Left dead-ends obey the same order-relations in any universe in which they appear: if $G \geq_\U H$ in \emph{any} universe~\U, then the same relation holds in \emph{every}~\U. We will say that comparison of Left dead-ends is an \textbf{absolute} relation, by which we mean that it is independent of~$\U$ (echoing terminology introduced by Larsson et~al). In relations involving Left dead-ends, we can therefore drop the subscript $\U$ and write simply $G \geq H$ or $G = H$ (\textbf{absolute equality}).


\begin{corollary}
For any Left dead-end $G \not\cong 0$, we have $G \not\geq 0$ and $0 \not\geq G$.
\end{corollary}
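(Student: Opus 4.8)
The plan is to apply Theorem~\ref{theorem:de-comparison} directly in both directions, since $0$ is itself a Left dead-end and the theorem characterizes comparison for all pairs of Left dead-ends. First I would dispatch $0 \not\geq G$: taking the roles $G \mapsto 0$ and $H \mapsto G$ in Theorem~\ref{theorem:de-comparison}, condition (b) demands that if $0 \cong 0$ then $G \cong 0$; since $0 \cong 0$ always holds, this forces $G \cong 0$, contradicting the hypothesis $G \not\cong 0$. Hence $0 \not\geq G$, and this half requires essentially no work beyond reading off condition (b).

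Next I would handle $G \not\geq 0$, which is the half that actually uses condition (a). Here the roles are $G \mapsto G$ and $H \mapsto 0$. Since $0$ is a Left end with no Right options, condition (a) of Theorem~\ref{theorem:de-comparison} asks: for every $G^R$, is there some $0^R$ with $G^R \geq 0^R$? But $0$ has no Right options, so this condition can be satisfied only if $G$ itself has no Right options. A Left dead-end with no Right options is a subposition-hereditary end on both sides, hence $G \cong 0$ — again contradicting $G \not\cong 0$. Therefore condition (a) fails and $G \not\geq 0$.

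The only point that needs a word of care — and the closest thing to an obstacle — is the claim that a Left dead-end with no Right option must be $\cong 0$. This is immediate from the definition of Left dead-end: every subposition of $G$ is a Left end, so $G$ has no Left options; if additionally $G$ has no Right options, then $G \cong \combgame{\{\cdot|\cdot\}} = 0$. So really there is no obstacle of substance; the corollary is a two-line consequence of Theorem~\ref{theorem:de-comparison} together with the trivial observation that the game with no options is $0$. I would write it up as: apply Theorem~\ref{theorem:de-comparison}(b) to rule out $0 \geq G$, and apply Theorem~\ref{theorem:de-comparison}(a) (noting $0$ has no Right option) to rule out $G \geq 0$, invoking in the latter case that a Left dead-end lacking a Right option is $0$.
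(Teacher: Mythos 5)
Your proof is correct and follows essentially the same route as the paper: rule out $G \geq 0$ via condition (a) of Theorem~\ref{theorem:de-comparison} (using that $0$ has no Right option while $G$, being a nonzero Left dead-end, has one) and rule out $0 \geq G$ via condition (b). The paper merely compresses your closing observation by stating at the outset that $G \not\cong 0$ forces $G$ to have at least one $G^R$.
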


\begin{proof}
Since $G \not\cong 0$, it must have at least one~$G^R$. Therefore $G \not\geq 0$ follows from Theorem~\ref{theorem:de-comparison}(a), and $0 \not\geq G$ follows from~(b).
\end{proof}

There is another simple, but important, consequence of Theorem~\ref{theorem:de-comparison}. Denote by $\fbirthday(G)$ the \textbf{formal birthday} (or \textbf{formal rank}) of~$G$, i.e., the height of the game tree for~$G$. Similarly, write $\birthday(G)$ for the \textbf{birthday} (or \textbf{rank}) of~$G$, defined as
\[
\birthday(G) = \min\{\fbirthday(H) : H = G\}.
\]
For general games $G$, the birthday $\birthday(G)$ depends on the universe under consideration, but for dead-ends it is absolute. 

\begin{proposition}
\label{prop:de-fbirthday}
Let $G$ and $H$ be Left dead-ends, and suppose that $G \geq H$. Then ${\fbirthday(G) \leq \fbirthday(H)}$.
\end{proposition}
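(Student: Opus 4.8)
The plan is to induct on $\fbirthday(G) + \fbirthday(H)$, using Theorem~\ref{theorem:de-comparison} as the structural engine. The base case is $G \cong 0$: then condition (b) of Theorem~\ref{theorem:de-comparison} forces $H \cong 0$ as well, so $\fbirthday(G) = \fbirthday(H) = 0$ and the inequality holds trivially. For the inductive step we may assume $G \not\cong 0$, so $G$ has at least one Right option; note this does \emph{not} immediately imply $H \not\cong 0$, so we must be slightly careful, but in fact if $H \cong 0$ then $\fbirthday(H) = 0 \leq \fbirthday(G)$ is what we want to contradict — so actually the interesting case is $H \not\cong 0$ too. Wait: we want $\fbirthday(G) \leq \fbirthday(H)$, so if $H \cong 0$ we would need $\fbirthday(G) = 0$, i.e.\ $G \cong 0$, which we have excluded; but condition (b) only runs the other direction. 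So the key observation must be that $G \geq H$ together with $G \not\cong 0$ actually \emph{cannot} hold when $H \cong 0$ — and indeed the Corollary immediately preceding the Proposition says exactly that $0 \not\geq G$ and $G \not\geq 0$ for any Left dead-end $G \not\cong 0$. So if $H \cong 0$ and $G \not\cong 0$, then $G \geq H$ is $G \geq 0$, which the Corollary forbids; hence whenever $G \geq H$ and $G \not\cong 0$, we must have $H \not\cong 0$ as well.

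With both $G, H \not\cong 0$, the main step is to bound $\fbirthday(G)$ by a Right option of $G$ plus one, and dominate that by $\fbirthday(H)$. Concretely: $\fbirthday(G) = 1 + \max_{G^R} \fbirthday(G^R)$, so pick a Right option $G^R$ achieving the maximum. By condition (a) of Theorem~\ref{theorem:de-comparison}, there is some $H^R$ with $G^R \geq H^R$. Both $G^R$ and $H^R$ are again Left dead-ends (every subposition of a Left dead-end is a Left dead-end), and $\fbirthday(G^R) + \fbirthday(H^R) < \fbirthday(G) + \fbirthday(H)$, so the induction hypothesis gives $\fbirthday(G^R) \leq \fbirthday(H^R)$. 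Then
\[
\fbirthday(G) = 1 + \fbirthday(G^R) \leq 1 + \fbirthday(H^R) \leq 1 + \max_{H^R} \fbirthday(H^R) = \fbirthday(H),
\]
where the last equality uses $H \not\cong 0$ so that $H$ has at least one Right option. This closes the induction.

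The only subtle point — and the step I would single out as the one needing care rather than difficulty — is ruling out the case $H \cong 0$, $G \not\cong 0$; this is handled cleanly by the Corollary, but it is worth stating explicitly since otherwise the inductive bookkeeping looks like it might fail. Everything else is a routine ``measure the game tree'' argument, and the absoluteness of $\geq$ for Left dead-ends (Theorem~\ref{theorem:de-comparison} and the remark following it) means we never need to track which universe we are in.
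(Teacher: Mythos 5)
Your proof is correct and is essentially the same argument as the paper's: induct, handle $G \cong 0$ via condition (b) of Theorem~\ref{theorem:de-comparison}, then take a maximal-rank $G^R$, match it to an $H^R$ via condition (a), and apply the induction hypothesis. The only cosmetic difference is your explicit detour through the Corollary to exclude $H \cong 0$; the paper gets this for free since condition (a) already supplies the needed $H^R$.
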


\begin{proof}
If $G \cong 0$, then also $H \cong 0$, so $\fbirthday(G) = \fbirthday(H) = 0$. Otherwise, let $G^R$ be any option of maximal formal rank, $\fbirthday(G^R) = \fbirthday(G) - 1$. Then there must be some $H^R$ with $G^R \geq H^R$. By induction, $\fbirthday(H^R) \geq \fbirthday(G) - 1$, whence $\fbirthday(H) \geq \fbirthday(G)$.
\end{proof}

\begin{proposition}
\label{prop:de-birthday}
Let $G$ and $H$ be Left dead-ends, and suppose that $G \geq H$. Then ${\birthday(G) \leq \birthday(H)}$.
\end{proposition}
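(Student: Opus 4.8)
The plan is to bootstrap from Proposition~\ref{prop:de-fbirthday}. The one extra ingredient needed is a reduction lemma: \emph{every Left dead-end $H$ is (absolutely) equal to a Left dead-end $H^*$ with $\fbirthday(H^*) = \birthday(H)$} --- that is, some representative of $H$ of minimal formal birthday is again a Left dead-end. Granting this, the Proposition is immediate: since $G \geq H = H^*$ with $G$ and $H^*$ both Left dead-ends, Proposition~\ref{prop:de-fbirthday} gives $\fbirthday(G) \leq \fbirthday(H^*) = \birthday(H)$, and therefore $\birthday(G) \leq \fbirthday(G) \leq \birthday(H)$.

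So the work is in the lemma, which I would prove by induction on $\fbirthday(H)$ (the case $H \cong 0$ being trivial). Fix a game $K$ with $K \equiv_\U H$ and $\fbirthday(K) = \birthday(H)$, where $\U$ denotes the ambient universe. Since $H$ is a Left end, condition~(b) of Theorem~\ref{theorem:lnsequiv} applied to $H \geq_\U K$ forces \emph{every} Left option $K^L$ to reverse through some $K^{LR}$ with $K^{LR} \leq_\U K$; likewise, any Right option of $K$ possessing a Left option that is $\geq_\U K$ is reversible. Bypassing these reversible options is the main move: each bypass replaces an option by options strictly deeper in the game tree of $K$, so the formal birthday never rises and the process terminates, producing a Left-end representative $K'$ of $H$ with $\fbirthday(K') = \birthday(H)$ and no reversible Right options. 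Now, for each Right option $(K')^R$: applying Theorem~\ref{theorem:lnsequiv}(a) to both $K' \geq_\U H$ and $H \geq_\U K'$ forces either that $(K')^R \geq_\U (K')^{R'}$ for a second Right option $(K')^{R'}$ --- so $(K')^R$ is dominated and can be deleted --- or else that $(K')^R \equiv_\U H^R$ for some Right option $H^R$ of $H$, in which case, since $\fbirthday(H^R) < \fbirthday(H)$, the induction hypothesis lets us replace $(K')^R$ by a Left dead-end of formal birthday $\birthday(H^R) \leq \fbirthday((K')^R)$. After these deletions and replacements we obtain a Left end all of whose Right options are Left dead-ends --- hence itself a Left dead-end --- of formal birthday at most, and therefore equal to, $\birthday(H)$. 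This is the desired $H^*$.

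The hard part is entirely the reversibility bookkeeping in the lemma: one must confirm that bypassing is legitimate modulo an arbitrary universe $\U$ (it is, by the general theory of~\cite{LNS}), that the iterated bypass terminates without ever increasing the formal birthday, and --- the subtlest point --- that once all reversibility has been stripped away, each surviving Right option really collapses onto a Right option of $H$, and thus inductively onto a Left dead-end, rather than persisting as some larger Left end. As an alternative, one can appeal to the misère canonical-form theory of~\cite{Sie13}: the $\mathcal{M}$-canonical form of a Left dead-end is again a Left dead-end (it is Left $\mathcal{M}$-strong, hence a Left end, and its Right options are canonical forms of Left dead-ends), but turning this into a proof of the Proposition still requires the fact that $\birthday(H)$ is the same whether computed modulo $\mathcal{M}$ or modulo $\U$ for dead-ends $H$ --- which is essentially equivalent to the lemma above.
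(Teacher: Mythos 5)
Your reduction of the Proposition to the auxiliary lemma is sound, and the lemma is in fact true, but your proof of the lemma breaks at its central step. You assert that the reversible options you identify can be bypassed ``by the general theory of~\cite{LNS}.'' That is precisely what fails in a general universe: bypassing preserves $\equiv_\U$ only for \emph{ordinary} reversibility (reversing through a position that is not a Left end); \emph{atomic} reversibility---reversing through a Left end---cannot in general be bypassed, and this failure is the entire motivation for Section~\ref{section:invariant-forms} of the paper (the standard example being $G = \combgame{\{0,\cgstar|\cgstar\}} \geq_\D 0$, whose Left option $\cgstar$ reverses through the Left end $0$ yet cannot be removed, since $\combgame{\{0|\cgstar\}} \not\equiv_\D G$). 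Your stripping process cannot avoid atomic bypasses: to turn $K$ into a Left end you must at some point---certainly when bypassing the last surviving Left option---reverse through a $K^{LR}$ that is itself a Left end, and exactly those steps have no equivalence-preserving justification. So the claim that the terminal Left end $K'$ is still a form of $H$ is unsupported. The lemma can be rescued, but essentially only with the later machinery of Section~\ref{section:invariant-forms}: the reduction steps there never increase formal birthday, the \U-simplest form in $\Mhat$ is unique, and the \U-simplest form of a Left dead-end is again a Left dead-end (a Left dead-end has no Left options anywhere, hence no reversible options at all, so only dominated options are removed). None of that is available, or ``immediate,'' at this point of the paper; your alternative sketch via $\mathcal{M}$-canonical forms runs into the same circularity, as you note yourself.

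For comparison, the paper's one-line proof works because it reads $\birthday(H)$ for a dead-end as the least formal birthday among (absolutely) equal Left dead-ends---the only setting in which the unsubscripted relation $=$ has been given meaning: choosing minimal dead-end forms $G'=G$ and $H'=H$ and applying Proposition~\ref{prop:de-fbirthday} to $G' \geq H'$ gives $\birthday(G) = \fbirthday(G') \leq \fbirthday(H') = \birthday(H)$ at once. Your broader reading, minimizing over all forms equivalent modulo an ambient universe, is a genuinely stronger statement, and for it the missing ingredient is exactly your lemma---whose proof, as written, does not go through.
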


\begin{proof}
Follows immediately from Proposition~\ref{prop:de-fbirthday}.
\end{proof}

\subsection*{Absolute Simplest Forms}

Theorem~\ref{theorem:de-comparison} leads immediately to an easy simplest form theorem for dead ends, and since equality for dead ends is an absolute relation, their simplest forms are absolute invariants.

\begin{lemma}
\label{lemma:de-subset}
Let $G$ and $H$ be Left dead-ends, and suppose that the Right options of $G$ are a nonempty subset of those of~$H$. Then $G \geq H$.
\end{lemma}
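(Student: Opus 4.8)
The plan is to apply Theorem~\ref{theorem:de-comparison} directly. We need to verify the two conditions (a) and (b) for the relation $G \geq H$, where $G$ and $H$ are Left dead-ends whose Right option sets satisfy $\{G^R\} \subseteq \{H^R\}$ with $\{G^R\}$ nonempty.

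First I would check condition (a): for every $G^R$, there must be some $H^R$ with $G^R \geq H^R$. Given any Right option $G^R$ of $G$, since the Right options of $G$ are a subset of those of $H$, the game $G^R$ is itself a Right option of $H$. So we may take $H^R \cong G^R$, and then $G^R \geq H^R$ holds by reflexivity of absolute comparison (which follows from Theorem~\ref{theorem:de-comparison}, or simply because $\geq_\U$ is a preorder). Then I would check condition (b): if $G \cong 0$, then also $H \cong 0$. But $G \cong 0$ would mean $G$ has no Right options, contradicting the hypothesis that the set of Right options of $G$ is nonempty. So condition (b) is satisfied vacuously.

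Since both conditions of Theorem~\ref{theorem:de-comparison} hold, we conclude $G \geq H$, and there is essentially no obstacle — the lemma is an immediate corollary of the comparison criterion, with the nonemptiness hypothesis precisely there to dispose of the proviso condition (b). The one point worth stating explicitly is that reflexivity of $\geq$ on Left dead-ends is available (either as an instance of Theorem~\ref{theorem:de-comparison} applied to $G^R \geq G^R$, proved by induction on formal birthday, or from the general fact that $\geq_\U$ is reflexive), which is what licenses the choice $H^R \cong G^R$ in verifying condition~(a).
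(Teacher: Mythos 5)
Your proof is correct and matches the paper's argument: both verify condition (a) of Theorem~\ref{theorem:de-comparison} by pairing each $G^R$ with itself as a Right option of $H$ (using reflexivity, which the paper leaves implicit), and both note that (b) is vacuous since $G \not\cong 0$. No issues.
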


\begin{proof}
Condition (a) in Theorem~\ref{theorem:de-comparison} follows immediately from the hypotheses, and condition (b) is vacuous, since $G \not\cong 0$.
\end{proof}

\begin{lemma}
Let $G$ be a Left dead-end, and suppose that $G'$ is obtained from $G$ by eliminating a dominated option $G^{R_1}$ from~$G$. Then $G = G'$.
\end{lemma}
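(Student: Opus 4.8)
The plan is to verify the two inequalities $G \geq G'$ and $G' \geq G$ separately, combining Theorem~\ref{theorem:de-comparison} with Lemma~\ref{lemma:de-subset}. First I would dispose of the bookkeeping. Since $G^{R_1}$ is a \emph{dominated} Right option, there is a Right option $G^{R_2}$ of $G$, distinct from the eliminated copy, with $G^{R_2} \leq G^{R_1}$ (Right prefers smaller positions). In particular $G$ has at least two Right options, so neither $G$ nor $G'$ is $\cong 0$; moreover $G'$ is again a Left dead-end (its followers are followers of $G$, and it has no Left option), and its Right options are exactly those of $G$ with one copy of $G^{R_1}$ deleted, a nonempty set that contains $G^{R_2}$.

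For $G' \geq G$: the Right options of $G'$ form a nonempty subset of those of $G$, so this is immediate from Lemma~\ref{lemma:de-subset}.

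For $G \geq G'$: I would check conditions (a) and (b) of Theorem~\ref{theorem:de-comparison}. Condition (b) is vacuous, since $G \not\cong 0$. For (a), fix a Right option $G^R$ of $G$. If $G^R$ is not the deleted copy of $G^{R_1}$, then $G^R$ is itself a Right option of $G'$ and $G^R \geq G^R$ trivially; and if $G^R$ is the deleted $G^{R_1}$, then $G^{R_2}$ is a surviving Right option of $G'$ with $G^{R_1} \geq G^{R_2}$ by the choice of $G^{R_2}$. Either way (a) is satisfied, so $G \geq G'$. Combining the two inequalities gives $G = G'$, and this is an absolute equality by the remark following Theorem~\ref{theorem:de-comparison}.

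I do not expect any genuine obstacle: the whole argument is a direct unwinding of Theorem~\ref{theorem:de-comparison}, and the only mildly delicate points are (i) observing that the mere existence of a dominating option $G^{R_2}$ forces $G \not\cong 0$, which renders the proviso condition (b) vacuous in both directions, and (ii) being careful about the convention for domination of a Right option and about distinguishing ``delete one copy of $G^{R_1}$'' from ``delete all copies'' when the option set has repetitions.
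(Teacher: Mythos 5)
Your proof is correct and is essentially the paper's own argument: one inequality comes from Lemma~\ref{lemma:de-subset} (the Right options of $G'$ are a nonempty subset of those of $G$), the other from checking condition (a) of Theorem~\ref{theorem:de-comparison} using the dominating option $G^{R_2}$, with condition (b) vacuous since $G \not\cong 0$. If anything, you are a bit more explicit than the paper about which direction follows from the subset lemma and which needs the domination, and about why domination forces $G \not\cong 0$.
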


\begin{proof}
$G \geq G'$ by Lemma~\ref{lemma:de-subset}. To see that $G' \geq G$, we verify condition (a) in Theorem~\ref{theorem:de-comparison}: $G^{R_1}$ is dominated by some $G^{R_2}$, and every other option of $G$ is paired in~$G'$. Condition (b) is vacuous, as in Lemma~\ref{lemma:de-subset}.
\end{proof}

\begin{theorem}
\label{theorem:deadendcanonical}
Suppose that $G$ and $H$ are Left dead-ends, and assume no subposition of $G$ nor $H$ has any dominated options. If $G = H$, then in fact $G \cong H$.
\end{theorem}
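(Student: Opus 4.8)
The plan is to prove this by induction on $\fbirthday(G) + \fbirthday(H)$, establishing a bijective correspondence between the Right options of $G$ and those of $H$ under which corresponding options are absolutely equal. The base case is $G \cong 0$: by Theorem~\ref{theorem:de-comparison}(b) applied in both directions, $G = H$ forces $H \cong 0$ as well, so $G \cong H$.

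For the inductive step, assume $G \not\cong 0$ (and hence $H \not\cong 0$), and suppose $G = H$. First I would invoke Theorem~\ref{theorem:de-comparison}(a) twice: since $G \geq H$, every $G^R$ satisfies $G^R \geq H^{R'}$ for some $H^{R'}$; and since $H \geq G$, that $H^{R'}$ satisfies $H^{R'} \geq G^{R''}$ for some $G^{R''}$. By transitivity $G^R \geq G^{R''}$. Now I would argue that $G^R \cong G^{R''}$: since $G$ has no dominated options by hypothesis, and $G^R \geq G^{R''}$ with both options of $G$, the only way to avoid $G^{R''}$ being dominated by $G^R$ is if they are the same option. (Here one uses that the Right options of a Left dead-end are themselves Left dead-ends, so all the comparison machinery applies hereditarily, and that ``dominated'' for Right options of $G$ means exactly ``$\leq$ some other Right option of $G$.'') Hence $G^R \geq H^{R'} \geq G^R$, so $G^R = H^{R'}$ by antisymmetry of the absolute order. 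Running the symmetric argument starting from an arbitrary $H^R$ produces, in the same way, a $G^{R'}$ with $H^R = G^{R'}$.

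This shows that the map sending each $G^R$ to a chosen $H^{R'}$ with $G^R = H^{R'}$ is well-defined on the dominance-free option sets, and likewise in the reverse direction; moreover, because no subposition of $G$ or $H$ has dominated options, the absolutely-equal partners are unique (two distinct Right options of $H$ that were both absolutely equal to the same $G^R$ would be absolutely equal to each other, contradicting the absence of dominated options in $H$). So we obtain a bijection between the Right options of $G$ and the Right options of $H$ pairing each $G^R$ with an $H^R$ satisfying $G^R = H^R$. By the induction hypothesis (each such pair consists of Left dead-ends with no dominated options in any subposition, and $\fbirthday(G^R) + \fbirthday(H^R) < \fbirthday(G) + \fbirthday(H)$), we get $G^R \cong H^R$ for each matched pair. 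Therefore the game trees of $G$ and $H$ agree option-by-option, i.e.\ $G \cong H$.

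The main obstacle is the step showing $G^R \cong G^{R''}$ from $G^R \geq G^{R''}$ using the hypothesis that $G$ has no dominated options: one must be careful that the relation ``$G^R \geq G^{R''}$ with $G^{R''}$ also a Right option of $G$'' is precisely the situation that being dominated-free excludes (unless $G^R$ and $G^{R''}$ are literally the same option), and that antisymmetry of $\geq$ on Left dead-ends — which follows from Proposition~\ref{prop:de-fbirthday} together with Theorem~\ref{theorem:de-comparison} — is what upgrades mutual inequality to absolute equality. Everything else is a routine unwinding of Theorem~\ref{theorem:de-comparison} and bookkeeping of the induction.
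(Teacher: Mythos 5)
Your proposal is correct and follows essentially the same route as the paper's proof: pair each $G^R$ with an $H^R$ via Theorem~\ref{theorem:de-comparison}, close the chain $G^R \geq H^R \geq G^{R'}$ back into the Right options of $G$, use the no-dominated-options hypothesis to collapse the chain, and conclude $G^R \cong H^R$ by induction (your explicit bijection and base case are just extra bookkeeping). One cosmetic slip: in the comparison $G^R \geq G^{R''}$ of two Right options it is $G^R$, the larger one, that would be the dominated option rather than $G^{R''}$, but either way distinctness would contradict the hypothesis, so the argument stands.
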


The proof is essentially identical to many others in the literature, but for completeness, we give it here.

\begin{proof}
Fix any $G^R$. Then since $G \geq H$, there is an $H^R$ with $G^R \geq H^R$; and since $H \geq G$, there is in turn a $G^{R'}$ with $H^R \geq G^{R'}$. Therefore
\[G^R \geq H^R \geq G^{R'},\]
and since $G$ has no dominated options, they must all be equal. This shows that $G^R = H^R$, whence by induction $G^R \cong H^R$. An identical argument shows that for each $H^R$, there is a $G^R$ with $H^R \cong G^R$, establishing the theorem.
\end{proof}

Theorem~\ref{theorem:deadendcanonical} has some remarkable consequences.

\begin{proposition}
If $G,H \in \mathcal{L}$, then $\birthday(G+H) = \birthday(G) + \birthday(H)$.
\end{proposition}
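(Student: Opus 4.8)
The plan is to prove the two inequalities $\birthday(G+H) \leq \birthday(G)+\birthday(H)$ and $\birthday(G+H) \geq \birthday(G)+\birthday(H)$ separately, only the second requiring real work. First I would reduce to the case that $G$ and $H$ are in canonical form: their canonical forms exist and are unique by Theorem~\ref{theorem:deadendcanonical} together with the preceding dominance lemma, and passing to them changes neither $\birthday(G)$, $\birthday(H)$, nor the equivalence class of $G+H$; thus $\birthday(G)=\fbirthday(G)$ and $\birthday(H)=\fbirthday(H)$. I would also record, as general facts, that $\fbirthday(A+B)\leq\fbirthday(A)+\fbirthday(B)$ for all games (a one-line induction on game trees) and that $\mathcal{L}$ is closed under addition (already shown in the proof of Proposition~\ref{proposition:nonewends}), so that $G+H$ and all of its followers are again Left dead-ends, and Theorem~\ref{theorem:de-comparison} applies to them. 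The upper bound is then immediate: $\birthday(G+H)\leq\fbirthday(G+H)\leq\fbirthday(G)+\fbirthday(H)=\birthday(G)+\birthday(H)$.

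For the lower bound I would argue by induction on $b:=\birthday(G)+\birthday(H)$, proving the sharper statement that \emph{every} Left dead-end $L$ with $L=G+H$ satisfies $\fbirthday(L)\geq b$, which gives $\birthday(G+H)\geq b$ by definition of $\birthday$. When $\birthday(G)=0$ we have $G\cong 0$, so $L=H$ and $\fbirthday(L)\geq\birthday(L)=\birthday(H)=b$; the case $\birthday(H)=0$ is symmetric. So suppose $\birthday(G),\birthday(H)\geq 1$ and fix $L$ with $L=G+H$. Since $G$ is canonical with $\birthday(G)\geq 1$, it has a Right option $G^{R_0}$ with $\fbirthday(G^{R_0})=\birthday(G)-1$; being a subposition of a canonical dead-end, $G^{R_0}$ is itself canonical, so $\birthday(G^{R_0})=\birthday(G)-1$. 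Now $G^{R_0}+H$ is a Right option of $G+H$, and from $G+H\geq L$ and Theorem~\ref{theorem:de-comparison}(a) (applied to the two Left dead-ends $G+H$ and $L$) there is a Right option $L^R$ of $L$ with $G^{R_0}+H\geq L^R$. By the induction hypothesis applied to the pair $(G^{R_0},H)$, whose total birthday is $b-1<b$, every game equal to $G^{R_0}+H$ has formal birthday at least $b-1$, so $\birthday(G^{R_0}+H)\geq b-1$; hence $\birthday(L^R)\geq\birthday(G^{R_0}+H)\geq b-1$ by Proposition~\ref{prop:de-birthday}, and therefore $\fbirthday(L)\geq 1+\fbirthday(L^R)\geq 1+\birthday(L^R)\geq b$. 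This completes the induction, and combined with the upper bound establishes the identity.

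The substance of the argument is entirely in the lower bound, and the step I expect to be the crux is the choice of the deepest Right option $G^{R_0}$ of the canonical form of $G$: this produces an explicit Right option $G^{R_0}+H$ of $G+H$ whose birthday is forced to equal $b-1$ by the inductive hypothesis (using that $\birthday$ behaves as desired on the strictly smaller pair $(G^{R_0},H)$), after which comparing $G+H$ with an arbitrary $L=G+H$ propagates this depth into $L$. The mild subtleties to check are that one may legitimately apply Theorem~\ref{theorem:de-comparison} to $G+H$ and its equivalents — which is available precisely because equality of Left dead-ends is absolute and both games in sight are dead-ends — and that canonical forms of dead-ends exist at all, which is exactly the content of Theorem~\ref{theorem:deadendcanonical} and the dominance lemmas preceding it.
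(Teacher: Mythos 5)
Your argument is correct, but it takes a considerably longer route than the paper's, and indeed longer than your own ingredients require. The paper's proof simply takes $G$, $H$ in simplest form, notes $\fbirthday(G+H)=\birthday(G)+\birthday(H)$, and observes that the simplest form $K$ of $G+H$ is obtained by eliminating dominated options only (dead-ends admit no reversible options), which by Proposition~\ref{prop:de-fbirthday} never changes the formal birthday; hence $\fbirthday(K)=\birthday(G)+\birthday(H)$. Your upper bound matches this, but your lower bound replaces the direct appeal to Proposition~\ref{prop:de-fbirthday} with an induction on $\birthday(G)+\birthday(H)$ that, via Theorem~\ref{theorem:de-comparison}(a) and Proposition~\ref{prop:de-birthday}, essentially re-derives the one instance of that proposition you need: if $L=G+H$ (with $G$, $H$ canonical), then $G+H\geq L$, so Proposition~\ref{prop:de-fbirthday} already gives $\fbirthday(L)\geq\fbirthday(G+H)=\birthday(G)+\birthday(H)$ in a single line, with no induction and no need to extract a deepest Right option $G^{R_0}$. (Put differently, Proposition~\ref{prop:de-fbirthday} shows all dead-end forms of a given value have the same formal rank, so $\birthday=\fbirthday$ on $\mathcal{L}$ and additivity is immediate.) The one step you leave slightly implicit is the assertion that a canonical form attains the birthday, i.e.\ $\fbirthday(G)=\birthday(G)$ for canonical $G$; your ``thus'' needs exactly Proposition~\ref{prop:de-fbirthday} again (compare the canonical form with a minimal-rank form), so it is harmless, but it is the same fact your induction is implicitly reconstructing. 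What your version buys is a self-contained derivation from the comparison criterion alone; what it costs is length and the duplicated content of Propositions \ref{prop:de-fbirthday} and~\ref{prop:de-birthday}.
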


\begin{proof}
Assume $G$ and $H$ are in simplest form, and let $K$ be the simplest form of $G + H$. Then $\fbirthday(G+H) = \birthday(G) + \birthday(H)$. Now $K$ is obtained from $G + H$ by eliminating dominated options from subpositions of $G + H$. But Proposition~\ref{prop:de-fbirthday} shows that eliminating dominated options from $G + H$ never changes its formal birthday, so in fact $\fbirthday(K) = \birthday(G) + \birthday(H)$.
\end{proof}

\begin{proposition}
\label{prop:nonterminable-sums}
If $G,H \in \mathcal{L}$ with $G \neq 0$ and $H \neq 0$, then $0$ is not an option of (any form of) $G + H$.
\end{proposition}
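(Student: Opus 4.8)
The plan is to proceed by induction on $\fbirthday(G) + \fbirthday(H)$, working with $G$ and $H$ in simplest form (which we may do, since by Proposition~\ref{prop:de-fbirthday} eliminating dominated options does not change formal birthdays, so a form of $G+H$ containing $0$ as an option would yield, after elimination of dominated options, a simplest form with the same property if $0$ survived — but we must be a little careful here, so the cleaner route is the direct inductive one below). Since $G \neq 0$ and $H \neq 0$, both have at least one Right option, so $G+H$ has Right options of the form $G^R + H$ and $G + H^R$; thus $G + H \not\cong 0$ at the formal level. The real content is that no \emph{form} of $G+H$ has $0$ as an option, i.e.\ that $G+H = 0$ is impossible for any Left dead-end summand decomposition — equivalently, $G + H$ is never equal to a Left dead-end one of whose Right options is~$0$.

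First I would record that if $K$ is any Left dead-end with $0$ as a Right option, then $K \not\geq J$ for any Left dead-end $J \neq 0$ unless $J$ also has $0$ as a Right option: indeed by Theorem~\ref{theorem:de-comparison}(a) applied to the option $0$ of $K$ we would need some $J^R$ with $0 \geq J^R$, and by the Corollary following Theorem~\ref{theorem:de-comparison} this forces $J^R \cong 0$. So the set of Left dead-ends having $0$ as an option is downward closed under $\geq$ among nonzero dead-ends. Next, I would show directly that $G + H$ (with $G, H \neq 0$) has \emph{no} Right option equal to~$0$: a Right option is $G^R + H$ or $G + H^R$; by the inductive hypothesis, since $H \neq 0$, the sum $G^R + H$ equals $0$ only if $G^R = 0$ and $H = 0$ — but $H \neq 0$ — unless $G^R = 0$, in which case $G^R + H = H \neq 0$; symmetrically for $G + H^R$. (The base case $G^R = 0$ or $H^R = 0$ is where we use $G^R + H = H \neq 0$ and $G + H^R = G \neq 0$ outright, not the inductive hypothesis.) Hence no Right option of the canonical form $G + H$ is equal to $0$.

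It then remains to argue that passing to the simplest form cannot \emph{introduce} $0$ as an option. This follows from Theorem~\ref{theorem:deadendcanonical} together with the observation of the previous paragraph: the simplest form $K$ of $G+H$ is obtained purely by removing dominated options from subpositions, never adding options, so every Right option of $K$ is (absolutely equal to, hence by Theorem~\ref{theorem:deadendcanonical} isomorphic to the simplest form of) some Right option of $G+H$; since none of the latter equals $0$, and $0$ is its own simplest form, none of the former is $0$ either. Combining, $0$ is an option of no form of $G+H$.

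The main obstacle I anticipate is organizing the induction cleanly so that the base cases ($G^R = 0$ or $H^R = 0$) and the "no new options are created by simplification" step are handled without circularity — in particular, being precise that "form of $G+H$" means any game obtained by choosing particular (possibly non-simplest) representatives of summands and/or deleting dominated options, and that in all such forms every Right option is still of the shape (form of $G')+H'$ or $G'+(\text{form of }H')$ with $G' = G$, $H' = H$ Left dead-ends, so the inductive hypothesis continues to apply. Once that bookkeeping is in place the argument is routine.
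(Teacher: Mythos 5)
Your core computations line up with the paper's proof: the formal Right options of $G+H$ are $G^R+H$ and $G+H^R$, each of which is a formally nonzero Left dead-end (since $H \not\cong 0$, resp.\ $G \not\cong 0$), hence unequal to $0$; and the simplest form of $G+H$, obtained from this by deleting dominated options only, therefore has no option equal to~$0$. But one step is misattributed: your appeal to ``the inductive hypothesis'' to conclude $G^R+H \neq 0$ does not work as stated, because the statement you are inducting on (``$0$ is not an option of any form of the sum'') says nothing about the \emph{value} of the sum. No induction is needed at all: a formally nonzero Left dead-end is incomparable with $0$ (the Corollary following Theorem~\ref{theorem:de-comparison}), hence $\neq 0$; this is exactly the paper's one-line argument.

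More substantively, your concluding ``Combining, $0$ is an option of no form of $G+H$'' is not yet established. Your final paragraph restricts ``form of $G+H$'' to games obtained by choosing representatives of the summands and/or deleting dominated options, but a form of $G+H$ is any game equal to $G+H$ in the absolute sense, and such a game need not have that shape (for instance, one may add dominated options at subpositions, or use a tree that is not a sum at all). The missing step is precisely where the paper invokes ``$0$ is incomparable with every nonzero game'': a $0$ option can never be dominated, so it would survive reduction to the unique simplest form (Theorem~\ref{theorem:deadendcanonical}), which you have shown has no such option. Notably, the lemma you prove in your first paragraph and then never use---if $K \geq J$ with $J \neq 0$ and $0$ is an option of $K$, then $0$ is an option of $J$---closes the gap in one line: any form $K$ of $G+H$ satisfies $K \geq G+H$ and $G+H \neq 0$, so if $K$ had $0$ as an option, the formal sum $G+H$ would have an option $\cong 0$, contradicting what you showed. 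With that connection made explicit (or with the domination argument above), your proof is complete and is essentially the paper's.
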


\begin{proof}
A typical option of $G + H$ has the form $G^L + H$. Since $H \not\cong 0$, we have $G^L + H \not\cong 0$, and hence $G^L + H \neq 0$. By Theorem~\ref{theorem:deadendcanonical}, the simplest form of $G + H$ does not have $0$ as an option, and since $0$ is incomparable with every nonzero game, it follows that no form can.
\end{proof}


\subsection*{The Semilattice of Left Dead-Ends}

Now denote by $\mathcal{L}_n$ the set of (values of) Left dead-ends of birthday~$\leq n$. Viewed as a partial order, $\mathcal{L}_n$ (for $n \geq 1$) has two disconnected components: the singleton $\{0\}$ and its remainder, which we denote by $\mathcal{L}_n^\times = \mathcal{L}_n \setminus \{0\}$.

The partial order $\mathcal{L}_n^\times$ is easily seen to be a meet semilattice, with meet operation given by set union of Right options. Its maximal elements are the negative integers $\bar1$, $\bar2$, $\ldots$,~$\bar n$, and its unique least element is the \textbf{complete tree} $K_n = \combgame{\{\cdot|\mathcal{L}_{n-1}\}}$. The simplest form of $K_n$ is given recursively by
\[
M_0 = 0; \qquad M_{n+1} = \combgame{\{\cdot|0,M_n\}}.
\]
The games $M_n$ were first described in~\cite{LMNRS}, where they are called \textbf{perfect murders}.

The structures $\mathcal{L}_2^\times$ and $\mathcal{L}_3^\times$ are given in Figure~\ref{figure:dead-end-semilattices}. In Figure~\ref{figure:dead-end-semilattices} and throughout the sequel, we use a convenient abbreviated notation for elements of~$\mathcal{L}$. If $G,H,J \in \mathcal{L}$, then $GHJ$ denotes the game $\combgame{\{\cdot|G,H,J\}}$, and $G_\sh$ (pronounced ``$G$~sharp'') denotes the game $\combgame{\{\cdot|G\}}$.

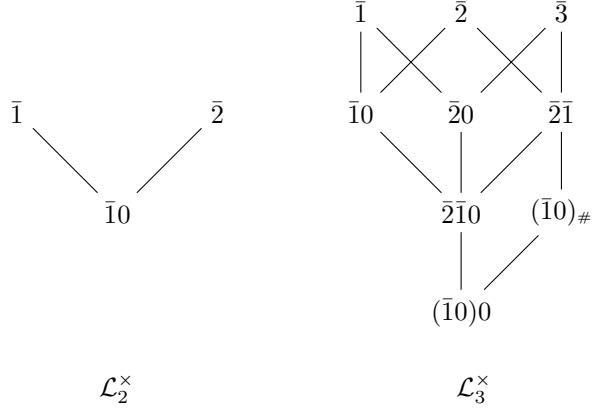
\begin{figure}
\begin{center}
\newcommand\tikzscale{1.3333cm}
\begin{tabular}{c@{\qquad\qquad}c}
\begin{tikzpicture}[x=\tikzscale,y=\tikzscale]
\node (10) at (0, 0) {$\bar{1}0$};
\node (1) at (-1, 1) {$\bar{1}$};
\node (2) at (1, 1) {$\bar{2}$};
\draw [-] (1) -- (10) -- (2);
\node (phantom) at (0, -1) {\color{white}$(\bar{1})$};
\end{tikzpicture} &
\begin{tikzpicture}[x=\tikzscale,y=\tikzscale]
\node (100) at (0, 0) {$(\bar{1}0)0$};
\node (210) at (0, 1) {$\bar{2}\bar{1}0$};
\node (10s) at (1, 1) {$(\bar{1}0)_\sh$};
\node (10) at (-1, 2) {$\bar{1}0$};
\node (20) at (0, 2) {$\bar{2}0$};
\node (21) at (1, 2) {$\bar{2}\bar{1}$};
\node (1) at (-1, 3) {$\bar{1}$};
\node (2) at (0, 3) {$\bar{2}$};
\node (3) at (1, 3) {$\bar{3}$};
\draw [-] (10) -- (1) -- (20) -- (3) -- (21) -- (2) -- (10) -- (210) -- (100) -- (10s) -- (21) -- (210) -- (20);
\end{tikzpicture}
\vspace{0.15in} \\
$\mathcal{L}_2^\times$ & $\mathcal{L}_3^\times$
\end{tabular}
\end{center}
\caption{The semilattices $\mathcal{L}_2^\times$ and $\mathcal{L}_3^\times$.\label{figure:dead-end-semilattices}}
\end{figure}

The cardinalities of $\mathcal{L}_n$ are easily computed up to $n = 5$.

\begin{center}
\begin{tabular}{c|c|c|c|c|c|c}
$n$ & 0 & 1 & 2 & 3 & 4 & 5 \bigstrut \\ \hline
$|\mathcal{L}_n|$ & 1 & 2 & 4 & 10 & 52 & 21278 \bigstrut
\end{tabular}
\end{center}

There is a remarkably simple recursive characterization of~$\mathcal{L}_n$. The partial order $\mathcal{L}_{n+1}$ consists of all antichains of~$\mathcal{L}_n$, with nonempty antichains ordered by set inclusion and the empty antichain disconnected. This shows in particular that $|\mathcal{L}_n|$ is always even for $n \geq 1$, since there is a one-to-one pairing between antichains with and without~$0$.

Moreover, we have the following unsurprising theorem.

\begin{theorem}
$\mathcal{L}_n^\times$ is a distributive meet semilattice.
\end{theorem}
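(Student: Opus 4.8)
The plan is to exhibit $\mathcal{L}_n^\times$ explicitly as the lattice of order ideals of $\mathcal{L}_{n-1}$ with its top element removed, and then appeal to the standard fact that lattices of order ideals are distributive. We may assume $n \geq 1$; the cases $n \leq 1$ are trivial, since $\mathcal{L}_0^\times$ is empty and $\mathcal{L}_1^\times = \{\bar1\}$ is a single point. Throughout, regard $\mathcal{L}_{n-1}$ as a finite poset under the absolute order of Section~\ref{section:left-ends}, write ${\uparrow}S$ for the up-closure of a set $S \subseteq \mathcal{L}_{n-1}$, and write $\mathcal{O}(\mathcal{L}_{n-1})$ for the lattice of order ideals (down-sets) of $\mathcal{L}_{n-1}$, ordered by inclusion.

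First I would set up the correspondence, refining the recursive description given above. For $G \in \mathcal{L}_n^\times$ in simplest form, the Right options of $G$ are values of birthday $\leq n-1$, hence lie in $\mathcal{L}_{n-1}$, and — having no dominated options — they form an antichain $R(G)$; conversely, every nonempty antichain of $\mathcal{L}_{n-1}$ arises this way. Put $\psi(G) = \mathcal{L}_{n-1} \setminus {\uparrow}R(G) \in \mathcal{O}(\mathcal{L}_{n-1})$. Applying Theorem~\ref{theorem:de-comparison} to $G, H \in \mathcal{L}_n^\times$ — whose clause (b) is vacuous, since no member of $\mathcal{L}_n^\times$ is $0$ — one checks that $G \geq H$ holds precisely when every Right option of $G$ lies above some Right option of $H$, i.e. precisely when ${\uparrow}R(G) \subseteq {\uparrow}R(H)$ (using that up-closure is monotone and idempotent), i.e. precisely when $\psi(G) \supseteq \psi(H)$. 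Thus $\psi$ is an injective, order-preserving map from $(\mathcal{L}_n^\times, \geq)$ to $(\mathcal{O}(\mathcal{L}_{n-1}), \supseteq)$. Moreover, since ${\uparrow}\bigl(R(G) \cup R(H)\bigr) = {\uparrow}R(G) \cup {\uparrow}R(H)$, the map $\psi$ carries the semilattice meet of $\mathcal{L}_n^\times$ — union of Right options, with dominated options subsequently deleted — to intersection of order ideals. Finally, as $G$ ranges over $\mathcal{L}_n^\times$ the set ${\uparrow}R(G)$ ranges over all nonempty up-sets of $\mathcal{L}_{n-1}$, so $\psi(G)$ ranges over all order ideals of $\mathcal{L}_{n-1}$ other than $\mathcal{L}_{n-1}$ itself. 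Hence $\psi$ is an isomorphism of meet-semilattices from $\mathcal{L}_n^\times$ onto $\mathcal{O}(\mathcal{L}_{n-1})$ with its greatest element deleted.

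To conclude, note that $\mathcal{O}(\mathcal{L}_{n-1})$ is a sublattice of the Boolean lattice $2^{\mathcal{L}_{n-1}}$ — it is closed under union and intersection — and is therefore distributive. Consequently $\mathcal{L}_n^\times$ becomes a distributive lattice when a greatest element is adjoined, which is the sense in which it is a distributive meet-semilattice; in particular every closed interval of $\mathcal{L}_n^\times$ is a distributive lattice. The only point requiring care is the bookkeeping of the middle paragraph: one must verify via Theorem~\ref{theorem:de-comparison} that the ordering of simplest forms really is up-set inclusion of Right-option sets — with the orders pointing in the correct directions — and must track the disconnected value $0$, equivalently the empty antichain, so that what emerges is a distributive lattice with exactly one point (its top) removed rather than something slightly different. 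As the surrounding discussion indicates, none of this is deep, and the theorem is indeed unsurprising.
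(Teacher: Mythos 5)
Your proof is correct and is essentially the paper's own argument: the paper likewise adjoins a top element $\triangledown$ and maps each $G \in \mathcal{L}_n^\times$ to the up-closure $[G]$ of its Right options in $\mathcal{L}_{n-1}$, using Theorem~\ref{theorem:de-comparison} and Theorem~\ref{theorem:deadendcanonical} to see this is an isomorphism onto a lattice of sets (with $\wedge \mapsto \cup$, $\vee \mapsto \cap$). Your version differs only by passing to the complementary down-sets, i.e.\ it is the same construction up to complementation, with the distributivity of $\mathcal{O}(\mathcal{L}_{n-1})$ playing the role of the paper's ``lattice of sets.''
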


One can define a ``distributive semilattice'' directly in a variety of ways, but we will simply prove the following, stronger theorem. Define the extension $\mathcal{L}_n^* = \mathcal{L}_n^\times \cup \{\triangledown\}$, where $\triangledown$ is a new symbol with $\triangledown \geq G$ for all $G \in \mathcal{L}_n^\times$. (Effectively, we are putting $0$ ``back into'' the partial order in its natural place.) Then:

\begin{theorem}
$\mathcal{L}_n^*$ is a distributive lattice.
\end{theorem}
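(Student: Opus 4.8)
The plan is to establish distributivity of the lattice $\mathcal{L}_n^*$ by identifying it concretely as a sublattice of a power set lattice, which is the standard route to distributivity. The key observation from Theorem~\ref{theorem:de-comparison} is that, for nonzero Left dead-ends, $G \geq H$ iff the Right options of $G$ are a subset of those of $H$ (after passing to simplest forms, using Lemma~\ref{lemma:de-subset} and the domination lemma); so the order on $\mathcal{L}_n^\times$ is \emph{reverse} set inclusion of the option sets $\sR(G) \subset \mathcal{L}_{n-1}$, and the meet is set union of option sets. Dually, $\triangledown$ corresponds to the empty set. Thus I would first set up the map $G \mapsto \sR(G)$ (with $\triangledown \mapsto \emptyset$) and argue it is an order-reversing bijection from $\mathcal{L}_n^*$ onto the set of antichains of $\mathcal{L}_{n-1}$, using the recursive characterization of $\mathcal{L}_n$ already recorded in the excerpt.

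Next I would identify the lattice operations under this correspondence. The meet in $\mathcal{L}_n^*$ is $G \wedge H = \cgtwo{\,}{\sR(G) \cup \sR(H)}$, i.e.\ under the map it is \emph{union} of option sets followed by pruning to an antichain — which, at the level of downsets, is just union. For the join, $G \vee H$ is the least dead-end above both, which corresponds to the largest antichain contained in $\sR(G)$ \emph{and} in $\sR(H)$ in the downset sense; equivalently, at the level of the order ideals they generate in $\mathcal{L}_{n-1}^*$, it is \emph{intersection}. So the clean way to run the argument is to pass to order ideals: send $G$ to the principal-ideal-style downset $\mathord{\downarrow}\sR(G) \subset \mathcal{L}_{n-1}^*$ generated by its Right options. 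Then meet goes to union of downsets and join goes to intersection of downsets, so $\mathcal{L}_n^*$ embeds as a sublattice of the power-set lattice of $\mathcal{L}_{n-1}^*$, which is distributive; and distributivity is inherited by sublattices. (One must check the image is closed under finite unions and intersections of downsets, which it is by the antichain characterization, and that the embedding is injective, which holds since an antichain is determined by the downset it generates.)

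The main obstacle — really the only nonroutine point — is verifying that the join in $\mathcal{L}_n^*$ actually \emph{exists} and is computed by downset intersection, i.e.\ that $\mathcal{L}_n^*$ is a lattice and not merely a meet semilattice with a top element. Adjoining the top element $\triangledown$ guarantees the empty join and makes the existence of binary joins follow formally from the existence of meets in a poset with a maximum (the join of $G,H$ is the meet of the — nonempty, since $\triangledown$ is there — set of common upper bounds). Once joins are known to exist, the formula $\sR(G\vee H) = $ (the maximal antichain in $\mathord{\downarrow}\sR(G)\cap\mathord{\downarrow}\sR(H)$) is forced by the order-reversing bijection. So I would: (1) note $\mathcal{L}_n^*$ is a meet semilattice with top, hence a lattice; (2) set up $G \mapsto \mathord{\downarrow}\sR(G)$ and check it is an order-reversing injection into $2^{\mathcal{L}_{n-1}^*}$; (3) check the image is closed under union and intersection, so the map is a lattice anti-embedding (equivalently, composing with complementation gives an embedding); (4) conclude distributivity from that of the power-set lattice. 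An alternative, if one prefers to avoid the downset bookkeeping, is a direct verification of the distributive law $G \wedge (H \vee K) = (G\wedge H)\vee(G\wedge K)$ by chasing option sets, but the sublattice-of-a-power-set argument is cleaner and I would present that.
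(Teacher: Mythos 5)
There is a genuine gap, and it sits at the very first step of your encoding. Your claim that for nonzero Left dead-ends in simplest form ``$G \geq H$ iff the Right options of $G$ are a subset of those of $H$'' is false: Lemma~\ref{lemma:de-subset} gives only the direction (subset $\Rightarrow$ $\geq$). Theorem~\ref{theorem:de-comparison} says $G \geq H$ iff every $G^R$ is $\geq$ \emph{some} $H^R$, i.e.\ $\sR(G)$ lies in the \emph{upward} closure of $\sR(H)$; the option sets themselves need not be nested. A counterexample visible in Figure~\ref{figure:dead-end-semilattices}: $\bar{2}\bar{1} \geq (\bar{1}0)_\sh$, although their simplest-form option sets $\{\bar{2},\bar{1}\}$ and $\{\bar{1}0\}$ are disjoint. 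Because of this, your proposed map $G \mapsto \mathord{\downarrow}\sR(G)$ is not an order embedding in either direction (compare $\bar{1} \geq \bar{1}0$, where the downset shrinks, with $\bar{2} \geq (\bar{1}0)_\sh$, where it grows), and the operations do not transport as you claim: $\bar{2} \wedge (\bar{1}0)_\sh = (\bar{1}0)_\sh$, yet $\mathord{\downarrow}\{\bar{1}\} \cup \mathord{\downarrow}\{\bar{1}0\} = \{\bar{1},\bar{1}0\} \neq \{\bar{1}0\} = \mathord{\downarrow}\sR\big((\bar{1}0)_\sh\big)$, so ``meet goes to union of downsets'' already fails. The deeper reason is that removing dominated Right options (which is how simplest forms arise) preserves the upward closure of the option set but not its downward closure, so downsets are not even invariants of the value.

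The repair is to run your set-representation idea with \emph{upward} closures, which is exactly what the paper does: set $[G] = \{X \in \mathcal{L}_{n-1} : X \geq G^R \text{ for some } G^R\}$ and $[\triangledown] = \emptyset$. Then $G$ has the same value as the game with Right option set $[G]$ (adding dominated options), $G \geq H$ iff $[G] \subseteq [H]$, the meet corresponds to $[G] \cup [H]$ and the join to $[G] \cap [H]$ (with $\triangledown$ handling the empty intersection), and injectivity of $G \mapsto [G]$ follows from Theorem~\ref{theorem:deadendcanonical}. This exhibits $\mathcal{L}_n^*$ as (anti-)isomorphic to a family of sets closed under union and intersection, hence distributive. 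So your overall strategy --- embed into a lattice of sets and inherit distributivity --- is the paper's strategy, but as written the crucial choice of closure is the wrong one and the verification steps (2) and (3) of your outline would fail.
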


\begin{proof}
Fix a value of $n \geq 1$ throughout this proof. Define
\begin{align*}
[G] &= \{X \in \mathcal{L}_{n-1} : X \geq G^R \text{ for some } G^R\} & \text{for all $G \in \mathcal{L}_n^\times$} \\
[\triangledown] &= \emptyset \vphantom{\Big(}
\end{align*}
It is easily seen that $G = \combgame{\{\cdot|[G]\}}$ for all $G \in \mathcal{L}_n^\times$, since $\combgame{\{\cdot|[G]\}}$ is obtained from $G$ by adding dominated Right options. Therefore
\begin{align*}
G \wedge H &= \combgame{\{\cdot \cgslash [G] \cup [H]\}} &\text{for all $G,H \in \mathcal{L}_n^\times$} \\
G \wedge \rlap{$\triangledown$}\phantom{H} &= G &\text{for all $G \in \mathcal{L}_n^*$}\vphantom{\Big(}
\end{align*}
Correspondingly, define for all $G,H \in \mathcal{L}_n^*$
\[
G \vee H = \begin{cases}
\triangledown & \text{if $[G] \cap [H] = \emptyset$}; \\
\combgame{\{\cdot|[G] \cap [H]\}} & \text{otherwise}.
\end{cases}
\]
Then we see that for all $G$ and $H$,
\[
[G \vee H] = [G] \cap [H] \quad\text{and}\quad [G \wedge H] = [G] \cup [H].
\]
Moreover, Theorem~\ref{theorem:deadendcanonical} implies that $G = H$ if and only if $[G] = [H]$, so that $G \mapsto [G]$ is an isomorphism from $\mathcal{L}_n^*$ onto a lattice of sets.
\end{proof}

\subsection*{Terminable Ends}

A particular class of Left dead-ends will be especially important to the succeeding analysis.

\begin{definition}
A Left dead-end $G$ is \textbf{terminable} if $0$ is a Right option of~$G$.
\end{definition}

We have seen that $0$ is incomparable with every nonempty Left dead-end. It follows that if \emph{some} form of $G$ is terminable, then so is \emph{every} form, so that terminability is an invariant property of the form of~$G$. Moreover, if $G$ and $H$ are nonempty, then $G + H$ is automatically non-terminable (Proposition~\ref{prop:nonterminable-sums}).

Now for any set of positive integers $\mathcal{N} \subset \mathbb{N}_{>0}$, consider the associated family of games
\[
\A = \{\bar{n}0 : n \in \mathcal{N}\}.
\]
A typical element of $\cl(\A)$ can be written as
\[
G \cong \overline{m} + \overline{n_1}0 + \overline{n_2}0 + \cdots + \overline{n_k}0
\]
with $m \in \mathbb{N}$ and $n_1,\ldots,n_k \in \mathcal{N}$. The sum $G$ can be terminable only in the case where there is a single nonempty summand, and we conclude that the terminable elements of $\cl(\A)$ are exactly
\[
\mathcal{T} = \mathcal{A} \cup \{\bar{1}\}.
\]
It follows that in the universe $\mathcal{D}(\mathcal{A})$, the terminable Left ends are exactly the set~$\mathcal{T}$, and we obtain an infinite (indeed, uncountable) family of subuniverses of~$\mathcal{E}$.

\section{Comparison in Subuniverses of \E}
\label{section:comparison}

We now consider the structure of the general absolute universe $\U \subset \E$. By Propositions \ref{proposition:endsgenerate} and~\ref{proposition:nonewends}, $\U$ is characterized by the set $\A$ of its Left ends.

For any set of Left dead-ends $\A$, denote by $\up(\A)$ the \textbf{upward closure} of~$\A$:
\[
\up(\A) = \{G \in \mathcal{L} : G \geq H \text{ for some } H \in \cl(\A)\}.
\]
If $\U = \D(\A)$ is a dead-ending universe, then we write $\up(\U) = \D(\up(\A))$.

Now if $\up(\U) = \up(\V)$, then \U-strong and \V-strong coincide, and it follows from Theorem~\ref{theorem:lnsequiv} that $\geq_\U$ and $\geq_\V$ are the same relation: $G \geq_\U H$ if and only if $G \geq_\V H$, for all $G$ and~$H$. The major aim of this section is to prove a converse: if $\geq_\U$ and $\geq_\V$ agree on~$\mathcal{D}$ (i.e., if they have identical restrictions to the dicot universe), then in fact $\up(\U) = \up(\V)$. The proof is not difficult, but the result is suprising and has strong implications.

The upward closure appears to expand $\A$ over a potentially infinite domain, but it is actually a quite manageable structure. From Proposition~\ref{prop:de-fbirthday} it follows that for any~$H$, there are just finitely many $G \geq H$; thus finitely generated universes have finitely generated upward closures. Moreover, note that $\up(\A)$, as defined, is already simply closed:

\begin{proposition}
\label{prop:upward-closure-commutes}
$\up(\A)$ is simply closed, for any set of Left dead-ends~\A.
\end{proposition}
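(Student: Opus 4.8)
The plan is to show directly that $\up(\A)$ is closed under the two operations required for simple closure: taking options (hereditary closure, condition (i)) and taking sums (additive closure, condition (ii)). Since every element of $\up(\A)$ is a Left dead-end, both operations stay within $\mathcal{L}$, so it suffices to produce, for each option or sum, a witness in $\cl(\A)$ that lies below it in the absolute order.

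For hereditary closure, suppose $G \in \up(\A)$, say $G \geq H$ with $H \in \cl(\A)$, and let $G^R$ be a Right option of $G$. By Theorem~\ref{theorem:de-comparison}(a), there is some $H^R$ with $G^R \geq H^R$. Now $H \in \cl(\A)$ and $\cl(\A)$ is simply closed, so $H^R \in \cl(\A)$; hence $G^R \geq H^R \in \cl(\A)$, which is exactly the statement that $G^R \in \up(\A)$. (Note $G$ has no Left options to worry about, being a Left dead-end.)

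For additive closure, take $G_1, G_2 \in \up(\A)$ with witnesses $G_i \geq H_i$, $H_i \in \cl(\A)$. I want $G_1 + G_2 \geq H_1 + H_2$, together with $H_1 + H_2 \in \cl(\A)$; the latter is immediate from simple closure of $\cl(\A)$. For the former, I would invoke the fact that $\geq$ on Left dead-ends is absolute (the remark following Theorem~\ref{theorem:de-comparison}), pick any universe $\U$ in which all four games appear (e.g.\ $\U = \D(\{G_1,G_2,H_1,H_2\})$, or simply $\E$ itself, since all are dead-ends), and use that $\geq_\U$ is a partial order compatible with addition — i.e.\ $G_1 \geq_\U H_1$ and $G_2 \geq_\U H_2$ imply $G_1 + G_2 \geq_\U H_1 + H_2$ — which is a standard property of the misère order relative to any universe closed under addition. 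Translating back via absoluteness gives $G_1 + G_2 \geq H_1 + H_2$, so $G_1 + G_2 \in \up(\A)$.

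The main thing to be careful about is the additive-monotonicity step: one must make sure one is entitled to use $o(\cdot + X)$-monotonicity of $\geq_\U$ on sums of dead-ends, which requires only that the ambient universe be additively closed (condition (ii) of Definition~\ref{definition:universe}), and then lean on the absoluteness of $\geq$ for dead-ends so that the particular choice of universe is irrelevant. Alternatively, one could avoid invoking any universe at all and prove $G_1 + G_2 \geq H_1 + H_2$ by a direct induction using Theorem~\ref{theorem:de-comparison}: every Right option of $G_1 + G_2$ is of the form $G_1^R + G_2$ or $G_1 + G_2^R$, and applying part~(a) to the relevant $G_i \geq H_i$ together with the inductive hypothesis produces a dominating Right option of $H_1 + H_2$; part~(b) is vacuous unless both summands are $0$, in which case there is nothing to prove. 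This inductive route is self-contained and is probably the cleaner write-up, though it duplicates reasoning that the absoluteness remark already packages.
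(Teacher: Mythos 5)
Your proof is correct and follows essentially the same route as the paper: hereditary closure via Theorem~\ref{theorem:de-comparison}(a) plus closure of $\cl(\A)$, and additive closure from the fact that $\geq$ respects addition (which the paper simply asserts and you justify in more detail). The additional care you take with additive monotonicity, and the alternative direct induction, are fine but not needed beyond what the paper's one-line appeal already covers.
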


\begin{proof}
To see that $\up(\A)$ is hereditarily closed, let $G \in \up(\A)$, with $G \geq H$ for $H \in \cl(\A)$. Then for every Right option~$G^R$, we have $G^R \geq$ some~$H^R$. Since $\cl(\A)$ is closed, $H^R \in \cl(\A)$, whence $G^R \in \up(\A)$.

Additive closure follows trivially from the fact that $\geq$ respects addition.
\end{proof}

\subsection*{Comparison and Upward Closure}

The familiar \textbf{adjoint} will be a crucial tool. Recall~\cite{Sie13}~\cite{Sie15pmcf}:

\begin{definition}
\label{definition:adjoint}
The \textbf{adjoint} of~$G$, denoted by~$G^\circ$, is defined by
\[
G^\circ = \begin{cases}
\mathspoofwidth{\cgtwo{(G^R)^\circ}{(\fatGL)^\circ}}{\cgstar} & \text{if $G \cong 0$}; \\
\cgtwo{(G^R)^\circ}{\mathrlap{0}\phantom{(\fatGL)^\circ}} & \text{if $G \not\cong 0$ and $G$ is a Left end}; \\
\cgtwo{\phantom{(G^R)^\circ}\mathllap{0}}{(\fatGL)^\circ} & \text{if $G \not\cong 0$ and $G$ is a Right end}; \\
\cgtwo{(G^R)^\circ}{(\fatGL)^\circ} & \text{otherwise}.
\end{cases}
\]
\end{definition}

\begin{proposition}
\label{prop:adjoint-p}
$o(G+G^\circ)$ is a \P-position, for all $G \in \mathcal{M}$.
\end{proposition}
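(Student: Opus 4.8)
The plan is to prove that $o(G + G^\circ)$ is a $\P$-position by induction on the formal birthday of $G$, showing directly that the second player wins the disjunctive sum $G + G^\circ$ in misère play. Concretely, I must show that whatever move the first player makes in $G + G^\circ$, the second player has a response leading to another $\P$-position (or, at the terminal stage, that the player who has just moved into an empty position has won, which in misère play means the mover loses — so I must check that no legal move from $G + G^\circ$ terminates the game to the first player's advantage).

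First I would set up the base case: if $G \cong 0$, then $G^\circ = \cgstar$, and $o(0 + \cgstar) = o(\cgstar)$, which is a $\P$-position in misère play (the first player is forced to move to $0$ and loses). For the inductive step, suppose the first player (say Left, by symmetry using conjugate closure and the fact that $\overline{G^\circ} = \overline{G}^\circ$, which follows by inspection of Definition~\ref{definition:adjoint}) moves in $G + G^\circ$. There are two cases: a move in the $G$ component to some $G + G^\circ \to G^L + G^\circ$, or a move in the adjoint component to $G + (G^\circ)^L$. The key structural facts I will use are read off directly from the definition of $G^\circ$: every Right option of $G^\circ$ is of the form $(G^R)^\circ$ (plus possibly the extra option $0$ when $G$ is a Right end), and every Left option of $G^\circ$ is of the form $(\fatGL)^\circ = (G^R)^\circ$ — wait, more carefully, the Left options of $G^\circ$ are $(G^R)^\circ$ ranging over Right options $G^R$ of $G$ (with an extra $0$ when $G \cong 0$ or $G$ is a Left end). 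So when Left moves $G + G^\circ \to G + (G^\circ)^L$, either $(G^\circ)^L = 0$ (only possible when $G$ is a Left end, i.e.\ $G$ has no Left options), leaving position $G$ with Left to move but $G$ a nonempty Left end — impossible for Left — or $(G^\circ)^L = (G^R)^\circ$ for some Right option $G^R$ of $G$; then Right responds $G + (G^R)^\circ \to G^R + (G^R)^\circ$, which is a $\P$-position by induction. Symmetrically, when Left moves $G + G^\circ \to G^L + G^\circ$, Right responds in the adjoint to $G^L + (G^L)^\circ$ — this requires that $(G^L)^\circ$ be a legal Right option of $G^\circ$; inspecting the definition, the Right options of $G^\circ$ are $(\fatGL)^\circ$, i.e.\ the conjugate-side options, which are exactly $(G^L)^\circ$ ranging over Left options $G^L$ of $G$ (plus $0$ when $G$ is a Right end) — so this response is always available, and yields a $\P$-position by induction.

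I also need to handle the terminal subtlety: I must verify that the first player is never able to move $G + G^\circ$ directly to the empty position $0$ in one move, which would win for them in misère. A single move changes only one component, so reaching $0 + 0$ in one move is impossible unless one component was already $0$; if $G \cong 0$ this is the base case already treated, and if $G^\circ \cong 0$ — but $G^\circ$ is never $0$, since by Definition~\ref{definition:adjoint} every case produces a game with at least one option. So no single move empties the sum, and the induction closes. I should also double-check the case analysis of the definition is exhaustive with respect to "$G$ is a Left end'', "$G$ is a Right end'', "$G \cong 0$'', and "otherwise'', and confirm the conjugation symmetry $(\overline{G})^\circ = \overline{G^\circ}$ so that the "by symmetry'' shortcut for Right's opening moves is legitimate.

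The main obstacle, I expect, is the careful bookkeeping of exactly which options $G^\circ$ has in each of the four cases of the definition, and making sure that Right's mirroring response is always a legal move in $G^\circ$ — in particular, the asymmetry in the definition (the extra $0$ options appearing only for ends and for $0$) must be tracked so that the "dangerous'' first-player moves into a premature Left/Right end are shown to be either illegal or non-terminal. Everything else is a routine second-player-mirroring argument, but this case analysis is where an error would hide, so I would write it out component by component. (This result, or a close variant, is standard — see \cite{Sie13} \cite{Sie15pmcf} — so I would keep the write-up brief and mainly cite the structure of the argument there, adapting only as needed.)
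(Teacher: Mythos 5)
Your overall plan is exactly the paper's: prove one inequality by having the second player mirror through the adjoint (answering $G^L+G^\circ$ with $G^L+(G^L)^\circ$ and $G+(G^R)^\circ$ with $G^R+(G^R)^\circ$), dispose of the one non-mirrorable move onto the inserted option $0$ of $G^\circ$ via the mis\`ere end condition, and get the reverse inequality by conjugation symmetry (the paper argues the direction $o(G+G^\circ)\geq\P$, you argue $\leq\P$; the two are equivalent given $\overline{G^\circ}=\overline{G}^\circ$). However, the one case that is not routine mirroring --- precisely the case you flagged as the danger spot --- is handled incorrectly. By Definition~\ref{definition:adjoint}, $0$ appears as a \emph{Left} option of $G^\circ$ exactly when $G\cong 0$ or $G$ is a \emph{Right} end (it is inserted because there are then no adjoints $(G^R)^\circ$ available to serve as Left options of $G^\circ$); it appears as a Right option of $G^\circ$ when $G$ is a Left end. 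You have these swapped in both places, and in the critical branch you then reason incoherently: Left's move $G+G^\circ\to G+0=G$ is a perfectly legal move, not ``impossible,'' and after it, it is \emph{Right} to move, not Left, so the sentence ``leaving position $G$ with Left to move \dots impossible for Left'' neither describes the position correctly nor disposes of it.

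The correct disposal, mirroring the paper's one-line clause, is: this branch arises only when $G$ is a Right end (the case $G\cong 0$ being your base case), so after Left's move to $G+0=G$, Right has no move and therefore wins outright under the mis\`ere convention --- i.e., $o(G)\leq\N$ holds \emph{a priori}, the mirror image of the paper's ``$o(G)\geq\N$ a priori'' for Right's move to $G+0$ when $G$ is a Left end. Relatedly, your ``terminal subtlety'' paragraph has the mis\`ere convention backwards: a player who moves the sum to the empty position has made the last move and thereby \emph{loses}, so there is nothing to rule out there; the check is harmless but unnecessary. With the exceptional branch repaired as above, the remainder of your argument (base case $o(\cgstar)=\P$, the two mirroring responses, and the symmetry reduction via $\overline{G^\circ}=\overline{G}^\circ$) is sound and coincides with the paper's proof.
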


\begin{proof}
It suffices (by symmetry) to show that $o(G+G^\circ) \geq \P$. Every Right option of $G+G^\circ$ reverts to a position of the form $G^L+(G^L)^\circ$ or $G^R+(G^R)^\circ$, except for Right's move to $G+0$ when $G$ is a Left end. But in that case, $o(G) \geq \N$ \emph{a priori}.
\end{proof}

When $G$ is a Left dead-end, the adjoint reduces to
\[
G^\circ = \begin{cases}
\mathspoofwidth{\cgtwo{(G^R)^\circ}{0}}{\cgstar} & \text{if $G \cong 0$}; \\
\cgtwo{(G^R)^\circ}{0} & \text{otherwise}.
\end{cases}
\]
Closely related is the \textbf{Left-modified adjoint} $G^\oast$, defined only for Left ends, which we will need for technical reasons:

\begin{definition}
Let $G$ be a Left dead-end. The \textbf{Left-modified adjoint} of~$G$, denoted by~$G^\oast$, is given by:
\[
G^\oast = \begin{cases}
\cgtwo{\mathspoofwidth{(G^R)^\circ}{0}\vphantom{(G^R)^\circ}}{\cgstar} & \text{if $G \cong 0$}; \\
\cgtwo{(G^R)^\circ}{\cgstar} & \text{otherwise}.
\end{cases}
\]
\end{definition}
Note that in the case $G \not\cong 0$, the Left options of $G^\oast$ have the form $(G^R)^\circ$ (ordinary adjoint), \emph{not}~$(G^R)^\oast$.

\begin{figure}
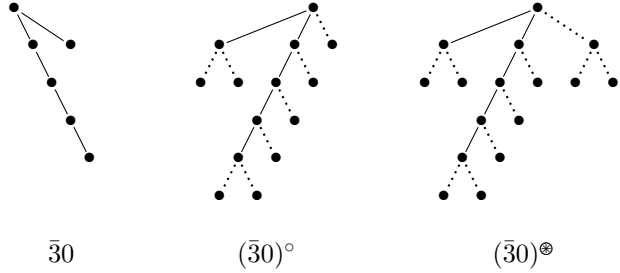

\begin{center}
\begin{tabular}{c@{\qquad\quad}c@{\qquad\quad}c}
$\letree{\en(|\en(|\en(|\en(|\en(|{\color{white}\en}[linestyle=white]))))\en)}$ &
$\letree{\en(\en(\en(\en(\en(\en[linestyle={dotted,thick}]|\en[linestyle={dotted,thick}])|\en[linestyle={dotted,thick}])|\en[linestyle={dotted,thick}])|\en[linestyle={dotted,thick}])++\en(\en[linestyle={dotted,thick}]|\en[linestyle={dotted,thick}])|\en[linestyle={dotted,thick}])}$ &
$\letree{\en(\en(\en(\en(\en(\en[linestyle={dotted,thick}]|\en[linestyle={dotted,thick}])|\en[linestyle={dotted,thick}])|\en[linestyle={dotted,thick}])|\en[linestyle={dotted,thick}])++\en(\en[linestyle={dotted,thick}]|\en[linestyle={dotted,thick}])|++\en[linestyle={dotted,thick}](\en[linestyle={dotted,thick}]|\en[linestyle={dotted,thick}]))}$
\vspace{0.15in}\\
$\bar{3}0$ & $(\bar{3}0)^\circ$ & $(\bar{3}0)^\oast$
\end{tabular}
\end{center}
\caption{The Left end $\bar{3}0$, pictured together with its adjoint and Left-modified adjoint. For clarity, dotted lines are used to indicate the ``adjoined'' structure that was not present in the original tree.}
\end{figure}

The importance of adjoints to the theory of dead-ends is captured by the following lemma.

\begin{lemma}
\label{lemma:adjoint-strength}
Let $G$ and $H$ be Left dead-ends. Then
\[
G \geq H \quad\text{if and only if}\quad o(G^\circ + H) \leq \P.
\]
\end{lemma}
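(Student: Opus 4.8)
\textbf{Proof proposal for Lemma~\ref{lemma:adjoint-strength}.}
The plan is to prove the two implications separately, using Theorem~\ref{theorem:de-comparison} to translate the order relation into a statement about Right options, and Proposition~\ref{prop:adjoint-p} together with the explicit shape of the dead-end adjoint to control outcomes. Throughout I will use that $G^\circ = \cgtwo{(G^R)^\circ}{0}$ for a nonempty Left dead-end $G$, and $G^\circ = \cgstar$ when $G \cong 0$, so that $G^\circ$ is a dicot whose Left options are the adjoints of the $G^R$ and whose only Right option (in the nonempty case) is $0$.

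\textbf{Forward direction ($G \geq H \Rightarrow o(G^\circ + H) \leq \P$).}
I would argue by induction on the formal birthdays, showing that Right (moving first in $G^\circ + H$, and also responding to a Left move) always wins or the position is a \P-position. Since we want $o(G^\circ + H) \leq \P$, it suffices to show that whenever Left moves first, Right can reply to a \P-position-or-better (for Right). First consider the degenerate cases: if $H \cong 0$ then by Theorem~\ref{theorem:de-comparison}(b) also $G \cong 0$, so $G^\circ + H \cong \cgstar$, which has outcome $\P$. If $G \cong 0$ but $H \not\cong 0$ this cannot happen, since $G \geq H$ and (b) would force $H \cong 0$. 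So assume both nonempty. Left's moves in $G^\circ + H$ are: (i) a move in $G^\circ$ to $(G^R)^\circ + H$ for some $G^R$; (ii) there are no Left moves in $H$, since $H$ is a Left end. For a move of type (i): since $G \geq H$, by Theorem~\ref{theorem:de-comparison}(a) there is some $H^R$ with $G^R \geq H^R$; then Right responds in $H$, moving to $(G^R)^\circ + H^R$, which by the induction hypothesis has outcome $\leq \P$. Hence every Left opening move is answered into a position of outcome $\leq \P$, which gives $o(G^\circ + H) \leq \P$. (One should double-check the base case where $G^R$ or $H^R$ is $0$; there the adjoint is $\cgstar$ and the argument still goes through, or one appeals directly to Proposition~\ref{prop:adjoint-p}.)

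\textbf{Reverse direction ($o(G^\circ + H) \leq \P \Rightarrow G \geq H$).}
Here I would prove the contrapositive: if $G \not\geq H$, then $o(G^\circ + H) \geq \N$, i.e.\ Left wins $G^\circ + H$ moving first. By Theorem~\ref{theorem:de-comparison}, $G \not\geq H$ means either (a) fails — there is some $G^R$ with $G^R \not\geq H^R$ for every $H^R$ — or (b) fails — $G \cong 0$ but $H \not\cong 0$. In case (b): $G^\circ + H \cong \cgstar + H$ with $H$ a nonempty Left end; Left moves in the $\cgstar$ component to $H$, and since $H \not\cong 0$ is a Left end, $o(H) \geq \N$, giving $o(\cgstar + H) \geq \N$ — wait, I must be careful: after Left moves $\cgstar \to 0$ it is Right's turn in $H$; since $H$ is a Left end, Right moving in $H$ (a nonempty Left dead-end) — hmm, one needs $o(H)$ from Right's perspective. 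Actually the cleaner move: in $\cgstar + H$, Left should instead use $H$ or observe that $o(\cgstar) = \P$ and $H$ is a Left $\mathcal{M}$-end so the proviso bites; I would instead directly invoke that $\cgstar = 0^\circ$ and appeal to a small separate computation, or recurse. In case (a): pick the bad $G^R$; Left moves in $G^\circ$ to $(G^R)^\circ + H$. I then claim $o((G^R)^\circ + H) \geq \N$, and by the induction hypothesis (contrapositive form) it suffices to show $(G^R) \not\geq H$ as dead-ends — but that is not quite what fails; what fails is $G^R \not\geq H^R$ for all $H^R$. So the recursion is on the pair in a different configuration. This suggests the right inductive statement is symmetric: prove simultaneously that for Left dead-ends $G, H$, $G \geq H \iff o(G^\circ + H) \leq \P \iff o(H^\circ + G) \geq \P$ (using conjugation / the fact that $\overline{G^\circ}$ relates to $\overline{G}\,$), so that the recursion lands on $(G^R)^\circ + H$ with the roles set up to apply the hypothesis to the pair $(H, G^R)$ or similar.

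\textbf{Main obstacle.}
The real difficulty is bookkeeping the recursion in the reverse direction: a failure of condition (a) is a statement about $G^R$ versus \emph{all} $H^R$, whereas the adjoint move produces $(G^R)^\circ + H$, and to apply induction one wants a comparison failure between $H$ and a single option of $G^R$. Resolving this cleanly will likely require strengthening the inductive hypothesis to a biconditional that also encodes the ``dual'' outcome bound (as in Proposition~\ref{prop:adjoint-p}), and being careful with the cases where $G$, $H$, or an option equals $0$, since those are exactly the cases where the proviso in the underlying comparison theorems is active. I expect that once the correct simultaneous induction statement is pinned down, each case is a short outcome computation using the explicit form of the dead-end adjoint and Theorem~\ref{theorem:de-comparison}.
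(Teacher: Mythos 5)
Your forward direction is fine (if heavier than necessary: since $G^\circ$ is a dicot, it lies in every universe, so $G \geq H$ gives $o(G^\circ + H) \leq o(G^\circ + G) = \P$ directly from Proposition~\ref{prop:adjoint-p}, with no induction needed). The problem is the reverse direction, which you leave genuinely unfinished: you identify the ``main obstacle'' --- that failure of condition (a) of Theorem~\ref{theorem:de-comparison} gives a $G^R$ with $G^R \not\geq H^R$ for \emph{every} $H^R$, while Left's probe lands on $(G^R)^\circ + H$, a pair to which the induction hypothesis seems not to apply --- and then only gesture at ``strengthening the inductive hypothesis,'' without producing the argument. But the obstacle is not real, and no strengthened or simultaneous statement is needed: the statement you are proving is already a biconditional, and you simply need to look one ply deeper. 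From $(G^R)^\circ + H$ (Right to move), Right's only moves are to $0 + H$, which loses because $H$ is a Left end (Left is then unable to move and wins; note every nonempty Left dead-end has outcome $\sL$), or to $(G^R)^\circ + H^R$, which by the induction hypothesis applied to the strictly simpler pair $(G^R, H^R)$ --- using $G^R \not\geq H^R$ --- has outcome $\geq \N$, so Right loses there too. (Right has no move of the form $(G^{RL})^\circ + H$ since $G^R$, being a Left end, has no Left options.) Hence $o((G^R)^\circ + H) \geq \P$, and Left's probe wins; note also your small slip here: after Left's move you need outcome $\geq \P$, not $\geq \N$, since it is Right's turn.

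The paper runs this same one-ply-deeper analysis, but phrased directly rather than contrapositively: assuming $o(G^\circ + H) \leq \P$, every Left probe $(G^R)^\circ + H$ satisfies $o \leq \N$, and Right's winning reply can be neither to $0 + H$ (as $H$ is a Left end) nor of the form $(G^{RL})^\circ + H$ (as $G^R$ has no Left options), so it must be some $(G^R)^\circ + H^R$ with outcome $\leq \P$; induction then yields $G^R \geq H^R$, which is exactly condition (a) of Theorem~\ref{theorem:de-comparison}, while condition (b) is handled by the $G \cong 0$ case exactly as in your degenerate-case discussion. Your case-(b) muddle in the contrapositive also resolves trivially once you record that nonempty Left dead-ends have outcome $\sL$: from $\cgstar + H$, Left moves $\cgstar$ to $0$, Right must move in $H$ to some $H^R$, and Left, unable to move, wins. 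So the approach you chose can be completed, but as written the key inductive step is missing rather than merely deferred.
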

(Compare: in normal play, $G \geq H$ if and only if $o(-G + H) \leq \P$.)

\begin{proof}
If $G \geq H$, then we have immediately that
\[
o(G^\circ + H) \leq o(G^\circ + G) = \P.
\]

Conversely, suppose that $o(G^\circ + H) \leq \P$. If $G \cong 0$, then $G^\circ = *$, so that $o(H) \leq \N$. Since $H$ is a Left dead-end, this implies $H \cong 0$ as well.

Therefore, assume instead that $G \not\cong 0$. Then to prove $G \geq H$, we must show that for every~$G^R$, there is an $H^R$ with $G^R \geq H^R$ (Theorem~\ref{theorem:de-comparison}). But for each such~$G^R$, the adjoint $(G^R)^\circ$ is a Left option of~$G^\circ$, so that
\[
o((G^R)^\circ + H) \leq \N,
\]
that is, Right has a winning move on $(G^R)^\circ + H$. Right's winning move cannot be of the form $(G^{RL})^\circ + H$, since $G^R$ has no Left option; nor can it be to $0 + H$, since $H$ is a Left end. Therefore it must be to some $(G^R)^\circ + H^R$, whence by induction $G^R \geq H^R$.
\end{proof}

We arrive at the following Theorem, connecting the upward closure of $\A$ with dicot comparison in the universe $\D(\A)$.

\begin{theorem}
\label{theorem:upward-closure-comparison}
Let $\U = \D(\A)$ be a dead-ending universe. Then the following are equivalent, for every Left dead-end~$G$.
\begin{enumerate}
\item[(i)] $G \not\in \up(\A)$;
\item[(ii)] $G^\circ$ is Left \U-strong;
\item[(iii)] $G^\oast \geq_\U 0$.
\end{enumerate}
\end{theorem}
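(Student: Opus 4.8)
The plan is to prove the cycle of implications (i) $\Rightarrow$ (ii) $\Rightarrow$ (iii) $\Rightarrow$ (i), exploiting Lemma~\ref{lemma:adjoint-strength} and the fact that (by Proposition~\ref{proposition:nonewends}) the Left ends of $\U = \D(\A)$ are exactly $\cl(\A \cup \bar\A) \cap \mathcal{L} = \cl(\A)$, so that $G^\circ$ being Left \U-strong means precisely $o(G^\circ + X) \geq \N$ for every $X \in \cl(\A)$. The key translation is that, by Lemma~\ref{lemma:adjoint-strength}, for a Left dead-end $X$ we have $o(G^\circ + X) \leq \P$ iff $G \geq X$; contrapositively, $o(G^\circ + X) \geq \N$ iff $G \not\geq X$. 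So ``$G^\circ$ is Left \U-strong'' says exactly ``$G \not\geq X$ for every $X \in \cl(\A)$,'' which is visibly the negation of $G \in \up(\A)$. This immediately gives (i) $\Leftrightarrow$ (ii).

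For the implication (ii) $\Rightarrow$ (iii), I want to compare $G^\oast$ with $G^\circ$ via Theorem~\ref{theorem:lnsequiv}: I will show that $G^\oast \geq_\U 0$ follows from $G^\circ$ being Left \U-strong. Checking $G^\oast \geq_\U 0$ means verifying the four conditions of Theorem~\ref{theorem:lnsequiv} with $H = 0$. Condition (b) is vacuous ($0$ has no Left option); condition (d) is vacuous unless $G^\oast$ is a Right end, which it is not (its Right option set is $\{*\}$ when $G\not\cong 0$, or $\{*\}$ again when $G\cong 0$); condition (c) demands that $G^\oast$ be Left \U-strong, i.e.\ $o(G^\oast + X) \geq \N$ for all Left ends $X \in \cl(\A)$; and condition (a) demands that for every Right option of $G^\oast$—namely $*$—either $0$ has a matching Right option (it does not) or some $*^{L} = 0 \geq_\U 0$, which holds. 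So the content reduces to showing $G^\oast$ is Left \U-strong given that $G^\circ$ is. When $G \not\cong 0$, $G^\oast$ differs from $G^\circ$ only in that the Right option $0$ of $G^\circ$ has been replaced by $*$; the point is that in $G^\oast + X$ with $X$ a Left end, Right's move to $0 + X$ was losing for Right anyway (since $X$ is a Left end, $o(X) \geq \N$), and Right's move to $* + X$ is no better—Left responds on the $*$ to reach $0 + X$ with Left to move, again $o(X) \geq \N$—so every Right first move in $G^\oast + X$ still loses, i.e.\ $o(G^\oast + X) \geq \N$. When $G \cong 0$, $G^\oast = \cgtwo{0}{\cgstar}$ and one checks directly that $o(G^\oast + X) \geq \N$ for any Left end $X$. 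I expect this to be the most delicate step, because it requires a careful game-tree argument tracking Right's options in $G^\oast + X$ and invoking the Left-end hypothesis on $X$ at the leaves; the subtlety is exactly why the Left-modified adjoint was introduced.

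For (iii) $\Rightarrow$ (i), I will argue the contrapositive: if $G \in \up(\A)$, so $G \geq X$ for some $X \in \cl(\A)$, then $G^\oast \not\geq_\U 0$. Here I use that $X^\circ \in \U$ (since $X \in \cl(\A) \subset \U$ and $\U$ is closed under adjoints—adjoints are built from subpositions and dicotic-style constructions, so $X^\circ \in \D(\A)$), and that by Lemma~\ref{lemma:adjoint-strength} (or directly Proposition~\ref{prop:adjoint-p}) $o(X + X^\circ)$ is a \P-position. The strategy is to test $G^\oast$ against a suitable element of $\U$: I would show $o(G^\oast + X^\circ) < o(0 + X^\circ) = o(X^\circ)$, or more precisely exhibit a Right win on $G^\oast + X^\circ$ while Left can win $X^\circ$ going first, contradicting $G^\oast \geq_\U 0$. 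Since $G \geq X$, one has $o(G^\circ + X) \leq \P$ by Lemma~\ref{lemma:adjoint-strength}; the Left-modified adjoint $G^\oast$ has the extra Right option $*$, and playing $*$ against $X^\circ$—or more carefully, mirroring the adjoint pairing that witnesses $o(G^\circ + X)\le\P$ together with the $*$ move—lets Right win $G^\oast + X^\circ$. This reuses the adjoint machinery of Lemma~\ref{lemma:adjoint-strength} and Proposition~\ref{prop:adjoint-p} in the ``wrong direction'' and should close the loop. The main obstacle overall is the bookkeeping in step (ii) $\Rightarrow$ (iii): getting the reduction of Theorem~\ref{theorem:lnsequiv}(c) for $G^\oast$ down to the $G^\circ$ hypothesis cleanly, without circularity in the induction on the rank of $G$.
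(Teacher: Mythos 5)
Your (i) $\Leftrightarrow$ (ii) is correct and is exactly the paper's argument: the Left ends of $\U$ are $\cl(\A)$, and Lemma~\ref{lemma:adjoint-strength} translates ``$G^\circ$ is Left \U-strong'' into ``$G \not\geq X$ for all $X \in \cl(\A)$''. The genuine gap is in (ii) $\Rightarrow$ (iii). Your reduction via Theorem~\ref{theorem:lnsequiv} to showing that $G^\oast$ is Left \U-strong is fine, but the argument you give for that strongness is wrong: you analyze \emph{Right's} first moves in $G^\oast + X$, which (even if they all lost) would only address $o(G^\oast+X) \geq \P$; the claim $o(G^\oast+X) \geq \N$ is about \emph{Left's} first moves. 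Worse, your argument never uses hypothesis (ii), so if it were valid it would prove $G^\oast$ Left \U-strong for \emph{every} Left dead-end $G$, which is false: if $G \geq X$ for some $X \in \cl(\A)$ with $X \neq 0$, then $o(G^\circ+X) \leq \P$ by Lemma~\ref{lemma:adjoint-strength}, and since Left's first moves in $G^\oast+X$ lead to exactly the same positions $(G^R)^\circ + X$ as in $G^\circ+X$ (recall $X$ is a Left end), we get $o(G^\oast+X) \leq \P$ as well. Likewise your ``one checks directly'' claim for $G \cong 0$ is false: $o(\combgame{\{0|\cgstar\}} + 0) \leq \P$, since Left's only move is to $0$, after which Right cannot move and wins. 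The correct step---the paper's---is the one-line observation that $G^\circ$ and $G^\oast$ have the \emph{same Left options}, and in a sum with a Left end only those options are available to Left moving first, so Left \U-strongness transfers from $G^\circ$ to $G^\oast$; this is precisely where (ii) gets used.

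Your (iii) $\Rightarrow$ (i) is also not established. Testing $G^\oast$ against $X^\circ$ does not in general produce the required violation: take $G = X = \bar{1}$ (so $G \in \up(\A)$ whenever $\U \neq \D$). Then $G^\oast = \combgame{\{\cgstar|\cgstar\}}$ and $X^\circ = \combgame{\{\cgstar|0\}}$, and a short computation gives $o(G^\oast + X^\circ) = \sL = o(X^\circ)$; Right does not win $G^\oast + X^\circ$ at all, so this test is silent. The distinguishing position is $X$ itself: as above $o(G^\oast + X) \leq \P$ while $o(X) = \sL$ (or, when $X = 0$, $o(G^\oast) \leq \P$ while $o(0) = \N$), so $G^\oast \not\geq_\U 0$. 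Alternatively, argue as the paper does, with no contrapositive: $G^\oast \geq_\U 0$ forces $G^\oast$ to be Left \U-strong (every Left end $Y \in \U$ has $o(Y) \geq \N$, so $o(G^\oast + Y) \geq o(Y) \geq \N$), hence $G^\circ$ is Left \U-strong by the shared-Left-options observation, and then Lemma~\ref{lemma:adjoint-strength} (or $o(G^\circ+G) = \P$ from Proposition~\ref{prop:adjoint-p}) yields $G \notin \up(\A)$. Note also that no induction on the rank of $G$ is needed anywhere in this theorem, so the circularity you were worried about does not arise.
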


\begin{proof}
(i) $\Rightarrow$ (ii): Suppose $G \not\in \up(\A)$. Then for every Left end $X \in \U$, we have $G \not\geq X$, so by Lemma~\ref{lemma:adjoint-strength} it follows that $o(G^\circ + X) \geq \N$.

\vspace{0.1in}\noindent
(ii) $\Rightarrow$ (iii): Since $G^\circ$ is Left \U-strong and $G^\circ$ and $G^\oast$ have the same Left options, it follows that $G^\oast$ is also Left \U-strong. Thus it suffices to verify the maintenance property. But the only Right option of $G^\oast$ is $\cgstar$, which reverts to~$0$. Certainly $0 \geq_\U 0$, and the conclusion follows.

\vspace{0.1in}\noindent
(iii) $\Rightarrow$ (i): If $G^\oast \geq_\U 0$, then $G^\oast$ must be \U-strong, so $G^\circ$ is \U-strong. Therefore $o(G^\circ + X) \geq \N$ for all $X \in \cl(\A)$, which implies $o(G^\circ + X) \geq \N$ for all $X \in \up(\A)$. In particular, since $o(G^\circ + G) \leq \P$, we have $G \not\in \up(\A)$.
\end{proof}







It follows that $\up(\A)$ is recoverable from the restriction of $\geq_\U$ to dicots:
\[
\up(\A) = \{G \in \mathcal{L} : G^\oast \not\geq_\U 0\}.
\]
The following facts are now apparent. Consider a partial order $\succcurlyeq$ of the dicot universe~\D. Say that $\succcurlyeq$ is \textbf{permissible} if there is some dead-ending universe~$\U$ for which $\succcurlyeq$ is the restriction of $\geq_\U$ to~\D. Then:
\begin{itemize}
\item There is a one-to-one correspondence between upward-closed sets of Left dead-ends and permissible dicot orderings.
\item For any dead-ending universe \U, the relation $\geq_\U$ is determined entirely by its restriction to~\D.
\end{itemize}





\subsection*{The Hierarchy of Partial Orders}

In Section~\ref{section:left-ends}, we showed that there is an infinite hierarchy of universes $\U$ with $\D \subset \U \subset \E$. Now Theorem~\ref{theorem:upward-closure-comparison} shows that every such $\U$ imposes the same order-relations as its upward closure $\up(\U)$, and therefore embeds as a submonoid of $\up(\U)$. In light of Theorem~\ref{theorem:upward-closure-comparison}, it makes sense to regard upward-closed universes as the ``fundamental'' elements of the hierarchy, in the sense that the others introduce no new structure.

We might instead consider a hierarchy of permissible dicot orderings, each a refinement of $\geq_\D$ and a coarsening of $\geq_\E$. This hierarchy is, of course, isomorphic (by Theorem~\ref{theorem:upward-closure-comparison}) to the corresponding hierarchy of upward-closed universes.

Note that the uncountably many universes introduced in Section~\ref{section:left-ends} all have distinct upward closures, and hence, distinct orderings $\geq_\U$. To see this, let $\U$ and $\mathcal{V}$ be any two such universes; we can assume without loss of generality that there is some $\bar{n}0 \in \U$ with $\bar{n}0 \not\in \mathcal{V}$. Then the terminable elements of $\mathcal{V}$ (other than~$\bar{1}$) all have the form $\bar{k}0$ for $k \neq n$. But $\bar{n}0 \not\geq \bar{1}$ and $\bar{n}0 \not\geq \bar{k}0$, and also $\bar{n}0 \not\geq G$ for any non-terminable~$G$. It follows that $\bar{n}0 \not\in \up(\mathcal{V})$.

In particular, this establishes the existence of an uncountable family of distinct orderings $\geq_\U$.

\subsection*{Constructive Comparison}

As we have seen, there are uncountably many distinct relations~$\geq_\U$: every set $\mathcal{N}$ of natural numbers is recoverable from dicot comparisons modulo some universe~\U. We therefore cannot hope to find algorithmic constructions for all of them. Nonetheless, we can show that whenever the Left ends of $\U \subset \E$ are strongly computable (in a sense to be defined shortly), then so is the relation~$\geq_\U$.

\begin{definition}
Let $G$ be any partizan game. The \textbf{$n^\text{th}$ truncation} $\tau_n(G)$ is defined by:
\[
\tau_0(G) = 0; \qquad \tau_{n+1}(G) = \combgame{\{\tau_n(G^L)|\tau_n(G^R)\}}.
\]
\end{definition}

$\tau_n(G)$ is equivalent to playing on~$G$ and stopping immediately after the $n^\text{th}$ move.

\begin{definition}
\label{definition:testset}
Let $\mathcal{A}$ be a set of Left dead-ends. The \textbf{$n^\text{th}$ test set} $T_n(\mathcal{A})$ is the set of minimal elements of $\{\tau_n(G) : G \in \cl(\mathcal{A})\}$.

If $\U = \D(\mathcal{A})$ is a dead-ending universe with Left ends~$\mathcal{A}$, then we write $T_n(\U)$ as a synonym for $T_n(\mathcal{A})$.
\end{definition}

(In Definition~\ref{definition:testset}, the ``minimal elements'' are taken according to the absolute ordering of Left dead-ends. Since $\{\tau_n(G) : G \in \cl(\mathcal{A})\} \subset \mathcal{M}_n$, it is always finite, so the set of its minimal elements is nonempty.)

\begin{examples}
The test sets for some familiar universes are easily described:
\begin{itemize}
\item $T_n(\D) = \{0\}$.
\item $T_n(\E) = \{0,M_n\}$.
\item $T_n(\D(\bar{1})) = \{0,\bar{1},\ldots,\bar{n}\}$.
\item $T_n(\D(\bar{1}0)) = \{\bar{1}0,\ 2\cdot(\bar{1}0),\ \ldots,\ k\cdot(\bar{1}0)\}$ if $n = 2k$;

${\color{white}T_n(\D(\bar{1}0)) = }~\{\bar{1}0,\ 2\cdot(\bar{1}0),\ \ldots,\ k\cdot(\bar{1}0),\ k\cdot(\bar{1}0) + \bar{1}\}$ if $n = 2k+1$.
\end{itemize}
(The case $T_n(\D(\bar{1}0))$ will be discussed in more detail in Section~\ref{section:domineering}.)
\end{examples}

\begin{theorem}
\label{theorem:de-truncation}
Let $\U$ be a dead-ending universe and let $G \in \mathcal{M}$. Then $G$ is Left \U-strong if and only if $o(G + X) \geq \N$ for all $X \in T_n(\U)$, where $n = \fbirthday(G)$.
\end{theorem}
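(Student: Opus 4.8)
The plan is to prove both directions by reducing an arbitrary Left end $X \in \U$ to a member of $T_n(\U)$ of no greater ``relevance'' to the outcome of $G + X$. The ``only if'' direction is immediate: if $G$ is Left $\U$-strong then $o(G+X) \geq \N$ for \emph{every} Left end $X \in \U$, and every element of $T_n(\U)$ is (the value of) $\tau_n(Y)$ for some $Y \in \cl(\A) \subset \U$, hence is itself a Left dead-end lying in $\U$ (truncations of dead-ends are dead-ends, and $\D(\A)$ is hereditarily and dicotically closed, so $\tau_n(Y) \in \U$). So the content is in the ``if'' direction.

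\textbf{Key steps for the converse.}
Assume $o(G+X) \geq \N$ for all $X \in T_n(\U)$, with $n = \fbirthday(G)$; we must show $o(G+X') \geq \N$ for an arbitrary Left end $X' \in \U$. First I would reduce $X'$ to something in $\cl(\A)$: by Proposition~\ref{proposition:endsgenerate} (or directly, since the Left ends of $\D(\A)$ are exactly the members of $\cl(\A \cup \bar\A)$ that are Left ends, i.e.\ $\cl(\A)$ by Proposition~\ref{proposition:nonewends}), we may assume $X' \in \cl(\A)$. Second, I would pass from $X'$ to its $n$-th truncation: the main lemma to establish is that for any Left dead-end $Y$ and any game $G$ with $\fbirthday(G) \leq n$,
\[
o(G + Y) \;\geq\; \N \quad\Longleftarrow\quad o\bigl(G + \tau_n(Y)\bigr) \;\geq\; \N,
\]
equivalently $o(G+\tau_n(Y)) \geq o(G+Y)$ when the left side is at least $\N$ — indeed one expects $o(G+\tau_n(Y)) = o(G+Y)$ for such $G$. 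The intuition is that a play of $G + Y$ lasts at most $\fbirthday(G) + \fbirthday(Y)$ moves, but only $G$ can ``force'' the end of the game since $Y$ is a Left dead-end (Left never moves, so the game on the $Y$-component runs out only by Right's choice); cutting $Y$ off at depth $n = \fbirthday(G)$ cannot affect who is left without a move, because whenever a position $G' + Y'$ is reached with $G'$ a subposition of $G$, the depth already spent in $Y$ is at most $\fbirthday(G) - \fbirthday(G')$, so the truncation is never ``felt.'' I would prove this by induction on $\fbirthday(G)$, tracking the statement ``$o(G+Y) = o(G+\tau_n(Y))$ whenever $\fbirthday(G) \leq n$'' and using that $\tau_n(Y^R) = \tau_{n-1}(Y^R)$-type relations align the truncation depths as one descends. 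Third, once $X'$ is replaced by $\tau_n(X')$, note $\tau_n(X') \in \{\tau_n(Z) : Z \in \cl(\A)\}$, so there is some $W \in T_n(\U)$ with $W \leq \tau_n(X')$ in the absolute order; since $\geq$ respects addition and outcomes are monotone, $o(G + \tau_n(X')) \geq o(G + W) \geq \N$ by hypothesis. Chaining these gives $o(G + X') \geq \N$.

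\textbf{The main obstacle.}
The delicate point is the truncation lemma, specifically making precise and correct the bookkeeping that ``$Y$ never runs out before $G$ does.'' One has to be careful that in a sum $G + Y$ the total game can in principle continue inside $Y$ after $G$ is exhausted — but $Y$ being a Left \emph{dead-end} means that once the $G$-component is a Left end, Right moving in $Y$ only produces further Left dead-ends, and the outcome of $G^{\text{end}} + Y'$ is governed by whether Right can exhaust everything; the claim $\fbirthday(G^{\text{end}}) + (\text{depth used in } Y) \leq n$ needs the invariant that we descend in $G$ and $Y$ in a coordinated way. I would formalize this via the auxiliary claim: for every subposition $G'$ of $G$ and every $Y$, $o(G' + Y) = o\bigl(G' + \tau_m(Y)\bigr)$ where $m = \fbirthday(G')$, proved by simultaneous induction on $\fbirthday(G') + \fbirthday(Y)$, using $\tau_m(Y)$'s options being truncations at depth $m-1$. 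Everything else (the end-of-universe reductions, monotonicity, the finiteness of $T_n(\U)$ already noted after Definition~\ref{definition:testset}) is routine.
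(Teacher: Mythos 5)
Your ``if'' direction is essentially the paper's own argument: your truncation implication ($o(G+\tau_n(Y)) \geq \N \Rightarrow o(G+Y) \geq \N$) is exactly the contrapositive of Lemma~\ref{lemma:de-truncation}, and the reduction of an arbitrary Left end of $\U$ to $\cl(\A)$, followed by minimality of $T_n(\U)$ and monotonicity of outcomes, is precisely how the paper concludes. The problems are in your two supporting claims. First, your justification of the ``only if'' direction---that every element of $T_n(\U)$ lies in $\U$ because $\U$ is hereditarily and dicotically closed---is false: truncation is neither a follower operation nor an instance of dicotic closure (clause (iv) cannot create Left ends), and by Proposition~\ref{proposition:nonewends} the Left ends of $\D(\A)$ are exactly $\cl(\A)$. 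Concretely, take $\U = \D(\bar{2}0)$: then $\tau_2(\bar{2}0) = \bar{1}0$, which is minimal among the truncations at level $2$, so $\bar{1}0 \in T_2(\U)$; but the terminable Left ends of $\U$ are only $\bar{2}0$ and $\bar{1}$, so $\bar{1}0 \notin \U$ (and one checks $\bar{1}0 \not\geq$ any Left end of $\U$, so you cannot fall back on domination either). Hence this direction is not a formality: it requires the \emph{reverse} implication of the truncation lemma, $o(G+X) \geq \N \Rightarrow o(G+\tau_n(X)) \geq \N$ for $n \geq \fbirthday(G)$, applied to $X \in \cl(\A) \subset \U$. That implication is true and follows by the same two-ply induction as Lemma~\ref{lemma:de-truncation}; you gesture at it (``one expects equality of outcomes'') but never deploy it where it is actually needed. (The paper, for what it is worth, also waves this direction through as ``immediate.'')

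Second, the auxiliary invariant you propose to carry through the induction, $o(G'+Y) = o(G'+\tau_m(Y))$ with $m = \fbirthday(G')$, is false as an equality of outcome classes: truncation can change who wins when \emph{Right} moves first. For instance $G' = 1$, $Y = \bar{2}$, $m = 1$: $o(1+\bar{2}) = \sL$, but $o(1+\tau_1(\bar{2})) = o(1+\bar{1}) = \N$, since Right moving first wins $1+\bar{1}$ by moving to $1$. Only the Left-moving-first half (``$\geq \N$'' iff ``$\geq \N$'', equivalently ``$\leq \P$'' iff ``$\leq \P$'') survives, and that is exactly the statement of Lemma~\ref{lemma:de-truncation} (in one direction). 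So the induction must be run on this weaker statement, and it is naturally a two-ply argument---pair Left's move in the $G$-component with Right's reply, which lands either back in the $G$-component, or in $Y$ at truncation depth one lower, or in the Right-end case---as in the paper's proof; your sketch, as written, would be attempting to prove a false invariant.
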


The intuition behind Theorem~\ref{theorem:de-truncation} is that, in any play of $G + X$, Left is constrained to play on~$G$. Therefore, if Left can arrange to leave herself without a move on $G + X$, she can do so in at most $\fbirthday(G)$ moves; thus Right will make at most $\fbirthday(G)$ moves on~$X$. First a lemma:

\begin{lemma}
\label{lemma:de-truncation}
Let $G$ be any partizan game and let $X$ be a Left dead-end. If $o(G + X) \leq \P$, then also $o(G + \tau_n(X)) \leq \P$, for all $n \geq \fbirthday(G)$.
\end{lemma}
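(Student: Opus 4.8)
The plan is to prove Lemma~\ref{lemma:de-truncation} by induction on $\fbirthday(G)$, with a carefully chosen induction hypothesis that tracks the interaction between a play on $G+X$ and the truncation level. The statement to prove is that if $o(G+X) \leq \P$ (so the player moving first on $G+X$ loses, i.e.\ Left has no good first move and Right has no good first move), then $o(G + \tau_n(X)) \leq \P$ for all $n \geq \fbirthday(G)$. The intuition in the text is the right one: since $X$ is a Left dead-end, every subposition of $X$ is a Left end, so Left never moves inside the $X$-component; hence in any line of play on $G+X$, only Right ever touches $X$, and Right can make at most $\fbirthday(G)$ moves total before $G$ is exhausted. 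So truncating $X$ at depth $\fbirthday(G)$ removes only parts of $X$ that can never be reached.

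First I would nail down the base case: if $\fbirthday(G) = 0$ then $G \cong 0$, $\tau_0(X) = 0$, and $o(0 + X) \leq \P$ gives $o(X) \leq \P$; but $X$ is a Left end so $o(X) \leq \P$ already forces $X \cong 0$ (a nonzero Left end has $o \geq \N$ from the Right-end-free structure — more carefully, by the Corollary after Theorem~\ref{theorem:de-comparison}, $0 \not\geq X$, but what we need is just that $\tau_0(X)=0$ and $o(G+0) = o(0) = \P \leq \P$, which is immediate regardless). Actually the base case is trivial: $\tau_0(X) = 0$ and $o(G + 0) = o(G) = o(G+0)$... no — we need $o(G+\tau_0(X)) \leq \P$, i.e.\ $o(G) \leq \P$; and since $G \cong 0$ here, $o(G) = \P$. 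Good.

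For the inductive step, assume $\fbirthday(G) = m+1 \geq 1$ and $n \geq m+1$, so $n \geq 1$ and $\tau_n(X) = \combgame{\{\cdot \mid \tau_{n-1}(X^R)\}}$ (using that $X$ is a Left end, so $\tau_n(X)$ has only Right options). I must show $o(G + \tau_n(X)) \leq \P$, i.e.\ that whoever moves first loses. Consider a first move by Left: it must be in $G$ (since $\tau_n(X)$ is a Left end), giving $G^L + \tau_n(X)$. Since $o(G+X) \leq \P$, Left's move to $G^L + X$ was losing, so $o(G^L + X) \geq \N$ — wait, that's the wrong direction; $o(G+X)\leq\P$ means every Left option $G^L+X$ of $G+X$ satisfies $o(G^L+X) \geq \N$, i.e.\ Right (to move) wins $G^L+X$. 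Hmm, that tells me about Right winning $G^L + X$, and I want Right winning $G^L + \tau_n(X)$ — but $\fbirthday(G^L) \leq m$ and $n-1 \geq m$... except the lemma hypothesis on the sub-instance needs $o(G^L + X) \leq \P$, which I don't have. So a direct induction on the lemma as stated does not close; I need a stronger statement. The fix: strengthen to a two-directional/symmetric claim or, better, prove by induction the statement \emph{for a fixed winner}: ``if Right (moving first or second, as appropriate) wins $G+X$, then Right wins $G+\tau_n(X)$,'' tracking outcomes rather than just $\leq \P$. Concretely I would prove: \textbf{(I)} if $o(G+X) \leq \P$ then $o(G+\tau_n(X)) \leq \P$ for $n \geq \fbirthday(G)$; and simultaneously \textbf{(II)} if $o(G+X) \leq \N$ (Right, moving first, wins) then $o(G+\tau_n(X)) \leq \N$ for $n \geq \fbirthday(G)$ — note when $G$ is a Left end these need the proviso-type care, but the key point is that Left's moves stay in $G$ and Right's winning responses, being in $X$, only ever reach depth $\leq \fbirthday(G)$. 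The mutual induction on $\fbirthday(G)$ then closes: for (I), a Left move to $G^L+\tau_n(X)$ is answered by invoking (II) on $o(G^L + X) \leq \N$ with $n - 1 \geq \fbirthday(G^L)$; a Right move either goes to $G^R + \tau_n(X)$ (use (I)/(II) on $G^R$) or to $G + \tau_{n-1}(X^R)$, matching Right's winning move to $G + X^R$ in the original — here I use that $X^R$ is again a Left dead-end with $\fbirthday(X^R)$ unconstrained but the relevant instance is $o(G + X^R) \leq \P$ with truncation level $n - 1 \geq \fbirthday(G) \geq \fbirthday(G)$, and $X^R$ being shorter doesn't matter since we truncate it to $n-1 \geq \fbirthday(G)$.

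The main obstacle, then, is exactly this: the lemma as literally stated is not self-strengthening under induction, because passing to $G^L$ flips the outcome to the ``$\leq \N$'' side, which the hypothesis does not cover. The real work is formulating the correct mutually-recursive pair of claims (outcomes $\leq \P$ and $\leq \N$, and their conjugate-free analogues since everything here is a Left dead-end so no $\geq$-side proviso intrudes) and checking that every case of a move — Left in $G$, Right in $G$, Right in the truncated $X$ — is answerable by the appropriate clause at a smaller value of $\fbirthday(G)$ or $n$. Once the bookkeeping of which clause to invoke in each of the three move-types is set up correctly, each individual case is a one-line appeal to the induction hypothesis together with the fact that $\tau$ commutes with taking Right options of a Left end and that $\fbirthday(G^L), \fbirthday(G^R) < \fbirthday(G)$.
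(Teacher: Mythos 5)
Your core idea is correct and is essentially the paper's own argument: answer Left's move in the truncated game by mirroring Right's winning reply in the untruncated game, letting the truncation level drop only when that reply is played in~$X$. Two remarks, though. First, your diagnosis that ``the lemma as literally stated is not self-strengthening under induction'' overstates the problem: the paper proves exactly the stated $\leq\P$ claim by a single induction whose step is simply two plies deep. Since $o(G+X) \leq \P$ forces $G$ not to be a Left end, one need only show $o(G^L + \tau_n(X)) \leq \N$ for each~$G^L$; and from $o(G^L+X) \leq \N$, either some $o(G^{LR}+X) \leq \P$ (apply the induction hypothesis at level~$n$), or some $o(G^L+X^R) \leq \P$ (apply it at level $n-1 \geq \fbirthday(G^L)$, noting $\tau_{n-1}(X^R)$ is a Right option of $\tau_n(X)$), or else $G^L+X$ is a Right end, in which case so is $G^L+\tau_n(X)$ and Right wins outright. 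Your mutual induction with the companion claim (II) is this same case analysis, with the inner step packaged as a separate statement rather than inlined.

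Second, a caveat about (II) as you state it ($o(G+X) \leq \N$ implies $o(G+\tau_n(X)) \leq \N$ for all $n \geq \fbirthday(G)$): at the boundary $n = \fbirthday(G)$, the subcase in which Right's winning reply lies in $X$ would require invoking (I) at level $n-1 < \fbirthday(G)$, which your induction hypothesis does not supply, so the mutual induction as literally set up does not close there. This is harmless for your plan, because every instance of (II) you actually invoke has the slack $n \geq \fbirthday(G^L)+1$ (you note $n-1 \geq \fbirthday(G^L)$ yourself); just state (II) with that strict inequality, or inline the reply as the paper does. Finally, your bookkeeping of move-types omits the degenerate case in which Right wins $G^L+X$ moving first because he has no move at all ($G^L+X$ a Right end); then $X \cong 0$, so $\tau_n(X) = 0$ and $G^L+\tau_n(X)$ is again a Right end, and Right still wins.
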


\begin{proof}
Certainly $G$ cannot be a Left end (otherwise $o(G + X) \geq \N$ would be automatic), so it suffices to show every $o(G^L + \tau_n(X)) \leq \N$, i.e., Right wins moving first on $G^L + \tau_n(X)$.

But $o(G^L + X) \leq \N$, so either $o(G^{LR} + X) \leq \P$, or $o(G^L + X^R) \leq \P$, or else $G^L + X$ is a Right end. In the first case, $o(G^{LR} + \tau_n(X)) \leq \P$ by induction. In the second case, we have $o(G^L + \tau_{n-1}(X^R)) \leq \P$ by induction, since $\fbirthday(G^L) \leq n-1$; but $\tau_{n-1}(X^R)$ is a Right option of $\tau_n(X)$. Finally, if $G^L + X$ is a Right end, then so is $G^L + \tau_n(X)$.
\end{proof}

\begin{proof}[Proof of Theorem~\ref{theorem:de-truncation}]
The forward direction is immediate. For the converse, suppose (for contrapositive) that Left is not \U-strong. Then there is some Left dead-end $X \in \U$ with $o(G + X) \leq \P$. By Lemma~\ref{lemma:de-truncation}, $o(G + \tau_n(X)) \leq \P$. This proves the Theorem, since $Y \leq \tau_n(X)$ for some $Y \in T_n(\U)$.
\end{proof}

Thus in Theorem~\ref{theorem:lnsequiv}, we may substitute the finite test from Theorem~\ref{theorem:de-truncation} in place of \U-strongness, and in particular:

\begin{corollary}
\label{corollary:computable}
If the sequence $n \mapsto T_n(\U)$ is computable, then so is~$\geq_\U$.
\end{corollary}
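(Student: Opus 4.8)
The plan is to reduce computability of $\geq_\U$ to the computability of $n \mapsto T_n(\U)$ by tracing through Theorem~\ref{theorem:lnsequiv} recursively on formal birthdays. First I would observe that the maintenance conditions (a)--(b) in Theorem~\ref{theorem:lnsequiv} are decidable whenever $\geq_\U$ is decidable on games of strictly smaller formal birthday, since they involve only finitely many quantifiers over options $G^L$, $G^R$, $G^{RL}$, $H^{LR}$, each of which has formal birthday strictly less than $\max(\fbirthday(G),\fbirthday(H))$. So an induction on $\fbirthday(G)+\fbirthday(H)$ is the natural engine: to decide $G \geq_\U H$, assume recursively that $\geq_\U$ has been decided for all pairs of smaller total formal birthday, check (a) and (b) by brute-force enumeration of options, and then handle the proviso (c)--(d).

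The crucial point is that the proviso, which a priori quantifies over the infinite set of Left ends (resp.\ Right ends) of $\U$, is made effective by Theorem~\ref{theorem:de-truncation}. Condition (c) asks whether $G$ is Left $\U$-strong; by Theorem~\ref{theorem:de-truncation} this holds if and only if $o(G + X) \geq \N$ for all $X \in T_n(\U)$ with $n = \fbirthday(G)$. Since $T_n(\U)$ is a \emph{finite} set (it lives inside $\mathcal{M}_n$) and is computable by hypothesis, and since the outcome function $o(G+X)$ is computable for concrete games $G+X$, condition (c) becomes a decidable check. Condition (d) is handled symmetrically using conjugates: $G$ being a Right end and $H$ being Right $\U$-strong is equivalent, via $X \mapsto \overline{X}$ and Theorem~\ref{theorem:de-truncation} applied to $\overline{H}$, to $o(H + Y) \leq \N$ for all $Y$ in the (computable) finite test set for Right ends, which is just $\{\overline{X} : X \in T_{\fbirthday(H)}(\U)\}$.

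Assembling these pieces: given $G, H$, the algorithm computes $\fbirthday(G)$ and $\fbirthday(H)$, recursively decides the finitely many smaller instances of $\geq_\U$ needed for (a) and (b), queries the oracle for $T_{\fbirthday(G)}(\U)$ and $T_{\fbirthday(H)}(\U)$, and evaluates finitely many outcome classes for (c) and (d). Each step terminates, and the recursion is well-founded because every recursive call strictly decreases total formal birthday; hence $\geq_\U$ is computable. The one place requiring slight care — and the only real obstacle — is confirming that the recursion in conditions (a)--(b) genuinely descends: the calls $G^{RL} \geq_\U H$ and $G \geq_\U H^{LR}$ decrease the birthday of one argument by two while leaving the other fixed, so the total $\fbirthday(G)+\fbirthday(H)$ does strictly decrease, and likewise for $G^R \geq_\U H^R$ and $G^L \geq_\U H^L$; once this is checked the argument is routine. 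I would also note in passing that the final sentence of the corollary's surrounding discussion — that $\geq_\U$ is computable for any dead-ending $\U$ finitely generated over $\D$ — then follows because finitely generated universes have finitely generated upward closures (by Proposition~\ref{prop:de-fbirthday}) and one can compute $T_n$ by enumerating $\tau_n$ of elements of $\cl(\A)$ and extracting minimal elements under the absolute order, which is itself decidable by Theorem~\ref{theorem:de-comparison}.
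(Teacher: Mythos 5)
Your proposal is correct and follows the paper's own route: the paper proves this corollary precisely by substituting the finite test of Theorem~\ref{theorem:de-truncation} (over the computable test sets $T_n(\U)$, with Right ends handled by conjugation) for the $\U$-strongness conditions in Theorem~\ref{theorem:lnsequiv}, with the recursion grounded exactly as you describe. Your explicit induction on total formal birthday and the termination check for the maintenance conditions are just the details the paper leaves implicit.
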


\begin{example}
Let $\U = \D(\bar{1})$. Then $G$ is Left \U-strong if and only if $o(G + \bar{n}) \geq \N$, for all $n \leq \fbirthday(G)$.

There is no avoiding the fact that we need to perform $n$ separate strongness tests. For example, if $G = \bar{n}^\circ$, then $o(G + \bar{k}) \geq \N$ for all $k \neq n$, but ${o(G + \bar{n}) = \P}$.
\end{example}

\begin{corollary}
If $\U \subset \E$ is finitely generated, then $\geq_\U$ is computable.
\end{corollary}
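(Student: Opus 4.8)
The plan is to reduce the claim to Corollary~\ref{corollary:computable} by showing that the sequence $n \mapsto T_n(\U)$ is computable whenever $\U = \D(\A)$ for a finite set $\A$ of Left dead-ends. Since each $T_n(\U)$ is by definition the set of minimal elements (under the absolute ordering) of $\{\tau_n(G) : G \in \cl(\A)\}$, and absolute comparison of Left dead-ends is decidable by Theorem~\ref{theorem:de-comparison} (a finite recursion on the game trees), it suffices to exhibit, for each $n$, a finite and effectively computable set $S_n \subset \mathcal{L}$ with $\{\tau_n(G) : G \in \cl(\A)\} \subset \up(\{0\} \cup S_n)$ in the sense that every $\tau_n(G)$ dominates (absolutely) some element of $S_n$; then $T_n(\U)$ is recovered by computing the minimal elements of $S_n \cup \{0\}$ and discarding non-minimal ones, all of which is a finite computation.

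First I would record that elements of $\cl(\A)$ have the form $G \cong H_1 + \dots + H_k$ with each $H_i \in \A \cup \bar\A$ (by hereditary and additive closure, and since $\A$ is a set of Left dead-ends all of whose subpositions are Left ends, the conjugates contribute Right dead-ends; actually since we only care about Left dead-ends among these sums, and a sum is a Left dead-end iff every summand is, the relevant $G$ are sums of elements of $\A$ alone). Next, the key structural input is that truncation interacts well with addition and with the absolute ordering: I claim $\tau_n(G + H) \geq \tau_n(G) + \tau_n(H)$ is false in general, but what \emph{is} true and sufficient is that $\tau_n$ is monotone under the pruning/replacement operations bounding formal birthday, combined with Proposition~\ref{prop:de-fbirthday}: if $G$ is a Left dead-end then $\tau_n(G)$ depends only on the truncation of the game tree to depth $n$, so $\tau_n(H_1 + \dots + H_k)$ is determined by the depth-$n$ truncations of finitely many summands, and each summand $\tau_n(H_i)$ has formal birthday $\leq n$, hence lies in the finite set $\mathcal{L}_n$. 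The crucial observation is that in a sum of Left dead-ends, Right moves component-wise, so $\tau_n(H_1 + \dots + H_k)$ absolutely dominates $\tau_n(H_{i_1} + \dots + H_{i_j})$ for any sub-multiset, which lets us bound the number of distinct summands that matter: since each nonzero $H_i$ has formal birthday $\geq 1$ and $\tau_n$ kills everything below depth $n$, only sums with at most $n$ nonzero summands produce truncations not already dominated by smaller sums — no, more carefully, a sum of $n+1$ nonzero dead-ends still has a nonzero depth-$n$ truncation. The right bound comes instead from: $T_n(\U) = T_n(\A)$ and the minimal truncations are realized by sums $\overline{n_1}0 + \dots$ of bounded total formal birthday, specifically by elements of $\cl(\A)$ of formal birthday $\leq n$ (truncating a larger element only moves \emph{up} in the order, so minimal truncations come from elements already of formal rank $\leq n$). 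Thus $T_n(\A) \subset \{G \in \cl(\A) : \fbirthday(G) \leq n\}$ up to absolute equality, and this latter set is finite (finitely many summand-types from the finite set $\A \cup \bar\A$, each of bounded size, and the total formal birthday additive by the Proposition on $\birthday(G+H)$) and effectively enumerable.

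Assembling the algorithm: on input $n$, enumerate all sums $H_1 + \dots + H_k$ with $H_i \in \A$ (a finite explicit set) and $\sum \fbirthday(H_i) \leq n$ — a finite list; form the truncations $\tau_n$ of each together with $\tau_n$ of a witness of $\E$-minimality if needed, i.e. just these sums and $0$; then compute pairwise absolute comparisons via Theorem~\ref{theorem:de-comparison} and extract the minimal elements. This produces $T_n(\U)$, so $n \mapsto T_n(\U)$ is computable, and Corollary~\ref{corollary:computable} gives that $\geq_\U$ is computable.

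\textbf{Main obstacle.} The delicate step is justifying that it suffices to range over elements of $\cl(\A)$ of bounded formal birthday — i.e. that truncating elements of large formal rank cannot produce a minimal element of $\{\tau_n(G) : G \in \cl(\A)\}$ not already obtained from an element of formal rank $\leq n$. This should follow from a monotonicity lemma of the shape: if $\fbirthday(G) > n$ then $\tau_n(G) \geq \tau_n(G')$ absolutely for some $G' \in \cl(\A)$ with $\fbirthday(G') < \fbirthday(G)$, obtained by pruning one deepest branch of a summand — using Lemma~\ref{lemma:de-subset} (a Left dead-end with fewer Right options is larger) at the node where the pruning occurs, and closure of $\cl(\A)$ to see $G'$ stays in the class. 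Pinning down that pruning operation and checking it commutes correctly with $\tau_n$ is the one place real care is needed; everything else is bookkeeping over finite sets.
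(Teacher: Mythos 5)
Your reduction to Corollary~\ref{corollary:computable} is the right move, but the step you yourself flag as the main obstacle --- that the minimal elements of $\{\tau_n(G) : G \in \cl(\A)\}$ are already realized by elements of $\cl(\A)$ of formal birthday $\leq n$ --- is not just delicate, it is false, so the proposed algorithm computes the wrong test set. Concretely, take $\A = \{\bar{2}0\}$ (so $\U = \D(\bar{2}0)$ is finitely generated and dead-ending) and $n = 3$. Then $\tau_3(\bar{2}0 + \bar{2}0) = \combgame{\{\cdot|\bar{2},\bar{1}0\}} = (\bar{1}0)_\sh$, whereas the elements of $\cl(\A)$ of formal birthday $\leq 3$ are, up to absolute equality, $0,\ \bar{1},\ \bar{2},\ \bar{3},\ \bar{2}0$; and $(\bar{1}0)_\sh$ is $\geq$ none of these (each check reduces to $\bar{1}0 \geq 0$, $\bar{1}0 \geq \bar{1}$, or $\bar{1}0 \geq \bar{2}$, all of which fail since $0$ is incomparable with nonzero dead-ends). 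So $T_3(\U)$ contains a minimal element lying below $(\bar{1}0)_\sh$ and hence not equal to any bounded-birthday element of $\cl(\A)$; your enumeration misses it, and the resulting strongness test is no longer justified by Theorem~\ref{theorem:de-truncation}. The underlying reason your repair strategy cannot work is that truncation of a \emph{sum} can create values (here $\bar{1}0$ and $(\bar{1}0)_\sh$) that are not values of any element of $\cl(\A)$ at all, and pruning a deep branch of a summand does not keep you inside $\cl(\A)$, so Lemma~\ref{lemma:de-subset} has nothing in the class to compare against. (A secondary slip of the same flavor: summing only $H_i \in \A$ rather than followers already fails, e.g.\ for $\A = \{\bar{2}\}$, $n=1$, where the test set must contain $\bar{1}$.)

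The paper's proof sidesteps all of this: it never bounds which elements of $\cl(\A)$ need truncating. Instead it uses the identity $\tau_n(G+H) = \tau_n(\tau_n(G) + \tau_n(H))$, which shows that the entire set $\{\tau_n(G) : G \in \cl(\A)\}$ can be generated by starting from the truncations of the generators (and their followers) and iteratively truncating pairwise sums until a closure is reached; termination is automatic because everything lives in the finite set of games of formal birthday $\leq n$. The minimal elements of that closed set are then $T_n(\U)$, and Corollary~\ref{corollary:computable} finishes the argument. If you want to salvage your write-up, replace the bounded-birthday enumeration with this fixed-point computation; the rest of your framing (decidability of absolute dead-end comparison via Theorem~\ref{theorem:de-comparison}, extraction of minimal elements, appeal to Corollary~\ref{corollary:computable}) is fine.
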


\begin{proof}
Let $\A$ be a finite set of Left dead-ends with $\U = \D(\A)$.
Now certainly $\tau_n(G + H) = \tau_n(\tau_n(G) + \tau_n(H))$ for all $G$ and~$H$, so the test set $T_n(\A)$ can be computed by iteratively taking sums and truncating, until a closure is formed (which must happen eventually, since $T_n(\A)$ is finite).
\end{proof}

\subsection*{Games Born by Day 2}

We will discuss simplest forms modulo the general dead-ending universe in the next section. However, we already have enough theory to brute-force a computation of games born by day 2 in each of the dead-ending universes discussed so far. There are $232$ nonisomorphic dead-ending game trees born by day~2; thus we can brute-force the number of \emph{distinct} games born by day 2 with at most $232^2$ comparisons.

The results are summarized in Figure~\ref{figure:de-day2}.
\begin{figure}
\begin{center}
\begin{tabular}{c|cccr}
$\U$ & $\mathcal{E}$ & $\mathcal{D}(\bar{1}0)$ & $\mathcal{D}(\bar{1})$ & \multicolumn{1}{c}{$\mathcal{D}$} \bigstrut \\
\hline
Total game trees in \U & 232 & 232 & 230 & 10 \bigstrut[t] \\
Distinct games in $\mathcal{U}$ (mod \U) & 196 & 196 & 194 & 9 \bigstrut[t]
\end{tabular}
\end{center}
\caption{Distinct games born by day 2 in various dead-ending universes. \label{figure:de-day2}}
\end{figure}
For each universe $\D$, $\D(\bar{1})$, $\D(\bar{1}0)$, and~$\E$, we show the number of game trees of height~2 in that universe, and the number that are distinct modulo~\U. The numbers for $\D(\bar{1}0)$ and $\E$ are identical, which is expected: $\D(\bar{1}0)$ already contains all the rank~2 dead-ends in~\E. Indeed, \emph{every} dead-ending universe $\U \neq \D$ will look exactly like $\D(\bar{1})$ or $\D(\bar{1}0)$ on day~2, depending only upon whether $\bar{1}0 \in \U$.

Dorbec et al~\cite{DRSS15} showed that there are exactly 1268 distinct elements of $\D$ born by day~3 and 7541 distinct games in $\mathcal{M}$ (mod \D) born by day~3. Obtaining exact day-3 counts for larger universes is presumably hopeless, but it may be interesting to investigate bounds on such counts.

\begin{openproblem}
Obtain bounds on the number of games born by day~3 modulo various dead-ending universes.
\end{openproblem}

\section{Simplest Forms in Mis\`ere Universes}
\label{section:invariant-forms}

We now consider the question of \textbf{simplest} or \textbf{canonical forms} modulo a general universe. We will shortly introduce a general notion of simplest form that works for any universe~$\U$ (including when $\U$ is not dead-ending), and that is computable whenever $\geq_\U$ is computable (and in particular, for any $\U \subset \E$ that is computable in the sense of Corollary~\ref{corollary:computable}). In order to motivate our approach, we first survey the previous developments in this area.

The idea of a simplest form for a game $G$ was introduced most famously, and most successfully, in the normal-play partizan theory of Berlekamp, Conway, and Guy~\cite{BCG01}. The dramatic success of the classical theory naturally led to a desire to seek out simplest forms in other contexts. Not long after the discovery of the normal-play theory, Conway showed that impartial games also admit simplest forms in mis\`ere play~\cite{Con01}. By combining these two classical results, the present author later extended them to partizan games in mis\`ere play~\cite{Sie15pmcf}.

In each of these cases, it can be shown that each game $G$ admits a unique \textbf{simplest form}~$K$, whose game tree has strictly fewer edges than any other form of~$G$. Although the details vary, the essence of the proof is the same in each case: dominated options can be eliminated; reversible options can be bypassed; and a game with neither must be in simplest form.

\subsection*{Simplest forms in \D}

In mis\`ere universes other than~$\mathcal{M}$, the situation becomes less clear. It is still true (as we shall see) that a game with no dominated or reversible options must be in simplest form. The trouble is that in the general universe, there may be reversible options that cannot be bypassed, in the specific case where they reverse \emph{through} an end. That this situation is problematic has been understood, in various guises, at least since Ettinger's work in the mid-1990s~\cite{Ett96}. Following Ettinger, we will call a Left option \textbf{atomic reversible} if it reverses through a Left end; \textbf{ordinary reversible} if it reverses through a game that is not a Left end.

For a typical example, consider the dicot universe~$\D$ and the game
\[
G = \combgame{\{0,\cgstar|\cgstar\}}.
\]
It is easy to check that $G \geq_\D 0$ (using Theorem~\ref{theorem:dicot-comparison}, say), so that Left's option $G^L = \cgstar$ is reversible through~$0$, a Left end. But bypassing it would yield the ``reduction''
\[
G' = \combgame{\{0|\cgstar\}},
\]
which fails to satisfy the proviso: $o(G') \not\geq \N$, so $G' \not\geq_\D 0$ (again cf.~Theorem~\ref{theorem:dicot-comparison}). The original game $G$ is \D-strong, but $G'$ is not, so the Left option $\cgstar$ cannot be removed; it is the only witness to the fact that $G$ is \D-strong.

In \cite{DRSS15} it is shown that in~\D, one can simply (and, in hindsight, somewhat magically) power through these difficulties. The main idea is that, rather than \emph{bypass} an atomic reversible~$G^L$, one can simply \emph{replace} it by a simpler reversible option. For an example of this procedure, consider the game
\[
H = \combgame{\{0,\{\cgstar||0,\{\cgstar|0\}\}|||\cgstar\}}.
\]
It looks like a mess, but the point is that Left's option $H^L = \combgame*{\{\cgstar||0,\{\cgstar|0\}\}}$ reverses through~$0$. It cannot be eliminated, for the same reason as before, but it \emph{can} be replaced by the simpler reversible option~$\cgstar$, so that in fact $H \equiv_\D G$. This procedure is perhaps visualized most easily in terms of the game trees of $G$ and~$H$, as in Figure~\ref{figure:dicot-reduction}.

\begin{figure}
\centering
\cgtree[unit=0.5cm]{\bullet(+\bullet(\cgstar(0|0)|0\bullet(\cgstar(0|0)|0))0|+++\cgstar(0|0))}
\raisebox{3cm}{$\qquad \Longrightarrow \qquad$}
\raisebox{2cm}{\cgtree[unit=0.5cm]{\bullet(+\cgstar(0|0)0|+\cgstar(0|0))}}
\caption{\label{figure:dicot-reduction} A typical example of dicot reduction.}
\end{figure}

Now since $0$ is the only end in~\D, any $G$ with an atomic reversible $G^L$ must necessarily satisfy $G \geq_\D 0$. It follows\footnote{Some effort is required to work out the details; see~\cite{DRSS15}.} that \emph{any} such $G^L$ can be replaced by~$\cgstar$. Because $G^L$ is reversible, it will never contribute to the maintenance property in comparisons involving~$G$: thus the ``fine structure'' of $G^L$ is immaterial.

Simplest forms for~$\D$ are thus obtainable: we must \emph{eliminate} dominated options; \emph{bypass} ordinary reversible ones; \emph{replace} atomic reversible options by~$\cgstar$; and, finally, \emph{erase} extraneous instances of~$\cgstar$ (those that are reversible, but are not actually necessary in order to ensure that $G$ is \D-strong). It turns out that if $G \in \D$ is fully reduced, in the sense that none of these four operations are possible, then $G$ is indeed in simplest form (modulo~\D).

This shows that the situation for $\D$ is almost as nice as for the preceding cases. There is just one minor, but still niggling, quibble. In all the preceding cases (normal play, impartial mis\`ere, full mis\`ere) the simplest forms obtained are maximally information-dense: every edge of the game tree is indispensable; there is some compound in which it is the \emph{only} winning move. But consider the Left option $G^L = \cgstar$ in the game $G = \combgame{\{0,\cgstar|\cgstar\}}$. Its \emph{Right} option $G^{LR} = 0$ is necessary for the proviso, but it also has a \emph{Left} option $G^{LL} = 0$. This Left option serves no combinatorial purpose; it is only necessary to ensure that $G \in \D$.

At first this seems a pedantic objection---one extraneous edge!---but it is nonetheless unsettling, and it is a harbinger of greater difficulties to come.

\subsection*{Simplest forms in \E}

Recent work of Larsson et al \cite{LMNRS} has extended the preceding results to obtain canonical forms for the full mis\`ere universe~\E. The procedure is the same as for~\D, but with atomic reversible Left options replaced by games of the form $\combgame{\{\cdot|M_n\}}$, rather than~$\cgstar$.

To see that this works, suppose that $G$ has an atomic reversible~$G^L$, so that $G \geq_\E G^{LR}$ for some Left end~$G^{LR}$. Then it must be the case that $G \geq_\E M_n$ for some~$n$, whereupon $\combgame{\{\cdot|M_n\}}$ suffices as a replacement for~$G^L$. We can ensure the reduction is canonical by giving preference to the \emph{minimal} such~$n$.

As an example, consider the game
\[
H = \combgame{\{0,\{0,\bar{1}|M_5\}||\bar{2},0\}}.
\]
It is not hard to show that $H \geq_\E M_5$, so that Left's option $\combgame{\{0,\bar{1}|M_5\}}$ is reversible. It follows that $H$ is equal to
\[
H' = \combgame{\{0,\{\cdot|M_3\}||\bar{2},0\}},
\]
which turns out to be the canonical form of~$H$, in the sense defined above.

Observe the subtle change of wording: in the preceding cases, we spoke of the ``simplest'' form for~$G$; in the case of \E, a ``canonical'' form. Often in the literature these terms have been used interchangeably, but there is a key difference. The form $H'$ is indeed canonical, in the sense that applying the same operations to any other form of $H$ will yield an identical result. But it is not, in fact, simplest: the following game, whose tree has one fewer edge than~$H'$, is also equal to~$H$.
\[
H'' = \combgame{\{0,\{\cdot|\bar{2}0\}||\bar{2},0\}}.
\]

Moreover, the problem of ``extraneous edges'' that we observed in the case of $\D$ has become much worse. The game tree for $M_3$ (say) has 5 edges---a cumbersome way of expressing a single bit of information (``Yes, $H$~is Left \E-strong''). In order to satisfy the maintenance property, it would suffice to eliminate $H^L$ entirely; in order to satisfy the proviso, it would suffice to replace $H^L$ by $\bar{1} = \combgame{\{\cdot|0\}}$. But in order to satisfy both at the same time, we are forced into the uneasy compromise of $\combgame{\{\cdot|M_3\}}$. Thus a substantial part of the canonical form of~$H$ (the subtree~$M_3$) serves no combinatorial purpose.

\subsection*{A More Complicated Universe}

It is instructive to consider one further example, which shows that in the general universe the situation is still worse. Let
\[
A = 0\bar{1}\bar{4}\bar{5}; \qquad B = 0\bar{2}\bar{3}\bar{5}; \qquad \mathcal{U} = \D(A,B),
\]
and
\[
G = \combgame{\{0,\{\cdot|A\},\{\cdot|B\}||0,\bar{5}\}}.
\]
It is easily seen that $G$ is \U-strong (in fact \E-strong): if $X \in \E$ is a Left dead-end, then from $G + X$ Left has a winning move to $\combgame{\{\cdot|A\}} + X$ (or $\combgame{\{\cdot|B\}} + X$).

So we have $G \geq_\U A$ and $G \geq_\U B$ (the maintenance property is easily verified), and both of the options $\combgame{\{\cdot|A\}}$ and $\combgame{\{\cdot|B\}}$ are reversible through a Left end. We can therefore eliminate either of them---but not both, for then $G$ would cease to be \U-strong. \emph{So, which one?} In order to define ``canonical forms'' in \U, we must specify a replacement game for atomic reversible Left options, analogous to $M_n$ in the preceding example.

But $A$ and $B$ have the same birthday and number of options, and their game trees are identical in size. There is nothing that really distinguishes them in any way that could meaningfully be called ``canonical''. Nor will anything else in $\U$ suffice as a replacement game, since $A$, $B$, and $\bar{1}$ are the only terminable Left ends in~\U.

Examples like this suggest that even the notion of ``canonical form'' is suspect in the general universe. It is conceivable that we could press forward and contrive some distinction between $A$ and $B$ that makes one or the other ``canonical,'' but as we consider still more complicated universes, we will be forced into increasingly arbitrary choices.

\subsection*{A Path Forward}

The preceding examples suggest that for a general universe $\U$ and a $G \in \U$, we cannot expect to find a simplest form for~$G$---or even a ``canonical form'' in any meaningful sense---within~\U. It turns out, however, that we can obtain a simplest form $K$ for $G$ in a quite natural way, if we simply drop the expectation that $K \in \U$.

What, after all, are the essential properties of simplest forms, in those contexts where they arise? They are invariant of the form of~$G$; they respect addition; and they capture the exact information necessary to determine $o(G+X)$ for any $X \in \U$. It just so happens that in normal play and full mis\`ere, they \emph{also} reside within the universe $\U$ itself---which is quite convenient, but there is no reason to expect this to be true in general. Indeed, in many other contexts (e.g., the Smith--Fraenkel theory of loopy impartial games~\cite{Smi66}; the theory of stopper-sided games~\cite{BCG01}; mis\`ere quotients~\cite{Pla05}~\cite{PS08}), the ``simplest forms'' take some other shape, yet the theories work just fine.

We propose a similar approach here. We will define a notion of an \textbf{augmented game}~$G$, in which each subposition $H$ may have, in addition to its ordinary options $H^L$ and~$H^R$, an additional ``tombstone'' Left option that we denote by~$\Sigma^L$, and/or a ``tombstone'' Right option~$\Sigma^R$. The option $\Sigma^L$ functions as a ``generic witness'' to the fact that the subposition $H$ is Left \U-strong. It is a way of saying: ``it is important to preserve the fact that $H$ is strong, but the specific details of \emph{why} it is strong are not relevant to the value of~$H$''---because those details are atomic reversible.

In the expanded universe of augmented games, we can bypass reversible options just as we ordinarily would, but in the case where $G^L$ is atomic reversible, we must leave behind $\Sigma^L$ as a ``tombstone.'' In the example $G$ given above, the simplest form is then simply
\[
G = \combgame{\{0,\Sigma^L|0,\bar{5}\}}.
\]
The reversible options are replaced by the generic witness $\Sigma^L$.

We now prove that the games so obtained have all the properties one would expect of simplest forms.

\begin{definition}
The universe $\Mhat$ of \textbf{augmented games} is defined as follows.
\begin{align*}
\Mhat_0 &= \{0\} \\
\Mhat_{n+1} &= \Big\{\combgame{\{\mathscr{G}^L~|~\mathscr{G}^R\}} : \mathscr{G}^L \subset \Mhat_n \cup \big\{\Sigma^L\big\},\ \mathscr{G}^R \subset \Mhat_n \cup \big\{\Sigma^R\big\}\Big\} \\
\Mhat &= \bigcup_n \Mhat_n
\end{align*}
Here $\Sigma^L$ and $\Sigma^R$ are new symbols that are distinct from all elements of $\Mhat$ (and are not themselves elements of~$\Mhat$).
\end{definition}
Note that the only distinction between $\mathcal{M}$ and $\Mhat$ is the possible appearance of the tombstones $\Sigma^L$ and~$\Sigma^R$, and in particular, we have $\mathcal{M}_n \subset \Mhat_n$ for all~$n$. Also note that there are only two tombstone symbols (globally): $\Sigma^L$~denotes a single specific entity, which may or may not be present in any particular position.

We will now show that for every $G \in \U$, there is a unique simplest augmented game $K \in \Mhat$ with $G \equiv_\U K$. The construction is the same in all universes, differing only in the evaluation of~$\geq_\U$.

If $G \in \Mhat$, we say that $G$ \textbf{has $\Sigma^L$} if $\Sigma^L$ is a Left option of~$G$; similarly for~$\Sigma^R$. We call $\Sigma^L$ and $\Sigma^R$ (if present) the \textbf{tombstone options} of~$G$; all other options are \textbf{ordinary options}. The variables $G^L$ and $G^R$ will always be understood to range over the \emph{ordinary} Left and Right options of~$G$, and it will also be convenient to write $G^\mathcal{L}$ and $G^\mathcal{R}$ for, respectively, the sets of ordinary Left and Right options of~$G$.

If $G$ has $\Sigma^L$, then it enjoys many of the properties of a Left end. This motivates the following definition.
\begin{definition}
Let $G \in \Mhat$. We say that $G$ is \textbf{Left (resp.~Right) end-like} if $G$ is a Left (resp.~Right) end or $G$ has~$\Sigma^L$ (resp.~$\Sigma^R$).
\end{definition}

\begin{definition}
Let $G \in \Mhat$. We define the \textbf{(mis\`ere) outcomes} $o^L(G)$ and $o^R(G)$ by
\begin{align*}
o^L(G) &= \begin{cases}
\mathscr{L} & \text{if either $G$ is Left end-like, or else some $o^R(G^L) = \mathscr{L}$;} \\
\mathscr{R} & \text{otherwise};
\end{cases} \\
o^R(G) &= \begin{cases}
\mathscr{R} & \text{if either $G$ is Right end-like, or else some $o^L(G^R) = \mathscr{R}$;} \\
\mathscr{L} & \text{otherwise}.
\end{cases}
\end{align*}
\end{definition}
Thus if $G$ has $\Sigma^L$, then necessarily $o^L(G) = \L$; likewise for~$\Sigma^R$. (This is because $\Sigma^L$ is interpreted as the ``tombstone'' of an atomic reversible option.)

\begin{definition}
Let $G,H \in \Mhat$. Then the \textbf{disjunctive sum} $G + H$ is defined as follows. The ordinary options are given in the usual manner:
\begin{samepage}
\[
(G+H)^\mathcal{L} = \big\{G^L + H,\ G + H^L\big\}; \qquad (G+H)^\mathcal{R} = \big\{G^R + H,\ G + H^R\big\}.
\]
For the tombstone options: $G+H$ has $\Sigma^L$ if and only if $G$ and $H$ are \emph{both} Left end-like, and \emph{at least one} of them has~$\Sigma^L$; likewise for~$\Sigma^R$.
\end{samepage}
\end{definition}

It is easily verified that $\Mhat$ is a commutative monoid, with $\mathcal{M} \subset \Mhat$.

With outcomes and disjunctive sum so defined, the definitions of $\geq_\U$ and $\U$-strong extend to all $G,H \in \Mhat$ in the usual manner:
\[
G \geq_\U H \quad\text{if}\quad o(G + X) \geq o(H + X) \quad\text{for all}\quad X \in \U
\]
\[
G \text{ is \textbf{Left \U-strong}} \quad\text{if}\quad o(G + X) \geq \mathscr{N} \quad\text{for every Left end}\quad X \in \U
\]
Here and throughout the sequel, it is understood that $\U \subset \mathcal{M}$: a universe always consists of ordinary games, not augmented ones.\footnote{Some results can be generalized without much difficulty to ``universes'' of augmented games, with a slightly modified definition of \U-strong. Those generalizations will not be needed here, so we make the assumption $\U \subset \mathcal{M}$ in order to keep the exposition clear.} Thus while $G$ and $H$ may be augmented games, the summand $X$ always ranges over true games.

Note that if $G$ has $\Sigma^L$, then $G$ is automatically Left \U-strong: for if $X$ is a Left end, then $G + X$ also has $\Sigma^L$, so $o(G+X) \geq \N$. This yields the following generalization of Theorem~\ref{theorem:lnsequiv}.

\begin{theorem}[cf.~Theorem~\ref{theorem:lnsequiv}]
\label{theorem:lnsequiv-withsigma}
Let $\U \subset \mathcal{M}$ be a universe and let $G,H \in \Mhat$. Then $G \geq_\U H$ if and only if:
\begin{enumerate}
\item[(a)] For every ordinary $G^R$, either there is some ordinary $H^R$ with $G^R \geq_\U H^R$, or else there is some ordinary $G^{RL}$ with $G^{RL} \geq_\U H$;
\item[(b)] For every ordinary $H^L$, either there is some ordinary $G^L$ with $G^L \geq_\U H^L$, or else there is some ordinary $H^{LR}$ with $G \geq_\U H^{LR}$;
\item[(c)] If $H$ is Left end-like, then $G$ is Left \U-strong;
\item[(d)] If $G$ is Right end-like, then $H$ is Right \U-strong.
\end{enumerate}
\end{theorem}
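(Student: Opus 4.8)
The plan is to reduce Theorem~\ref{theorem:lnsequiv-withsigma} to the ordinary Larsson--Nowakowski--Santos theorem (Theorem~\ref{theorem:lnsequiv}) by means of a ``flattening'' map that translates augmented games into true games while preserving the relevant structure. Specifically, to each $G \in \Mhat$ I would associate a genuine game $\phi(G) \in \mathcal{M}$ obtained by replacing each tombstone $\Sigma^L$ with a suitably chosen Left end (say $M_n$ for $n$ large enough to dominate the situation, or more simply $\bar{1}$ when working in a dead-ending universe containing it) and symmetrically for $\Sigma^R$; the key requirements on $\phi$ are that (1) $\phi$ respects disjunctive sums up to $\equiv_\U$, (2) $o^L(\phi(G)) = o^L(G)$ and $o^R(\phi(G)) = o^R(G)$, and (3) $\phi(G)$ is Left end-like (in the ordinary sense) if and only if $G$ is. Given such a $\phi$, both the outcome $o(G+X)$ for $X \in \U$ and the comparison $G \geq_\U H$ transfer verbatim between $G$ and $\phi(G)$, and the theorem becomes a corollary of Theorem~\ref{theorem:lnsequiv}, with the clauses about ``$H$ is Left end-like'' matching clause~(c) of that theorem on the nose.

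Alternatively---and this is probably the cleaner route, since it avoids committing to a particular choice of replacement end---I would prove the theorem directly by the same inductive argument that establishes Theorem~\ref{theorem:lnsequiv}, simply checking that the presence of tombstone options causes no trouble. First I would establish the forward direction: assuming $G \geq_\U H$, I verify (a)--(d). Clause~(c): if $H$ is Left end-like, then for every Left end $X \in \U$ we have $H + X$ is Left end-like, hence $o(H+X) = \L \geq \N$; since $G \geq_\U H$, also $o(G+X) \geq \N$, so $G$ is Left \U-strong. Clauses (a)--(b) follow exactly as in the classical proof by considering the compound $G + X^\circ$ (adjoint) or $G + H^\circ$ and using that tombstone options do not participate in the maintenance moves---a Right move to $\Sigma^R + \cdots$ is never available as an ``ordinary'' reverting move, which is precisely why the quantifiers in (a)--(b) are restricted to ordinary options. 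The reverse direction is the substantive one: assuming (a)--(d), I show $o(G+X) \geq o(H+X)$ for all $X \in \U$ by induction on $\fbirthday(G) + \fbirthday(H) + \fbirthday(X)$, tracking the play position by position. The subtle point is the base-type case where a player's only move on the $G$ (or $H$) side is via a tombstone: if $H$ has $\Sigma^L$, then $H+X$ behaves like a Left end for the purpose of Left having a move, and clause~(c) supplies exactly the strength needed to conclude $o(G+X) \geq \N$ when $o(H+X) \geq \N$.

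I expect the main obstacle to be the careful bookkeeping in the reverse direction around positions that are end-like ``only because of a tombstone.'' In the ordinary LNS proof, the induction splits on whether a relevant component is a genuine end; here one must consistently replace ``is a Left end'' by ``is Left end-like'' everywhere, and re-verify that every step still goes through---in particular that the definition of disjunctive sum for tombstones (namely, $G+H$ has $\Sigma^L$ iff both summands are Left end-like and at least one has $\Sigma^L$) interacts correctly with the inductive hypothesis when $X$ is a true game with no tombstones. One has to confirm that $G+X$ is Left end-like iff $G$ is (using that $X \in \mathcal{M}$ has no $\Sigma^L$, so $G+X$ has $\Sigma^L$ iff $G$ has $\Sigma^L$ and $X$ is a Left end, while $G+X$ is a plain Left end iff both are)---so the two ways of being end-like track cleanly. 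Once this alignment is nailed down, the argument is a routine, if somewhat tedious, transcription of the proof of Theorem~\ref{theorem:lnsequiv}, and I would present it as such, referring the reader to \cite{LNS} for the portions that are genuinely unchanged.
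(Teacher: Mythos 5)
Your second (direct) route is essentially the paper's own proof. The paper redefines the adjoint for augmented games using only the ordinary options (so that $G^\circ$ is a genuine dicot), proves that $G \not\geq_\U H$ yields distinguishing positions in $\U$ of both ``types,'' characterizes $\U$-downlinking by an explicit $T \in \U$ built from these adjoints (in $\U$ by dicotic closure), derives the necessity of (a)--(b) from that, obtains (c)--(d) exactly as you do, and proves sufficiency by induction on $X \in \U$ with three cases, the end-like case being discharged by (c) precisely as in your sketch. So your preferred route is sound in outline; but note that the part you defer as a ``routine transcription'' is where the paper spends its effort: one must verify that the constructed test positions really lie in $\U$, and that the stray cases (e.g.\ Left's move to $G+0$ when $G$ has no ordinary Right options) are handled by end-likeness rather than end-ness. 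Also, your claim that ``$G+X$ is Left end-like iff $G$ is'' is wrong as stated---it requires $X$ to be a Left end as well (your own parenthetical gives the correct rule), and it is exactly this biconditional that the end-like case of the sufficiency induction uses.

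The flattening route, by contrast, does not work, and you should drop it. The tombstone's effect on outcomes is conditional: $\Sigma^L$ survives in $G+X$ only when $X$ is a Left end, whereas any genuine replacement Left option remains available in every sum and can then contribute to the maintenance property, changing values. To be harmless it would have to be atomically $\U$-reversible in $\phi(G)$, i.e.\ $\phi(G) \geq_\U$ some Left end of $\U$; the paper's $\D(A,B)$ example shows there is no canonical such choice, and for arbitrary augmented games there need be no admissible choice at all. Concretely, take $\U = \mathcal{M}$ and $G = \{\Sigma^L, 0 \mid 0\}$: this $G$ is Left $\mathcal{M}$-strong yet has an ordinary Left option, while a true game that is Left $\mathcal{M}$-strong must be a Left end, so no $\phi(G) \in \mathcal{M}$ can meet your requirements (2)--(3) simultaneously---indeed no true game is $\equiv_\mathcal{M}$-equivalent to this $G$. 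Your proposed replacements $M_n$ or $\bar{1}$ also presuppose a dead-ending universe, whereas the theorem is stated for arbitrary $\U \subset \mathcal{M}$. (A ``put the options back'' device does appear in the paper, but later and in the opposite direction: the $\U$-expansion machinery used for the congruence theorem, which itself relies on the comparison theorem already being in place.)
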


Note that tombstone options $\Sigma^L$ and $\Sigma^R$ are relevant only to the proviso.

The proof of Theorem~\ref{theorem:lnsequiv-withsigma} will seem achingly familiar. It belongs to a long line of generalizations that began with Conway's work on impartial games (cf.~\cite{Con01}~\cite{Sie13}~\cite{LNS}), and although it is now several steps removed, the contours of the proof still essentially follow \cite[Theorems 75 and~76]{Con01}. Nonetheless, due to the novel setting, we give the proof here in full.

The adjoint (cf.~Definition~\ref{definition:adjoint}) generalizes in the natural way to elements of~\Mhat.

\begin{definition}
\label{definition:augmented-adjoint}
Let $G \in \Mhat$. The \textbf{adjoint} of~$G$, denoted by~$G^\circ$, is defined by
\[
G^\circ = \begin{cases}
\mathspoofwidth{\cgtwo{(G^R)^\circ}{(\fatGL)^\circ}}{\cgstar} & \text{if $G$ has no ordinary options;} \vspace{0.05in} \\
\cgtwo{(G^R)^\circ}{\mathrlap{0}\phantom{(\fatGL)^\circ}} & \parbox{2.4in}{if $G$ has ordinary Right options,\\${}\qquad$ but no ordinary Left options;} \vspace{0.1in} \\
\cgtwo{\phantom{(G^R)^\circ}\mathllap{0}}{(\fatGL)^\circ} & \parbox{2.4in}{if $G$ has ordinary Left options,\\${}\qquad$ but no ordinary Right options;} \vspace{0.05in} \\
\cgtwo{(G^R)^\circ}{(\fatGL)^\circ} & \text{otherwise}.
\end{cases}
\]
\end{definition}

\begin{proposition}[cf.~Proposition~\ref{prop:adjoint-p}]
$G+G^\circ$ is a \P-position, for all $G \in \Mhat$.
\end{proposition}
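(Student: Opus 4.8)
The plan is to mirror the proof of Proposition~\ref{prop:adjoint-p} almost verbatim, substituting ``Left end-like'' for ``Left end'' throughout and adding one bookkeeping remark about the tombstones. First I would note that the statement is conjugation-symmetric: the case split in Definition~\ref{definition:augmented-adjoint} is invariant under simultaneously conjugating and swapping the roles of Left and Right, so a one-line induction gives $(\overline{G})^\circ = \overline{G^\circ}$, and hence $\overline{G + G^\circ} = \overline{G} + (\overline{G})^\circ$ is again of the form $H + H^\circ$. Since a position is a \P-position exactly when its conjugate is, it suffices to prove the single inequality $o(G + G^\circ) \geq \P$, i.e.\ that Left, moving second, wins $G + G^\circ$. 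I would prove this by induction on $\fbirthday(G)$.

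The one genuinely new ingredient is the observation that $G^\circ$ is never Right end-like. Inspecting Definition~\ref{definition:augmented-adjoint}: $G^\circ$ always has an ordinary Right option---the option $0$ when $G$ has no ordinary Left options, and otherwise one of the games $(G^L)^\circ$, of which there is then at least one---and the adjoint construction never introduces a tombstone, so $G^\circ$ has no $\Sigma^R$. Hence $G + G^\circ$ is not Right end-like either, as that would force $G^\circ$ to be a Right end or to have $\Sigma^R$. Therefore, to show that Left wins $G + G^\circ$ moving second, it is enough to show that Left wins moving first from every ordinary Right option $R$ of $G + G^\circ$.

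Any such $R$ is of one of three types. (i) $R = G^R + G^\circ$ for an ordinary $G^R$: then $G$ has an ordinary Right option, so $(G^R)^\circ$ is an ordinary Left option of $G^\circ$ (second or fourth case of Definition~\ref{definition:augmented-adjoint}), and Left's reply $G^\circ \to (G^R)^\circ$ reaches $G^R + (G^R)^\circ$, a \P-position by the induction hypothesis; so Left wins. (ii) $R = G + (G^L)^\circ$ for an ordinary $G^L$: symmetrically, Left's reply $G \to G^L$ reaches the \P-position $G^L + (G^L)^\circ$. (iii) $R = G + 0$: this occurs exactly when $0$ is an ordinary Right option of $G^\circ$, which by Definition~\ref{definition:augmented-adjoint} means $G$ has no ordinary Left options; but then any Left option of $G$ is the tombstone $\Sigma^L$, so $G$ is Left end-like and $o^L(R) = o^L(G) = \mathscr{L}$ outright, with no reply required. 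These exhaust the ordinary Right options of $G + G^\circ$, closing the induction.

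I do not expect a real obstacle here: the argument is essentially the classical one, and the only subtlety is keeping the four-way case split of the adjoint aligned with the tombstone bookkeeping. That alignment is used in exactly one place---case (iii), where ``$G$ has no ordinary Left options'' must be seen to force $G$ to be \emph{Left end-like}, the augmented replacement for ``$G$ is a Left end'' in Proposition~\ref{prop:adjoint-p}. This is precisely the behaviour that the definition of Left end-like game, together with the tombstone clause of the disjunctive sum, was designed to guarantee, so the substitution of ``Left end-like'' for ``Left end'' is all that the classical proof requires.
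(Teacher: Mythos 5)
Your proof is correct and takes essentially the same route as the paper: reduce by conjugation symmetry to showing $o(G+G^\circ) \geq \P$, then have Left answer each ordinary Right option by reverting to $G^R+(G^R)^\circ$ or $G^L+(G^L)^\circ$ (\P-positions by induction), with Right's move to $G+0$ handled because $G$ is then Left end-like so $o(G) \geq \N$ outright. Your explicit observation that $G^\circ$ is never \emph{Right} end-like (hence neither is $G+G^\circ$) is precisely the fact this inequality needs; the paper's proof asserts the Left end-like analogue at that point, which appears to be a slip, and your statement is the one that actually supports the conclusion $o^R(G+G^\circ)=\mathscr{L}$.
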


\begin{proof}
It suffices (by symmetry) to show that $o(G+G^\circ) \geq \P$. Now $G^\circ$ is not Left end-like, so neither is $G+G^\circ$. Moreover, every Right option of $G+G^\circ$ reverts to a position of the form $G^L+(G^L)^\circ$ or $G^R+(G^R)^\circ$, except for Right's move to $G+0$ when $G$ has no ordinary Left options. But in that case, $o(G) \geq \N$ \emph{a priori}.
\end{proof}

\begin{lemma}[cf.~\cite{Sie13}, Theorem~V.6.5]
\label{lemma:notgeq}
Let $G,H \in \Mhat$. If $G \not\geq_\U H$, then:
\begin{enumerate}
\item[(a)] There is some $T \in \U$ such that $o(G+T) \leq \P$ and $o(H+T) \geq \N$.
\item[(b)] There is some $U \in \U$ such that $o(G+U) \leq \N$ and $o(H+U) \geq \P$.
\end{enumerate}
\end{lemma}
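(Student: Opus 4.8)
The plan is to prove (a) directly and then obtain (b) by conjugate symmetry. Since $G \not\geq_\U H$, there is by definition some $X \in \U$ with $o(G+X) \not\geq o(H+X)$; this forces $o(G+X) \leq \P$ and $o(H+X) \geq \N$ (the only way an outcome inequality can fail is $o(G+X) \in \{\sR, \P\}$ while $o(H+X) \in \{\N, \P\}$, and the specific failure means $o(G+X) \leq \P$ and $o(H+X) \geq \N$ simultaneously only when... actually one must be slightly careful: the failure $o(G+X)\not\geq o(H+X)$ means either $o(G+X) = \P$ and $o(H+X) = \N$, or $o(G+X) = \sR$ and $o(H+X) \in \{\N,\sL\}$, or $o(G+X) = \P$ and $o(H+X) = \sL$; in every case $o(G+X) \leq \P$ and $o(H+X) \geq \N$, so such an $X$ witnesses (a) after all). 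Thus the content of part (a) is \emph{already} delivered by the definition of $\not\geq_\U$, and $T = X$ works. So the real work is part (b): we must produce a $U \in \U$ with $o(G+U) \leq \N$ and $o(H+U) \geq \P$.

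For part (b), the natural candidate is $U = G^\circ + T$ (or a truncation thereof), where $T$ is the witness from part (a), mimicking the standard argument for Theorem~\ref{theorem:full-comparison} in \cite{Sie13}. First I would check $o(G + U) = o(G + G^\circ + T) \leq \N$: since $G + G^\circ$ is a \P-position (the generalized Proposition just proved), playing the $G+G^\circ$ component as the second player reduces to controlling $T$, and $o(G+G^\circ+T) \leq o(0 + T)$ in the appropriate sense — more precisely, $G + G^\circ \leq_\U 0$ follows from $G+G^\circ$ being a \P-position together with the fact that adjoints give inverses up to \P-equivalence, but here I only need the outcome bound $o(G+G^\circ+T)\le\N$, which follows because from $G+G^\circ+T$ with Right to move... hmm, I actually want Left-to-move. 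Let me instead argue: $o(G+U) = o((G+G^\circ)+T)$, and since $G+G^\circ$ is a \P-position it behaves (for outcome purposes, when summed with anything) no better for Left than $0$; combined with $o(G+T) \le \P$ from part (a) applied on the $T$ side... The cleanest route is: $o(G + G^\circ + T) \le \N$ because if Left moves first, either she moves in $T$ — but $o(G+T)\le\P$ doesn't directly help — so one genuinely needs the adjoint machinery showing $G+G^\circ$ is an inverse modulo~$\U$, giving $o(G+G^\circ+T) = o(T) \le $ something, which is wrong since $o(T)$ could be anything. The correct statement, following \cite{Sie13} Theorem~V.6.5, is $U = G^\circ$ alone together with a case analysis, OR $U = T^\circ$-type constructions; I would follow the cited proof's structure precisely.

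Concretely, the key steps, in order: (1) extract the witness $T$ for (a) straight from $G\not\geq_\U H$; (2) verify $T \in \U$ and the two outcome inequalities constituting (a) — immediate. (3) For (b), set $U = G^\circ + T$; (4) show $o(G+U) \leq \N$: since $G + G^\circ$ is a \P-position, Right (moving second in $G+G^\circ+T$ after Left's opening move) can respond within $G+G^\circ$ to maintain a \P-position there unless Left's move was in $T$, but Left moving in $T$ lands in $G+G^\circ+T^L$ where $o(G+T^L)$... — the honest version is to invoke that $G+G^\circ+T$ and $T$ have related outcomes via the \P-position property, precisely as in \cite{Sie13}; (5) show $o(H+U) \geq \P$: from $o(H+T) \geq \N$ (part (a)) and $o(G^\circ + G) = \P$, deduce $o(H + G^\circ + T) \geq \P$ by having the relevant player shadow — if Right moves first in $H+G^\circ+T$, he either moves in $G^\circ$, met by a Left response in $G$ keeping $G+G^\circ$ a \P-position, reducing to $o(H+(\text{\P-position}))\geq$ the outcome governed by $o(H+T)\geq\N$, or he moves in $H$ or $T$, handled by the $o(H+T)\geq\N$ hypothesis; and (6) conclude, noting $U = G^\circ + T \in \U$ requires $G^\circ \in \U$, which holds because $\U$ is a universe (conjugate- and dicotically closed) and $G \in \mathcal{M}$, so $G^\circ \in \mathcal{M}$ — wait, we need $G^\circ \in \U$, not merely $\in \mathcal{M}$, so in fact one should truncate: replace $G^\circ$ by a built-from-$\U$ approximation, or observe the argument only uses $G^\circ$ as a summand and the definition of $\U$-strong/comparison already permits... actually since the statement has $U \in \U$, and $G^\circ$ need not lie in $\U$, the real fix is that $G$ here will be an \emph{augmented} game from $\Mhat$ and $T$ ranges over $\U \subseteq \mathcal{M}$, so one must be more careful — I expect this is where the cited proof does something clever, perhaps using that it suffices to take $U$ a \emph{sum of} $T$ with a dicot built over $\U$-ends, exploiting dicotic closure.

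The main obstacle I anticipate is precisely ensuring the constructed $U$ lies in $\U$ (not just in $\mathcal{M}$ or $\Mhat$): the adjoint $G^\circ$ of an augmented game, or of a game not in $\U$, need not be in $\U$, so the standard "add the adjoint" trick must be adapted — presumably by combining the part-(a) witness $T$ with a \emph{dicotic} gadget (legal by closure property (iv)) that simulates the adjoint's effect using only material already in $\U$, or by first passing to the full universe $\mathcal{M}$ to run the argument and then pushing down. Handling the tombstone options $\Sigma^L, \Sigma^R$ inside the outcome and adjoint computations (they behave like Left/Right ends for outcome purposes) is a secondary but routine wrinkle. Everything else is a by-now-standard second-player-strategy bookkeeping argument of the kind appearing throughout \cite{Con01, Sie13, LNS}.
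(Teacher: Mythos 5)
There are two genuine gaps. The first is your reduction of (a) to the definition: the failure $o(G+X) \not\geq o(H+X)$ does \emph{not} force $o(G+X) \leq \P$ and $o(H+X) \geq \N$. Your case list is incomplete; for instance $o(G+X) = \N$, $o(H+X) = \sL$, or $o(G+X) = \sR$, $o(H+X) = \P$, are failures of the outcome inequality that witness only (b), not (a) (recall $\P$ and $\N$ are incomparable). The correct starting point --- and the paper's --- is that the definitional witness yields \emph{at least one} of (a) or (b), and by symmetry one may assume (a) holds and must then \emph{construct} a witness for (b). So the constructive step cannot be avoided in either direction, and your ``(a) is free, derive (b)'' plan already starts from a false premise.

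The second gap is that the construction for (b) is never actually carried out, and the candidate you lean on, $U = G^\circ + T$, does not work. (Membership in $\U$ is not the obstacle you fear: adjoints are dicots, so $G^\circ \in \D \subset \U$ automatically.) The real problem is that in mis\`ere play a $\P$-position is not a neutral summand, so ``$G + G^\circ$ behaves like $0$'' gives no way to deduce $o(G+G^\circ+T) \leq \N$ from $o(G+T) \leq \P$; and your sketch of $o(H+G^\circ+T) \geq \P$ has Left ``respond in $G$,'' a component that is not present in that sum. The paper's key idea, absent from your proposal, is a different gadget built from the \emph{other} game: $U = \{\,0, (H^R)^\circ \mid T\,\}$, which lies in $\U$ by dicotic closure since each $(H^R)^\circ$ is a dicot and $T \in \U$. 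Then $o(G+U) \leq \N$ because Right's first move $U \to T$ lands in $G+T$ with $o(G+T) \leq \P$; and $o(H+U) \geq \P$ because $H+U$ is not Right end-like, Right's move to $H+T$ loses for him since $o(H+T) \geq \N$, and Left answers any $H \to H^R$ by playing $U$ to $(H^R)^\circ$, reaching the $\P$-position $H^R + (H^R)^\circ$. Deferring to ``follow the cited proof's structure precisely'' does not substitute for supplying this construction.
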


\begin{proof}
Since $G \not\geq_\U H$, at least one of (a) or (b) must be true. By symmetry, it suffices to assume (a) and prove~(b).

Fix $T$ as in (a), and put
\[
U = \combgame{\{0,(H^R)^\circ|T\}}.
\]
Now $T \in \U$ and each $(H^R)^\circ$ is a dicot, so since $\U$ is dicotically closed, we have also $U \in \U$. Since $o(G+T) \leq \P$ and $T$ is a Right option of~$U$, we certainly have $o(G+U) \leq \N$. It remains to be shown that $o(H+U) \geq \P$.

But $U$ is not Right end-like, so neither is $H+U$. Moreover, Right's options from $H+U$ all have the form $H^R+U$ or $H+T$. But $o(H+T) \geq \N$ by hypothesis, and each $H^R+U$ has a reverting move to $H^R+(H^R)^\circ$, a $\mathscr{P}$-position.
\end{proof}

\begin{definition}
Let $G,H \in \Mhat$. We say that $G$ is \textbf{\U-downlinked} to~$H$ (by~$T$) if
\[
o(G + T) \leq \mathscr{P} \quad\text{and}\quad o(H + T) \geq \mathscr{P} \qquad \text{for some } T \in \U.
\]
\end{definition}

\begin{lemma}
\label{lemma:downlinked}
Let $G,H \in \Mhat$. Then the following are equivalent:
\begin{enumerate}
\item[(i)] $G$ is \U-downlinked to $H$;
\item[(ii)] No ordinary $G^L \geq_\U H$, and $G \geq_\U$ no ordinary~$H^R$.
\end{enumerate}
\end{lemma}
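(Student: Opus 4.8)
The plan is to prove the two implications separately; the direction (i)~$\Rightarrow$~(ii) is a short outcome computation, and (ii)~$\Rightarrow$~(i) will require building an explicit downlink witness. For (i)~$\Rightarrow$~(ii), suppose $G$ is \U-downlinked to $H$ by some $T \in \U$, so $o(G+T) \leq \P$ and $o(H+T) \geq \P$. Unwinding the definition of $o^L$, the inequality $o(G+T) \leq \P$ forces every ordinary Left option of $G+T$ — in particular every $G^L+T$ — to satisfy $o(\,\cdot\,) \leq \N$; dually, $o(H+T) \geq \P$ forces every ordinary Right option of $H+T$, in particular every $H^R+T$, to satisfy $o(\,\cdot\,) \geq \N$. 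Now if some ordinary $G^L \geq_\U H$, then $o(G^L+T) \geq o(H+T) \geq \P$, which is incompatible with $o(G^L+T) \leq \N$; and if $G \geq_\U$ some ordinary $H^R$, then $o(G+T) \geq o(H^R+T) \geq \N$, incompatible with $o(G+T) \leq \P$. This gives~(ii).

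For (ii)~$\Rightarrow$~(i), I would proceed as follows. Since each ordinary $G^L$ satisfies $G^L \not\geq_\U H$, Lemma~\ref{lemma:notgeq}(a) supplies a $T_{G^L} \in \U$ with $o(G^L + T_{G^L}) \leq \P$ and $o(H + T_{G^L}) \geq \N$; since each ordinary $H^R$ satisfies $G \not\geq_\U H^R$, Lemma~\ref{lemma:notgeq}(b) supplies a $U_{H^R} \in \U$ with $o(G + U_{H^R}) \leq \N$ and $o(H^R + U_{H^R}) \geq \P$. I then set
\[
T = \{\, U_{H^R},\ (G^R)^\circ \mid T_{G^L},\ (H^L)^\circ \,\},
\]
where $G^R$ (resp.~$H^L$) ranges over ordinary Right (resp.~Left) options of $G$ (resp.~$H$), with the convention that an empty side is replaced by $\{0\}$. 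Each adjoint $(G^R)^\circ,(H^L)^\circ$ is a dicot and so lies in $\D \subseteq \U$, and $0, T_{G^L}, U_{H^R} \in \U$, so $T \in \U$ by dicotic closure, both sides being nonempty by construction. It then remains to check $o(G+T) \leq \P$ and, symmetrically, $o(H+T) \geq \P$. For the former: $G+T$ is not Left end-like (it is not a Left end, since $T$ has a Left option, and it carries no $\Sigma^L$, since $T$ is an ordinary game that is not a Left end), and every ordinary Left option of $G+T$ loses for Left to move. An option $G^L+T$ is answered by Right's move to $G^L+T_{G^L}$, which has $o \leq \P$; an option $G+U_{H^R}$ already has $o \leq \N$; an option $G+(G^R)^\circ$ has $o \leq \N$ because it is a Left option of the $\P$-position $G+G^\circ$ (cf.~Definition~\ref{definition:augmented-adjoint} and the augmented analogue of Proposition~\ref{prop:adjoint-p}); and the option $G+0 = G$, which arises only when $G$ has no ordinary Right option and is hence Right end-like, satisfies $o(G) \leq \N$. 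The verification of $o(H+T) \geq \P$ is the mirror image, using $U_{H^R}$, $(H^L)^\circ$, and the $\P$-position $H+H^\circ$.

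I expect the main obstacle to be the bookkeeping around the degenerate cases: when $G$ or $H$ has no ordinary options on some side, or carries a tombstone. These are precisely the cases that force the adjoint terms $(G^R)^\circ, (H^L)^\circ$ and the $\{0\}$-padding into the definition of $T$ — a na\"ive choice such as $T = \{\, U_{H^R} \mid T_{G^L}\,\}$ breaks down, for instance, when $H$ has no ordinary Right option — and they require one to track carefully how the outcome functions $o^L, o^R$ and the notion ``end-like'' interact under disjunctive sums in $\Mhat$. Everything else should be a routine unwinding of the definitions of disjunctive sum and outcome for augmented games, together with the fact that $G+G^\circ$ is always a \P-position.
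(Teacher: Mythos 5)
Your proposal is correct and follows essentially the same route as the paper: the same outcome computation for (i)~$\Rightarrow$~(ii), and for (ii)~$\Rightarrow$~(i) the same witness $T$ assembled from the Lemma~\ref{lemma:notgeq} witnesses together with the adjoints $(G^R)^\circ$, $(H^L)^\circ$, verified via the \P-positions $G+G^\circ$, $H+H^\circ$. The only difference is that where the paper splits the definition of $T$ into explicit degenerate cases, you use a uniform $\{0\}$-padding convention for empty sides, and your accompanying check (the move to $G+0$ arises only when $G$ has no ordinary Right option, hence is Right end-like with $o(G)\leq\N$) handles those cases correctly.
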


\begin{proof}
(i) $\Rightarrow$ (ii): Suppose that $G$ is \U-downlinked to~$H$, and fix $T \in \U$ with
\[
o(G + T) \leq \mathscr{P} \quad\text{and}\quad o(H + T) \geq \mathscr{P}.
\]
Then for each ordinary $G^L$, we have $o(G^L + T) \leq \mathscr{N}$, so that
\[o(G^L + T) \not\geq o(H + T).\]
This shows that $G^L \not\geq_\U H$, and a similar argument works for each~$H^R$.

\vspace{0.1in}\noindent
(ii) $\Rightarrow$ (i): Suppose that no ordinary $G^L \geq_\U H$ and $G \geq_\U$ no ordinary~$H^R$. Then for each ordinary Left option $G^L_i$ of~$G$, Lemma~\ref{lemma:notgeq} yields an $X_i \in \U$ such that
\[
o(G^L_i + X_i) \leq \P \quad\text{and}\quad o(H+X_i) \geq \N.
\]
Likewise, for each ordinary Right option $H^R_j$ of~$H$, there is a $Y_j \in \U$ such that
\[
o(G + Y_j) \leq \N \quad\text{and}\quad o(H^R_j + Y_j) \geq \P.
\]
Put
\[
T = \begin{cases}
\cgstar & \text{if neither $G$ nor $H$ has any ordinary options}; \vspace{0.05in} \\
\combgame{\{0|(H^L)^\circ\}} & \parbox{2.4in}{if $G$ has no ordinary options and\\${}\qquad H$ has no ordinary Right options;} \vspace{0.1in} \\
\combgame{\{(G^R)^\circ|0\}} & \parbox{2.4in}{if $H$ has no ordinary options and\\${}\qquad G$ has no ordinary Left options;} \vspace{0.05in} \\
\combgame{\{Y_j,(G^R)^\circ|X_i,(H^L)^\circ\}} & \text{otherwise}.
\end{cases}
\]
In all cases we have $T \in \U$. We claim that $T$ downlinks $G$ to~$H$. We will show that $o(G+T) \leq \P$; an identical argument shows that $o(H+T) \geq \P$.

The definition of $T$ ensures that $T$ is not Left end-like, so neither is $G+T$. To complete the proof, it therefore suffices to show that every ordinary Left option of $G+T$ is losing. But if Left moves to $G_i^L + T$, Right can counter to $G_i^L + X_i$; while if Left moves to $G + (G^R)^\circ$, Right can respond to $G^R + (G^R)^\circ$. Left's moves to $G + Y_j$ lose automatically, by definition of~$Y_j$. The only remaining possibility is Left's move to $G + 0$ in the case where $G$ has no ordinary Right options; but then $G$ is Right end-like, so that $o(G) \geq \N$ \emph{a priori}.
\end{proof}
%

We are now in a position to complete the proof of Theorem~\ref{theorem:lnsequiv-withsigma}.

\begin{proof}[Proof of Theorem~\ref{theorem:lnsequiv-withsigma}]
We prove the Theorem by induction on $G$ and~$H$.

\vspace{0.1in}\noindent
For $\Rightarrow$ (a), fix an ordinary~$G^R$, and suppose (for contradiction) that $G^R \geq_\U$ no ordinary $H^R$ and no ordinary $G^{RL} \geq_\U H$. Then by Lemma~\ref{lemma:downlinked}, we have that $G^R$ is \U-downlinked to~$H$, so that
\[
o(G^R + T) \leq \mathscr{P} \quad\text{and}\quad o(H + T) \geq \mathscr{P}
\]
for some $T \in \U$. It follows that $o(G + T) \leq \mathscr{N}$, contradicting the assumption $G \geq_\U H$.

\vspace{0.05in}\noindent
For $\Rightarrow$ (c), note that if $H$ is Left end-like, then it is automatically Left \U-strong; therefore so is~$G$. Now (b) and (d) follow by symmetry, so this completes the proof of~$\Rightarrow$.

\vspace{0.1in}\noindent
For the $\Leftarrow$ direction, assume that $G$ and~$H$ satisfy (a)--(d); we must show that $o(G+X) \geq o(H+X)$ for each $X \in \U$. We proceed by induction on~$X$. At each step of the induction, it suffices to show that $o^L(G+X) \geq o^L(H+X)$, as the proof that $o^R(G+X) \geq o^R(H+X)$ is identical.

Suppose that $o^L(H+X) = \sL$; we must show that also $o^L(G+X) = \sL$. There are three cases:

\vspace{0.05in}\noindent\emph{Case 1}: $o^R(H+X^L) = \sL$ for some~$X^L$. Then also $o^R(G+X^L) = \sL$, by induction on~$X$, so that $o^L(G+X) = \sL$.

\vspace{0.05in}\noindent\emph{Case 2}: $o^R(H^L+X) = \sL$ for some ordinary~$H^L$. Then the assumption (b) implies either $G^L \geq_\U H^L$ for some ordinary~$G^L$, or else $G \geq_\U H^{LR}$ for some ordinary~$H^{LR}$. In the former case, we have immediately $o^R(G^L+X) = \sL$; while in the latter, we must have $o^L(H^{LR} + X) = \sL$ (since $o^R(H^L+X) = \sL$), so we obtain directly $o^L(G + X) = \sL$.

\vspace{0.05in}\noindent\emph{Case 3}: $H+X$ is Left end-like. Then $X$ must be a Left end and $H$ is Left end-like. It follows from (c) that $G$ is Left \U-strong, whereupon $o^L(G+X) = \sL$, since $X \in \U$.
\end{proof}

\subsection*{Simplification in \Mhat}

\begin{definition}
Let $G \in \Mhat$.
\begin{enumerate}
\item[(a)] An ordinary Left option $G^{L_1}$ is \textbf{\U-dominated (by $G^{L_2}$)} if $G^{L_1} \leq_\U G^{L_2}$ for some other ordinary Left option~$G^{L_2}$.
\item[(b)] An ordinary Left option $G^{L_1}$ is \textbf{\U-reversible (through $G^{L_1R_1}$)} if \\${G \geq_\U G^{L_1R_1}}$ for some ordinary Right option $G^{L_1R_1}$.
\end{enumerate}
\U-dominated and \U-reversible Right options are defined analogously.
\end{definition}

\begin{lemma}
\label{lemma:u-dominated}
Suppose that $G^{L_1}$ is \U-dominated by $G^{L_2}$, and let $G'$ be the game obtained by eliminating $G^{L_1}$ from~$G$. Then $G' \equiv_\U G$.
\end{lemma}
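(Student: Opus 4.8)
The plan is to verify the four conditions of Theorem~\ref{theorem:lnsequiv-withsigma} in both directions, exploiting the fact that $G$ and $G'$ differ only in the presence of the single ordinary Left option $G^{L_1}$; every other ordinary option, together with the tombstone options and the ``end-like'' status of each, is shared between $G$ and $G'$. First I would establish $G \geq_\U G'$. Conditions (a) and (d) concern the Right options, which are identical, and reduce to $G^R \geq_\U G'^R$ (same option) and the implication that if $G$ is Right end-like then so is $G'$ (again immediate, since Right options and $\Sigma^R$ are untouched); these are handled by a trivial induction. Condition (c) is likewise immediate: $G'$ Left end-like forces $G$ Left end-like (deleting an ordinary Left option cannot destroy the end-like property — actually it can only help), and then $G$ is Left \U-strong by the induction hypothesis applied to the identity, or more directly since $G \geq_\U G$. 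Condition (b): given an ordinary $G'^L$, it is also an ordinary $G^L$, so we may take the witnessing option on the $G$ side to be the same one, giving $G^L \geq_\U G'^L$.

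The reverse inequality $G' \geq_\U G$ is where the domination hypothesis does its work. Conditions (a), (c), (d) go through as before (the Right side and end-like status are unchanged, and $G'$ Right end-like iff $G$ Right end-like). The crux is condition (b): fix an ordinary Left option $G^L$ of $G$ and find an ordinary $G'^L$ with $G'^L \geq_\U G^L$, or an ordinary $G^{LR}$ with $G' \geq_\U G^{LR}$. If $G^L \not\cong G^{L_1}$, then $G^L$ survives in $G'$ and we take $G'^L = G^L$. If $G^L \cong G^{L_1}$, then by hypothesis $G^{L_1} \leq_\U G^{L_2}$ for some other ordinary Left option $G^{L_2}$, and $G^{L_2}$ is still present in $G'$; so $G'^{L_2} = G^{L_2} \geq_\U G^{L_1} = G^L$, as required.

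I expect the only mild subtlety to be bookkeeping around the end-like hypotheses in conditions (c) and (d): one must check that eliminating an \emph{ordinary} Left option never changes whether a position has $\Sigma^L$ or $\Sigma^R$, and never changes whether it is a Left or Right end in a way that matters — but since tombstones and Right options are literally untouched, and a position with some ordinary Left options remaining is ``less end-like'' at worst in the Left direction (which only strengthens the hypotheses we must discharge, never the conclusions), there is nothing to worry about. The argument is a routine instance of the standard domination-elimination lemma, transplanted to the augmented setting; no new ideas beyond Theorem~\ref{theorem:lnsequiv-withsigma} are needed.
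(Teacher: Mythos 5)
Your proposal is correct and follows essentially the same route as the paper: verify conditions (a)--(d) of Theorem~\ref{theorem:lnsequiv-withsigma} in both directions, with the domination hypothesis $G^{L_2} \geq_\U G^{L_1}$ used only to discharge the maintenance condition (b) in the direction $G' \geq_\U G$. One small slip: your parenthetical justification in condition (c) (``deleting an ordinary Left option cannot destroy the end-like property'') runs the monotonicity the wrong way; the correct reason that $G'$ Left end-like forces $G$ Left end-like is that $G'$ still has the ordinary option $G^{L_2}$, so it cannot be a Left end, and hence its end-like status can only come from $\Sigma^L$, which $G$ shares --- a trivially repaired point that does not affect the argument.
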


\begin{proof}
We show that $G \geq_\U G'$ and $G' \geq_\U G$ by verifying (a)--(d) in Theorem~\ref{theorem:lnsequiv-withsigma}. For (a) and (d) there is nothing to do, since $G$ and $G'$ have identical Right options. For (c), note that neither $G$ nor $G'$ is a Left end, and $G$ has $\Sigma^L$ if and only if $G'$ does.

Finally, for~(b), note that the ordinary Left options of $G$ pair with those of~$G'$, except for $G^{L_1}$ in the $G' \geq_\U G$ case. But then the assumption $G^{L_2} \geq_\U G^{L_1}$ suffices to verify the maintenance property.
\end{proof}

\begin{lemma}[Ordinary Reversibility]
\label{lemma:reversible1}
Suppose that $G^{L_1}$ is \U-reversible through $G^{L_1R_1}$, and assume that $G^{L_1R_1}$ is not a Left end. Put
\[
G' = \combgame{\{G^{L_1R_1L},G^{L'}|G^R\}},
\]
where $G^{L'}$ ranges over all (ordinary and/or tombstone) Left options of $G$ \emph{except} for~$G^{L_1}$, and $G^{L_1R_1L}$ ranges over all (ordinary and/or tombstone) Left options of $G^{L_1R_1}$. Then ${G' \equiv_\U G}$.
\end{lemma}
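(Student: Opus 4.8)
The plan is to verify $G \geq_\U G'$ and $G' \geq_\U G$ directly from the characterization in Theorem~\ref{theorem:lnsequiv-withsigma}, exactly as in the proof of Lemma~\ref{lemma:u-dominated}. First I would record the relevant bookkeeping. The Right options of $G'$ (including the tombstone $\Sigma^R$, if present) coincide with those of $G$; in particular $G$ is Right end-like if and only if $G'$ is, and since any Right end-like game is automatically Right $\U$-strong, conditions (a) and (d) of Theorem~\ref{theorem:lnsequiv-withsigma} will be immediate in both directions. On the Left side, $G$ is not a Left end (it has the ordinary option $G^{L_1}$), and since $G^{L_1R_1}$ is assumed \emph{not} to be a Left end it contributes at least one Left option to $G'$, so $G'$ is not a Left end either; moreover $G'$ has $\Sigma^L$ exactly when $G$ has $\Sigma^L$ or $G^{L_1R_1}$ does. (That $G^{L_1R_1}$ is not a Left end is precisely what makes this an \emph{ordinary} reversibility: in the atomic case $G^{L_1}$ cannot be replaced in this way, which is what the tombstone $\Sigma^L$ is for, and is handled separately.) Throughout, the hypothesis $G \geq_\U G^{L_1R_1}$ will be the engine of the argument.

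The proviso (c) is handled as follows. If $G'$ is Left end-like, then by the bookkeeping above either $G$ has $\Sigma^L$ --- whence $G$ is itself Left end-like and hence Left $\U$-strong --- or $G^{L_1R_1}$ has $\Sigma^L$, so that $G^{L_1R_1}$ is Left end-like and condition (c) applied to the hypothesis $G \geq_\U G^{L_1R_1}$ shows $G$ is Left $\U$-strong. The symmetric statement ($G$ Left end-like $\Rightarrow$ $G'$ Left $\U$-strong) is easier, since $G$ Left end-like forces $G$ to have $\Sigma^L$, which $G'$ then inherits. For the maintenance property (b), each ordinary Left option $G^{L'} \neq G^{L_1}$ of $G$ is an ordinary Left option of $G'$ and conversely, so those options are paired with themselves and require nothing more; it remains to deal with the new options of $G'$, the $G^{L_1R_1L}$, and with the deleted option $G^{L_1}$ of $G$.

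For $G \geq_\U G'$, I would apply condition (b) of Theorem~\ref{theorem:lnsequiv-withsigma} to the hypothesis $G \geq_\U G^{L_1R_1}$, with $G^{L_1R_1}$ playing the role of ``$H$'': for each ordinary Left option $G^{L_1R_1L}$ of $G^{L_1R_1}$, this yields either an ordinary $G^L$ with $G^L \geq_\U G^{L_1R_1L}$ --- settling the first alternative of (b) for the option $G^{L_1R_1L}$ of $G'$ --- or an ordinary $G^{L_1R_1LR}$ with $G \geq_\U G^{L_1R_1LR}$, settling the second alternative, since $G^{L_1R_1LR}$ is an ordinary Right option of $G^{L_1R_1L}$. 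For $G' \geq_\U G$, the only nontrivial instance of (b) is the deleted option $G^L = G^{L_1}$, and here I claim it is enough to establish $G' \geq_\U G^{L_1R_1}$; granting that, $G^{L_1R_1}$ --- an ordinary Right option of $G^{L_1}$ --- serves as the witness $G^{LR}$ for the second alternative of (b), which finishes the proof.

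The main obstacle is therefore to prove $G' \geq_\U G^{L_1R_1}$ without circularity: one must \emph{not} simply compose $G' \geq_\U G$ with $G \geq_\U G^{L_1R_1}$, since $G' \geq_\U G$ is exactly what is being proved. I would instead check (a)--(d) of Theorem~\ref{theorem:lnsequiv-withsigma} for $G' \geq_\U G^{L_1R_1}$ directly, observing that each condition is handed to us by the corresponding condition of the \emph{hypothesis} $G \geq_\U G^{L_1R_1}$ together with the construction of $G'$: condition (a) for $G' \geq_\U G^{L_1R_1}$ reads verbatim the same as condition (a) for $G \geq_\U G^{L_1R_1}$, since $G'$ and $G$ have the same Right options; condition (b) is automatic because every Left option of $G^{L_1R_1}$ is, by construction, literally a Left option of $G'$; condition (c) is vacuous unless $G^{L_1R_1}$ has $\Sigma^L$ (it is not a Left end), in which case $G'$ inherits $\Sigma^L$ and is Left $\U$-strong; and condition (d) follows from condition (d) of the hypothesis, again using that $G'$ and $G$ share the same Right-end-like status. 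This direct transcription is the heart of the matter --- and it is exactly why the bypass replaces $G^{L_1}$ by the family of Left options of $G^{L_1R_1}$ rather than deleting it outright. No induction on $G$ is needed; each appeal to Theorem~\ref{theorem:lnsequiv-withsigma} is a single structural unwinding.
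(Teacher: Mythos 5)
Your proposal is correct and follows essentially the same route as the paper: both directions are verified against Theorem~\ref{theorem:lnsequiv-withsigma}, with the new options $G^{L_1R_1L}$ handled by applying the maintenance property of the hypothesis $G \geq_\U G^{L_1R_1}$, the deleted option $G^{L_1}$ handled via the key sub-claim $G' \geq_\U G^{L_1R_1}$ (checked directly, avoiding the circularity you flag), and the proviso cases traced through the tombstone bookkeeping exactly as in the paper's proof.
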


\begin{proof}
We show that $G \geq_\U G'$ and $G' \geq_\U G$ by verifying (a)--(d) in Theorem~\ref{theorem:lnsequiv-withsigma}. As in Lemma~\ref{lemma:u-dominated}, for (a) and (d) there is nothing to do, since $G$ and $G'$ have identical Right options.

\vspace{0.1in}\noindent
(i) First we show that $G \geq_\U G'$. To verify (b), note that every Left option of $G'$ of the form $G^{L'}$ is paired by an identical Left option in~$G$, so it suffices to consider an ordinary Left option of $G'$ of the form $G^{L_1R_1L}$. We must show that either some $G^L \geq_\U G^{L_1R_1L}$, or else $G \geq_\U$ some $G^{L_1R_1LR}$; but one or the other must follow from the maintenance property applied to $G \geq_\U G^{L_1R_1}$.

To verify (c): $G'$ cannot be a Left end (by the assumption on $G^{L_1R_1}$). If $G'$ has~$\Sigma^L$, then either $G$ or $G^{L_1R_1}$ must also have~$\Sigma^L$. But since $G \geq_\U G^{L_1R_1}$, either case implies that $G$ is Left \U-strong.

\vspace{0.1in}\noindent
(ii) Next we show that $G' \geq_\U G$. Here (c) is trivial: $G$~cannot be a Left end, and if $G$ has~$\Sigma^L$, then so must~$G'$. For~(b), note that every Left option of $G$ is paired by an identical option of~$G'$, except for~$G^{L_1}$. Thus to complete the proof, it suffices to show that $G' \geq_\U G^{L_1R_1}$.

Now each Left option of $G^{L_1R_1}$ is paired in~$G'$, and each Right option of $G'$ is also an option of~$G$. Since $G \geq_\U G^{L_1R_1}$, this verifies the maintenance property. For the proviso, note that neither $G'$ nor $G^{L_1R_1}$ is a Left end, and if $G^{L_1R_1}$ has~$\Sigma^L$, then so does~$G'$. Finally, if $G'$ is Right end-like, then so is~$G$, and it follows from $G \geq_\U G^{L_1R_1}$ that $G^{L_1R_1}$ is Right \U-strong.
\end{proof}

\begin{lemma}[Atomic Reversibility]
\label{lemma:reversible2}
Suppose that $G^{L_1}$ is \U-reversible through $G^{L_1R_1}$, and assume that $G^{L_1R_1}$ is a Left end. Put
\[
G' = \combgame{\{\Sigma^L,G^{L'}|G^R\}},
\]
where $G^{L'}$ is understood to range over all Left options \emph{except}~$G^{L_1}$. Then ${G' \equiv_\U G}$.
\end{lemma}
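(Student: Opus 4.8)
The plan is to follow the template of Lemma~\ref{lemma:reversible1}, establishing $G \geq_\U G'$ and $G' \geq_\U G$ separately by checking conditions (a)--(d) of Theorem~\ref{theorem:lnsequiv-withsigma}. Since $G$ and $G'$ have exactly the same Right options, ordinary and tombstone, condition (a) is immediate in both directions, and for (d) one notes that $G$ is Right end-like if and only if $G'$ is, and that any Right end-like augmented game is automatically Right \U-strong (it is either a Right end or has~$\Sigma^R$). So all the work is in (b) and (c).

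The one structural fact to extract from the hypothesis is that $G$ is Left \U-strong. This is because $G \geq_\U G^{L_1R_1}$ and $G^{L_1R_1}$ is a Left end --- hence Left \U-strong --- so $o(G+X) \geq o(G^{L_1R_1}+X) \geq \N$ for every Left end $X \in \U$. With that in hand, condition (c) is disposed of in both directions: for $G \geq_\U G'$, the game $G'$ is Left end-like (it has $\Sigma^L$) and $G$ is Left \U-strong as just noted; for $G' \geq_\U G$, the game $G'$ has $\Sigma^L$ and is therefore automatically Left \U-strong, so there is nothing to check. Condition (b) in the direction $G \geq_\U G'$ is also trivial, since every ordinary Left option of $G'$ is one of the $G^{L'}$, which is an ordinary Left option of $G$ and pairs with itself.

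The heart of the argument is condition (b) in the direction $G' \geq_\U G$. The options $G^{L'}$ pair with themselves, and the only remaining Left option of $G$ to account for is $G^{L_1}$. For this I would show that $G' \geq_\U G^{L_1R_1}$, which closes the ``else'' branch of (b) since $G^{L_1R_1}$ is an ordinary Right option of $G^{L_1}$. Proving $G' \geq_\U G^{L_1R_1}$ is another (a)--(d) check, almost all of it routine: (b) is vacuous because $G^{L_1R_1}$, being a Left end, has no ordinary Left options; (c) holds because $G'$ has $\Sigma^L$; (d) holds because $G'$ is Right end-like only if $G$ is, in which case condition (d) applied to $G \geq_\U G^{L_1R_1}$ gives that $G^{L_1R_1}$ is Right \U-strong; and (a) follows from the maintenance property of $G \geq_\U G^{L_1R_1}$ together with the fact that $G'$ has the same ordinary Right options as $G$.

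I do not anticipate a genuine obstacle; the proof is bookkeeping of the sort that recurs throughout this section. The only place warranting a moment's care is the auxiliary claim $G' \geq_\U G^{L_1R_1}$, which is precisely why the tombstone $\Sigma^L$ is an adequate replacement for the atomic-reversible option $G^{L_1}$: it must simultaneously let $G'$ reverse back out through the end $G^{L_1R_1}$ (there is no Left-side maintenance obligation here exactly because $G^{L_1R_1}$ is an end) and preserve the proviso, which is the role $\Sigma^L$ was designed to play.
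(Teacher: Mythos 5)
Your proof is correct and follows essentially the same route as the paper's: verify the maintenance property using the identical paired options, reduce the only nontrivial part of condition (b) to the auxiliary claim $G' \geq_\U G^{L_1R_1}$ (whose maintenance check is inherited from $G \geq_\U G^{L_1R_1}$ since $G'$ and $G$ share Right options), and settle the proviso by noting that $G \geq_\U G^{L_1R_1}$ with $G^{L_1R_1}$ a Left end makes $G$ Left \U-strong, while $\Sigma^L$ makes $G'$ automatically Left \U-strong. Your write-up is in fact slightly more explicit than the paper's about the routine (c)/(d) checks, but there is no substantive difference.
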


\begin{proof}
(i) We first show that $G \geq_\U G'$. Every Right option of $G$ is paired by an identical option of~$G'$, and likewise for every Left option of $G'$ except for~$\Sigma^L$. This verifies the maintenance property. For the proviso, note that since $G \geq_\U G^{L_1R_1}$ and $G^{L_1R_1}$ is a Left end, it follows immediately that $G$ is Left \U-strong.

\vspace{0.1in}\noindent
(ii) Next we show that $G' \geq_\U G$. Every Right option of $G'$ is paired by an identical option of~$G$, and likewise for every Left option of $G$ except for~$G^{L_1}$. Therefore it suffices to show that $G' \geq_\U G^{L_1R_1}$. Now $G^{L_1R_1}$ has no Left option, so consider a Right option $G^R$ of~$G'$. Since $G \geq_\U G^{L_1R_1}$, it must be the case that either some $G^R \geq_\U G^{L_1R_1R}$ or $G^{RL} \geq_\U$ some $G^{L_1R_1}$; either one suffices to establish $G' \geq_\U G^{L_1R_1}$ as well.

For the proviso, note that $G'$ is necessarily \U-strong, since it has $\Sigma^L$ as a Left option.
\end{proof}

\begin{definition}
Let $G \in \Mhat$ and suppose that $G$ has~$\Sigma^L$. The \textbf{Left $\Sigma$-erasure} $\sigma^L(G)$ is the game obtained from $G$ by removing $\Sigma^L$ as a Left option.

The Right $\Sigma$-erasure $\sigma^R(G)$ is defined analogously.
\end{definition}

\begin{lemma}[$\Sigma$-Erasure]
Let $G \in \Mhat$. If $\sigma^L(G)$ is Left \U-strong, then $\sigma^L(G) \equiv_\U G$; likewise for $\sigma^R(G)$.
\end{lemma}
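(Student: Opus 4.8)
The plan is to verify the two inequalities $G \geq_\U \sigma^L(G)$ and $\sigma^L(G) \geq_\U G$ directly from the characterization in Theorem~\ref{theorem:lnsequiv-withsigma}, exactly in the style of Lemmas~\ref{lemma:u-dominated}, \ref{lemma:reversible1}, and~\ref{lemma:reversible2}. The key structural observation is that $G$ and $\sigma^L(G)$ differ \emph{only} in whether $\Sigma^L$ is present as a Left option: they have identical (ordinary and tombstone) Right options, identical ordinary Left options, and the same $\Sigma^R$-status. Consequently conditions (a) and (d) of Theorem~\ref{theorem:lnsequiv-withsigma} hold trivially in both directions, since the relevant options are paired with identical copies and neither game's Right-end-like status changes.

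For the direction $G \geq_\U \sigma^L(G)$, I would check (b) and (c) with $G$ in the role of the larger game. Condition (b) is immediate: every ordinary Left option of $\sigma^L(G)$ is an ordinary Left option of $G$, hence matched by an identical copy. For (c): if $\sigma^L(G)$ is Left end-like then, having no $\Sigma^L$, it must be a genuine Left end; but that forces $G$ to have had no ordinary Left options, so $G$'s only Left option was $\Sigma^L$, whence $G$ has $\Sigma^L$ and is therefore automatically Left \U-strong. Note this direction does not even use the hypothesis.

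For the converse $\sigma^L(G) \geq_\U G$, condition (b) is again immediate for the same pairing reason. The crux is condition (c): here $G$ plays the role of the smaller game, and $G$ is Left end-like precisely because it has $\Sigma^L$; the theorem therefore demands that $\sigma^L(G)$ be Left \U-strong, which is exactly the hypothesis of the lemma. This is the only place the hypothesis enters, and it is the ``main obstacle'' only in the sense of making sure the bookkeeping lands on it: every other clause is forced. The statement for $\sigma^R(G)$ follows by the conjugate symmetry already built into Theorem~\ref{theorem:lnsequiv-withsigma}.
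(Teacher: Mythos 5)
Your proof is correct and takes essentially the same approach as the paper's: both directions are verified via conditions (a)--(d) of Theorem~\ref{theorem:lnsequiv-withsigma}, with the maintenance property coming from the identical ordinary options and the proviso from the hypothesis together with the automatic Left \U-strongness conferred by~$\Sigma^L$. The only cosmetic point is that for condition~(d) you should note explicitly that a Right end-like game is automatically Right \U-strong (not merely that the Right end-like status is unchanged by the erasure), but the paper's own proof glosses this in the same way.
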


\begin{proof}
It suffices to show $\sigma^L(G) \geq_\U G$ and $G \geq_\U \sigma^L(G)$ using Theorem~\ref{theorem:lnsequiv-withsigma}.

But $G$ and $\sigma^L(G)$ have identical ordinary options, which implies the maintenance property (since every ordinary option of $G$ is paired in~$\sigma^L(G)$, and vice versa). Moreover, $G$~and $\sigma^L(G)$ are both \U-strong, so the proviso is satisfied vacuously.
\end{proof}

\begin{definition}
We say $G$ is \textbf{Left $\Sigma$-erasable (mod~\U)} if $G$ has~$\Sigma^L$, and $\sigma^L(G)$ is Left \U-strong. Right $\Sigma$-erasable is defined analogously; and we say that $G$ is \textbf{$\Sigma$-erasable (mod~\U)} if it is either Left or Right $\Sigma$-erasable.
\end{definition}

\begin{theorem}[Simplest Form Theorem]
Let $G,H \in \Mhat$.
Assume that neither $G$ nor $H$ has any \U-dominated or \U-reversible options, and furthermore that neither $G$ nor $H$ is $\Sigma$-erasable (mod~\U). If $G \equiv_\U H$, then $G \cong H$.
\end{theorem}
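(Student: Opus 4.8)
The plan is to prove this by induction on the formal birthdays of $G$ and $H$, mimicking the classical canonical-form uniqueness arguments (as in Theorem~\ref{theorem:deadendcanonical} and its many ancestors). Assume $G \equiv_\U H$ with both reduced as hypothesized. First I would show that the ordinary Left options of $G$ and $H$ match up. Fix an ordinary $G^L$. Since $G \geq_\U H$, condition~(b) of Theorem~\ref{theorem:lnsequiv-withsigma} (applied with the roles so that $G^L$ plays the part of ``$H^L$'') gives either some ordinary $H^L$ with $H^L \geq_\U G^L$ or else an ordinary $G^{LR}$ with $H \geq_\U G^{LR}$; the latter would make $G^L$ \U-reversible in $G$ (since $G \equiv_\U H \geq_\U G^{LR}$), contradicting our hypothesis. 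So some ordinary $H^L \geq_\U G^L$. Running the same argument in the other direction, there is an ordinary $G^{L'} \geq_\U H^L \geq_\U G^L$; since $G$ has no \U-dominated options and $G^{L'}, G^L$ are both ordinary Left options of $G$, we must have $G^{L'} \cong G^L$ (as forms, or at least $G^{L'} \equiv_\U G^L$, but non-domination forces them to be the same option), hence $G^L \equiv_\U H^L$. By the induction hypothesis $G^L \cong H^L$. This shows every ordinary Left option of $G$ is (isomorphic to) one of $H$, and symmetrically; likewise for Right options. Thus $G$ and $H$ have identical sets of ordinary options.

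The remaining task is to match the tombstone options, i.e., to show $G$ has $\Sigma^L$ iff $H$ does, and similarly for $\Sigma^R$. This is where the $\Sigma$-erasability hypothesis and condition~(c) of Theorem~\ref{theorem:lnsequiv-withsigma} come into play, and I expect it to be the main obstacle. Suppose, for contradiction, that $G$ has $\Sigma^L$ but $H$ does not. Since $G$ and $H$ have the same ordinary options and $H$ lacks $\Sigma^L$, we have $H = \sigma^L(G)$ (up to the isomorphism already established). Now $G$ has $\Sigma^L$, so $G$ is automatically Left \U-strong; moreover $G \equiv_\U H$ means $H$ is also Left \U-strong, i.e., $\sigma^L(G)$ is Left \U-strong. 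But that is precisely the condition for $G$ to be Left $\Sigma$-erasable mod~\U, contradicting the hypothesis that $G$ is not $\Sigma$-erasable. (One must also rule out the reverse, $H$ having $\Sigma^L$ while $G$ does not, but that is the same argument with $G$ and $H$ interchanged.) The analogous argument handles $\Sigma^R$.

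Having matched ordinary options and tombstone options on both sides, we conclude $G \cong H$. The one subtlety I would want to be careful about in writing this up is the base of the induction and the edge cases where $G$ or $H$ is a Left end or Right end (so that some option set is empty): in those cases conditions~(c) and~(d) of Theorem~\ref{theorem:lnsequiv-withsigma} are triggered by ``Left end'' rather than ``$\Sigma^L$,'' and one needs that a genuine Left end and a game with $\Sigma^L$ cannot be $\equiv_\U$ unless... actually here one checks directly: if $H$ is a Left end (no ordinary Left options, no $\Sigma^L$) and $G \equiv_\U H$, then $G$ has no ordinary Left options by the option-matching step, and if $G$ had $\Sigma^L$ then $G$ would be Left $\Sigma$-erasable precisely when $\sigma^L(G)$ — a genuine Left end — is Left \U-strong, which it always is, contradiction again; so $G$ is also a genuine Left end. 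Thus the end cases fold cleanly into the same dichotomy. The only real work is the careful bookkeeping in the option-matching step to ensure non-domination upgrades $\equiv_\U$ to $\cong$ before invoking the induction hypothesis.
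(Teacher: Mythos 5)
Your proposal is correct and follows essentially the same route as the paper's proof: first match the ordinary options via the maintenance property of Theorem~\ref{theorem:lnsequiv-withsigma} (non-reversibility rules out the reversing branch, non-domination collapses $G^{L'} \geq_\U H^L \geq_\U G^L$ to the same option, and induction upgrades $\equiv_\U$ to $\cong$), then match the tombstones by observing that a mismatch would make the side carrying $\Sigma^L$ (or $\Sigma^R$) $\Sigma$-erasable, contradicting the hypothesis. Your explicit treatment of the end-like edge cases is a harmless elaboration of the same argument, so no further comment is needed.
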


\begin{proof}
Let $G^L$ be any ordinary Left option of $G$. It cannot be the case that $G^{LR} \geq_\U H$ for some $G^{LR}$, since we assumed $G$ has no reversible options, so by the maintenance property, there is some ordinary $H^L$ with $G^L \geq_\U H^L$. The same argument shows that there is some ordinary $G^{L'}$ with $H^L \geq_\U G^{L'}$. But $G$ has no dominated options, so in fact $G^L \equiv_\U H^L \equiv_\U G^{L'}$. By induction, $G^L \cong H^L$.

The same argument shows that every ordinary $H^L$ is also a Left option of~$G^L$, and likewise for Right options. This shows that $G$ and $H$ have identical ordinary options.

Now if $H$ has $\Sigma^L$, then it must be \U-strong, so $G$ must also be \U-strong (since $G \geq_\U H$). Since $H$ is not $\Sigma$-erasable and the ordinary options of $G$ are identical to those of~$H$, it follows that $G$ must have $\Sigma^L$ as well. The same argument works for~$\Sigma^R$.
\end{proof}

\begin{definition}
Let $G$ be a partizan game. The \textbf{\U-simplest form} of $G$ is the unique augmented game $K \equiv_\U G$ that has no \U-dominated or \U-reversible options and is not $\Sigma$-erasable (mod~\U).
\end{definition}

We have shown that \U-simplest forms always exist, and they can be calculated (in any universe where $\geq_\U$ is computable) in four steps:
\begin{enumerate}
\item[(i)] Iteratively \emph{bypass} all non-atomic \U-reversible options;
\item[(ii)] \emph{Eliminate} all \U-dominated options;
\item[(iii)] \emph{Replace} all atomic \U-reversible options by $\Sigma^L$ or~$\Sigma^R$;
\item[(iv)] \emph{Erase} $\Sigma^L$ and/or $\Sigma^R$, if erasable.
\end{enumerate}

\begin{example}
Recall the example at the beginning of this section:
\[
A = 0\bar{1}\bar{4}\bar{5}; \qquad B = 0\bar{2}\bar{3}\bar{5}; \qquad \mathcal{U} = \D(A,B),
\]
and
\[
G = \combgame{\{0,\{0,\bar{1}|A\},\{0,\bar{1}|B\}||0,\bar{5}\}}.
\]
The simplest form of $G$ (mod~\U) is given by
\[
K = \combgame{\{0,\Sigma^L|0,\bar{5}\}}.
\]
Note that $K$ would no longer be \U-strong if the $\Sigma^L$ were erased.
\end{example}

\begin{example}
It is important to understand that in addition to the appearance of $\Sigma^L$ and/or $\Sigma^R$ in general simplest forms, it is also possible that new ends might arise that were not present in the original universe. For a simple example, consider
\[
G = \cgtwo{\bar{1}}{\cgstar}
\]
in the dead-ending universe \E. It is easily seen that $G \geq_\E 0$ and, therefore, Left's option $G^L = \bar{1}$ is atomic \E-reversible through~$0$. Thus by $\Sigma$-replacement,
\[
G = \cgtwo{\Sigma^L}{\cgstar}.
\]
Now the $\Sigma^L$ is erasable, so that in fact the simplest form of $G$ is given by
\[
G = \cgtwo{\cdot}{\cgstar},
\]
which is not a dead-end. We will now show that the inclusion of such games poses no difficulties: calculations involving them respect the structure of the original universe.
\end{example}

\subsection*{Congruences in $\Mhat$}

The preceding analysis shows that every $G \in \U$ admits a unique simplest form (mod~\U) in $\Mhat$. In order for the construction to be useful for anything, however, we must show that the games so obtained obey the \textbf{congruence relation}:
\[
\text{If } G \equiv_\U H, \text{ then } G + J \equiv_\U H + J.
\]
When $J \in \U$, then the congruence relation follows immediately from the definition of~$\equiv_\U$. However, it is \emph{not} in general true for all $J \in \Mhat$. In this section, we will prove that it is true for any $J \in \Mhat$ that is obtained by simplifying some $U \in \U$ (and in particular for the simplest form of~$U$).

The fundamental idea is to ``put back'' atomic reversible options in place of $\Sigma^L$ and~$\Sigma^R$ and appeal to the known equivalences on~\U.

\begin{definition}
\label{definition:u-expansion}
Let $G \in \Mhat$ and $U \in \U$. We say that $U$ is a \textbf{\U-expansion} of~$G$ provided that:
\begin{enumerate}
\item[(i)] For every ordinary $G^L$, some $U^L$ is a \U-expansion of~$G^L$;
\item[(ii)] If $G$ is not Left end-like, then every $U^L$ is a \U-expansion of some~$G^L$, as in~(i);
\item[(iii)] If $G$ is Left end-like, then every $U^L$ is either a \U-expansion of some~$G^L$, or else is atomic \U-reversible; and if $U$ is not a Left end, then at least one~$U^L$ is atomic \U-reversible.
\end{enumerate}
\end{definition}

\begin{lemma}
Let $G \in \Mhat$ and suppose that $G$ is the simplest form of some $U \in \U$. Then $G$ has a \U-expansion.
\end{lemma}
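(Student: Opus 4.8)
The plan is to argue by induction on the formal birthday of $U$, building a \U-expansion of the simplest form $G$ out of \U-expansions of the ordinary options of $G$ together with small gadgets that ``undo'' the $\Sigma$-replacements performed during simplification.

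The real work is an auxiliary fact: every ordinary option of $G$ is again the simplest form of some element of $\U$, of strictly smaller formal birthday. To see this I would trace through the reduction of $U$ to $G$: one only ever (i) bypasses a non-atomic \U-reversible option, (ii) deletes a \U-dominated option, (iii) replaces an atomic \U-reversible option by $\Sigma^L$ or $\Sigma^R$, or (iv) erases a tombstone. Step (ii) only deletes options; steps (iii)--(iv) leave the ordinary subpositions strictly below the replaced option untouched; and step (i) introduces as new ordinary options exactly the options of the game through which something reverses --- a follower of a follower of $U$, hence in $\U$ by hereditary closure. Chasing this shows each surviving ordinary option of $G$ is the simplest form of an (iterated) follower of $U$, so the inductive hypothesis applies. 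I would prove this as a simultaneous induction with a second invariant that I also need later: throughout the reduction, every subposition of the current form that is a genuine Left (resp.\ Right) end $P$ satisfies $P \geq_\U X$ for some genuine Left (resp.\ Right) end $X \in \U$. The only delicate step for this invariant is $\Sigma$-erasure, where a subposition can newly become a genuine end; but its erased tombstone was placed by an earlier atomic-reversibility replacement whose reverse-through is a genuine end, and one follows the resulting inequality back through the (value-preserving) reductions and the invariant into $\U$.

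Now the construction. For each ordinary Left option $G^L$ fix, by the inductive hypothesis, a \U-expansion $E(G^L) \in \U$, and similarly for Right options. If $G$ is Left end-like then, by the reduction analysis above, there is a genuine Left end $X \in \U$ with $G \geq_\U X$ (when $G$ has $\Sigma^L$, $X$ is the reverse-through of the replacement that created it; when $G$ is a genuine Left end, $X$ is the reverse-through of the replacement that was subsequently erased, or simply $U$ itself when $U$ is already a Left end). Take the gadget $Z^L := \cgtwo{0}{X}$, which lies in $\U$ by dicotic closure. Let $U'$ be the game whose ordinary Left options are the $E(G^L)$, together with $Z^L$ whenever $G$ is Left end-like, and symmetrically on the Right; $U'$ carries no tombstones. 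Then $U' \in \U$, since all of its options lie in $\U$ and one invokes dicotic closure when $U'$ has moves on both sides (and hereditary/additive closure in the remaining boundary cases). One checks $U' \equiv_\U G$ by repeatedly substituting each $E(G^L)$ for the corresponding $G^L$ --- substituting a single $\equiv_\U$-equivalent option preserves $\equiv_\U$, directly from Theorem~\ref{theorem:lnsequiv-withsigma} --- and then applying Lemma~\ref{lemma:reversible2} in reverse to trade each $Z^L$ for $\Sigma^L$.

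Finally one verifies conditions (i)--(iii) and their Right-analogues. Conditions (i) and (ii) are immediate: each ordinary $G^L$ is covered by $E(G^L)$, and when $G$ is not Left end-like the only Left options of $U'$ are the $E(G^L)$. For (iii), when $G$ is Left end-like every Left option of $U'$ is some $E(G^L)$ or the gadget $Z^L$, and $Z^L$ is genuinely atomic \U-reversible in $U'$ --- it has the ordinary Right option $X$, a genuine Left end, and $U' \geq_\U X$ since $U' \equiv_\U G$ and $G \geq_\U X$ --- with $Z^L$ present exactly when $U'$ fails to be a genuine Left end. I expect the main obstacle to be the auxiliary fact of the second paragraph together with the accompanying end-inequality invariant: because \U-simplest forms can fall outside $\U$ (they may carry tombstones, or new ends such as $\cgtwo{\cdot}{\cgstar}$), one must ensure that every ingredient of $U'$ --- the expansions of the ordinary options and the reverse-through ends feeding the gadgets --- can always be located inside $\U$, which is precisely what these two invariants deliver.
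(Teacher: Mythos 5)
Your bottom-up construction has a genuine flaw, and it is located exactly where you predicted the difficulty would \emph{not} be: in the gadget. Take the paper's own example $\U = \E$, $U = \combgame{\{\bar{1}|\cgstar\}}$, whose simplest form is the genuine Left end $G = \combgame{\{\cdot|\cgstar\}}$. The only Left end $X \in \E$ with $G \geq_\E X$ is $X = 0$ (your recipe also selects $X=0$, the reverse-through of the erased replacement), so your gadget is $Z^L = \{0 \mid 0\} = \cgstar$ and $U' = \combgame{\{\cgstar|\cgstar\}}$. But $U'$ is not Left $\E$-strong: in $U' + \bar{1}$ Left's only move is to $\cgstar + \bar{1}$; Right declines the ``bait'' and plays in $\bar{1}$, leaving $\cgstar$; Left must move to $0$, and Right, unable to move, wins. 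So $o(U' + \bar{1}) \leq \P$, whence $U' \not\equiv_\E G$ (a Left end such as $G$ is automatically Left $\E$-strong) and $U' \not\geq_\E 0$, so $Z^L$ is not atomic $\E$-reversible in $U'$ and condition (iii) of Definition~\ref{definition:u-expansion} fails. The structural problem is that $\{0 \mid X\}$ is a one-shot threat: Right can simply ignore it and move in the end component, forcing Left to spend the Left option $0$; so adjoining it does not preserve Left \U-strength, and no choice of $X$ repairs this. Your verification also hides a circularity that masks the failure: you certify that $Z^L$ is \U-reversible in $U'$ from $U' \equiv_\U G$, but $U' \equiv_\U G$ was itself to be obtained by applying Lemma~\ref{lemma:reversible2} to $Z^L$, which presupposes exactly that reversibility.

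This is why the paper argues top-down rather than bottom-up: it first strips $U$ of \U-dominated and ordinary \U-reversible options (which leaves the simplest form unchanged), then deletes only those atomic \U-reversible options whose removal changes neither the value nor the end-status of $U$, and takes the resulting $V$ as the expansion. The witnesses demanded by condition (iii) are then $U$'s \emph{own} surviving atomic reversible options --- genuine members of $\U$ that already witness the strength of $U$ --- and each ordinary $G^L$ is matched with the $V^L$ from which it arises by $\Sigma$-replacement and $\Sigma$-erasure, so the induction goes through without manufacturing anything. Your auxiliary claims in the second paragraph (each ordinary option of $G$ is the simplest form of a member of $\U$; genuine end subpositions satisfy $\geq_\U$ some end of $\U$) are plausible and in the right spirit, but they cannot substitute for a correct witness: in a general universe there need be no element of $\U$ of any fixed dicotic-closure shape such as $\{0 \mid X\}$ that certifies strength, which is precisely why the witness must be inherited from $U$ itself.
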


\begin{proof}
We can assume that $U$ has no \U-dominated options, since eliminating \U-dominated options does not change the simplest form of~$U$. Likewise, we can assume that $U$ has no ordinary \U-reversible options. Now let $V$ be obtained from $U$ by removing any atomic \U-reversible options that can be removed without (i)~changing the value of~$U$, or (ii)~turning $U$ into an end. We claim that $V$ is a \U-expansion of~$G$.

Now $G$ is obtained from $V$ by $\Sigma$-replacement and $\Sigma$-erasure (applied to various subpositions of~$V$). Thus every ordinary $G^L$ is obtained from some $V^L$ in this way. By induction, $V^L$~is a \U-expansion of~$G^L$, which proves conditions (i) and (ii) in Definition~\ref{definition:u-expansion}.

For (iii), suppose that $G$ is Left end-like. If $G$ has no ordinary Left options, then every $U^L$ must be atomic \U-reversible; otherwise, $G$~has $\Sigma^L$, and so must be obtained by $\Sigma$-replacement of at least one atomic \U-reversible~$U^L$.
\end{proof}

\begin{theorem}
\label{theorem:u-expansion-sums}
Let $G,H \in \hat\U$ and $U,V \in \U$.
If $U$ is a \U-expansion of $G$ and $V$ is a \U-expansion of $H$, then $U + V$ is equal (mod~\U) to a \U-expansion of $G + H$.
\end{theorem}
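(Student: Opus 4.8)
The plan is to build the required $\U$-expansion of $G+H$ by reconciling the two given expansions $U$ and $V$ at those subpositions where the tombstone structure of $G+H$ may differ from the ``naive sum'' $U+V$. The definition of $+$ on $\Mhat$ shows that $G+H$ has $\Sigma^L$ precisely when $G$ and $H$ are both Left end-like and at least one has $\Sigma^L$; meanwhile $U+V$ is a genuine game in $\U$, so it has no tombstones at all and instead carries whatever atomic $\U$-reversible Left options $U$ and $V$ contributed. The four cases for whether $G$, $H$ are Left end-like govern the bookkeeping: if neither $G+H$ nor any relevant follower is Left end-like, then $U+V$ already satisfies Definition~\ref{definition:u-expansion}(i)--(ii) by a routine induction on the options $G^L+H$, $G+H^L$, $U^L+V$, $U+V^L$ and closure of $\U$-expansion under the inductive hypothesis. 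The work is concentrated in the case where $G+H$ is Left end-like.

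First I would dispose of the maintenance/ordinary-option conditions (i) and (ii): every ordinary Left option of $G+H$ is $G^L+H$ or $G+H^L$, and by hypothesis some $U^L$ expands $G^L$ (resp.\ some $V^L$ expands $H^L$), so by induction $U^L+V$ (resp.\ $U+V^L$) is equal mod $\U$ to a $\U$-expansion of that option; since the statement only asks for $U+V$ to be equal mod $\U$ to an expansion, I can absorb these ``equal mod $\U$'' adjustments into the final game we produce. For condition (ii) I use that when $G+H$ is not Left end-like, at least one of $G$, $H$ is not Left end-like, say $G$; then by Definition~\ref{definition:u-expansion}(ii) for $U$ every $U^L$ expands some $G^L$, so every Left option $U^L+V$ of $U+V$ expands the Left option $G^L+H$ of $G+H$, and the options of the form $U+V^L$ are handled symmetrically (or, if $H$ is also not end-like, directly).

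The main obstacle is verifying (iii) when $G+H$ is Left end-like, because then $U+V$ need not be a Left end even though $G+H$ has $\Sigma^L$, and we must exhibit an atomic $\U$-reversible $U^L$ among the Left options of (a game equal mod $\U$ to) $U+V$. Here $G$ and $H$ are both Left end-like. If both are Left \emph{ends}, then by (iii) for $U$ and $V$ every $U^L$ and $V^L$ is atomic $\U$-reversible, hence so is $U^L+V$ and $U+V^L$ (using that $G\geq_\U$-strength is preserved under $+$: if $U\geq_\U U^{LR}$ with $U^{LR}$ a Left end, then $U+V\geq_\U U^{LR}+V$, and $U^{LR}+V$ is a Left end since $V$ is, so $U^L+V$ reverses through a Left end of $U+V$); and if additionally $U+V$ is not a Left end, one of $U$, $V$ is not, so it supplies such an option. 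If instead $G$ has $\Sigma^L$ while $H$ is a Left end (or symmetrically), then by (iii) at least one $U^L$ is atomic $\U$-reversible, say $U\geq_\U U^{LR}$ with $U^{LR}$ a Left end; then $U+V\geq_\U U^{LR}+V$ by the congruence for genuine $\U$-games, and $U^{LR}+V$ is a Left end, so $U^L+V$ is atomic $\U$-reversible in $U+V$, as required. The only remaining subtlety is that $U+V$ might itself be a Left end while $G+H$ is Left end-like via $\Sigma^L$ — but then $U$ and $V$ are both Left ends, forcing $G$ and $H$ to be Left ends too (since an expansion of a non-end is a non-end by condition (iii)'s last clause), so $G+H$ is a Left end and no tombstone clause applies. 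Assembling the three regimes, and taking the final game to be $U+V$ with the inductively-produced ``equal mod $\U$'' replacements substituted at ordinary options, yields a $\U$-expansion of $G+H$ equal mod $\U$ to $U+V$; the Right-option conditions follow by the symmetric argument under conjugation.
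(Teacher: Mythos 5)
Your proposal founders on the two places where the paper's proof actually has to work, and both failures stem from the same unwarranted assumption: that an expansion of a Left end-like game is itself a Left end. The first gap is the mixed case, which you dismiss as routine. Suppose $G$ is not Left end-like but $H$ is (so $G+H$ is not Left end-like). Definition~\ref{definition:u-expansion}(iii) allows $V$ to have Left options $V^{L_1}$ that are atomic \U-reversible and are \emph{not} expansions of any $H^L$; then $U+V^{L_1}$ is a Left option of $U+V$ that expands no ordinary Left option of $G+H$, and since $G+H$ is not Left end-like, condition (ii) fails outright for $U+V$. Your closing step only substitutes equivalent games \emph{at} the ordinary options corresponding to $G^L+H$ and $G+H^L$; it never deletes these extra options, and your claim that they are ``handled symmetrically'' has no content when $H$ is end-like. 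The paper's proof exists precisely to remove them: $U+V^{L_1}$ is \emph{ordinary} reversible through $U+V^{L_1R_1}$ (not a Left end, since $U$ is not), so it can be bypassed, and the bypass products $U^L+V^{L_1R_1}\leq_\U U^L+V$ are then discarded as dominated, yielding a game $\equiv_\U U+V$ that really does satisfy (ii).

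The same assumption breaks your main case. When $G$ and $H$ are both Left end-like you argue that $U^{L}+V$ is atomic \U-reversible because ``$U^{LR}+V$ is a Left end since $V$ is'' --- but $V$ need not be a Left end when $H$ is Left end-like (indeed even when $H$ is a Left end); the last clause of (iii) explicitly contemplates non-end expansions, and that is the whole reason atomic reversible options appear in expansions at all. If neither $U$ nor $V$ is a Left end, the options $U^{L_1}+V$ with $U^{L_1}$ atomic reversible revert only through non-ends, so they are merely ordinary reversible: they neither qualify under (iii) nor furnish the atomic reversible witness that (iii) demands of a non-end expansion. The paper's Case 4a resolves this by bypassing all such options on both sides and observing that the surviving cross terms $U^{L}+V^{L_1R_1}$, with $V^{L_1R_1}$ a Left end, \emph{are} atomic \U-reversible in the reduced game; nothing in your argument produces these options. (Your final ``remaining subtlety'' is also off: $U$ and $V$ both being Left ends does not force $G$ and $H$ to be Left ends, since $G$ may have $\Sigma^L$ and no ordinary Left options --- though that situation is harmless, because the last clause of (iii) is vacuous when $U+V$ is a Left end.) In short, the missing idea is the bypass-then-eliminate-dominated-options maneuver applied inside $U+V$, which is the engine of the paper's proof.
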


\begin{proof}
There are four cases. In each case, we show that it is possible to apply a series of reductions to the Left options of $U + V$, resulting in an expression that meets the Left conditions of Definition~\ref{definition:u-expansion}. The analogous Right conditions then follow by symmetry.

\vspace{0.05in}\noindent\emph{Case 1}: Neither $G$ nor $H$ is Left end-like. Then for every ordinary~$G^L$, we know that some $U^L$ is a \U-expansion of~$G^L$, so by induction, $U^L + V$ is equal to a \U-expansion of $G^L + H$. Similarly, for every ordinary $H^L$, some $U + V^L$ is equal to a \U-expansion of $G + H^L$. By the definition of \U-expansion, this must cover all the Left options of $U$ and~$V$, and the conclusion follows by Replacement.

\vspace{0.05in}\noindent\emph{Case 2}: $G$ is Left end-like and $H$ is not. Consider any atomic \U-reversible Left option~$U^{L_1}$, reversible through $U^{L_1R_1} \leq_\U U$. Then $U^{L_1} + V$ is reversible through $U^{L_1R_1} + V$, and since $V$ is not a Left end, we can bypass it. Therefore
\[
U + V \equiv_\U \combgame{\{U^{L_1R_1} + V^L,\ U^{\smash{L'}} + V,\ U + V^L | (U+V)^\mathcal{R}\}},
\]
where $U^{L'}$ is understood to range over all Left options of $U$ except~$U^{L_1}$.

But since $U^{L_1R_1} \leq_\U U$, all the options of the form $U^{L_1R_1} + V^L$ are dominated and can be excluded. We can repeat the procedure with any atomic \U-reversible Left option of~\U, so that in fact
\[
U + V \equiv_\U \combgame{\{U^{\smash{L'}} + V,\ U + V^L | (U+V)^\mathcal{R}\}},
\]
where $U^{L'}$ ranges over all Left options of $U$ that are not atomic \U-reversible.
Since $G + H$ is not Left end-like, the rest of the argument proceeds just as in Case~1.

\vspace{0.05in}\noindent\emph{Case 3}: $H$ is Left end-like and $G$ is not. This case is identical to Case~2, by symmetry.

\vspace{0.05in}\noindent\emph{Case 4a}: $G$ and $H$ are both Left end-like, and neither $U$ nor $V$ is a Left end. Consider a $U^{L_1}$ that is atomic \U-reversible through~$U^{L_1R_1}$. It can be bypassed, so that
\[
U + V \equiv_\U \combgame{\{U^{L_1R_1} + V^L,\ U^{\smash{L'}} + V,\ U + V^L | (U+V)^\mathcal{R}\}},
\]
exactly as in Case~2. Also as in Case~2, options of the form $U^{L_1R_1} + V^L$ are dominated and can be excluded, eventually obtaining
\[
U + V \equiv_\U \combgame{\{U^{\smash{L'}} + V,\ U + V^L | (U+V)^\mathcal{R}\}}.
\]
We can repeat with any $V^{L_1}$ that is atomic \U-reversible:
\[
U + V \equiv_\U \combgame{\{U^L + V^{L_1R_1},\ U^{\smash{L'}} + V,\ U + V^{\smash{L'}} | (U+V)^\mathcal{R}\}}.
\]
Options of the form $U^L + V^{L_1R_1}$ are dominated and can be excluded---\emph{except} in the specific case where $U^L$ is atomic \U-reversible (in which case $U^L + V$ has already been eliminated). But in that case, since $V^{L_1R_1}$ is a Left end, it follows that $U^L + V^{L_1R_1}$ is also atomic \U-reversible. We are left with:
\[
U + V \equiv_\U \combgame{\{T,\ U^{\smash{L'}\vphantom{L}} + V,\ U + V^{\smash{L'}} | (U+V)^\mathcal{R}\}},
\]
where $U^{L'}$ ranges over Left options of $U$ that are not atomic \U-reversible; $V^{L'}$ ranges over Left options of $V$ that are not atomic \U-reversible; and $T$ ranges over various (and at least one) atomic \U-reversible options of $U+V$. Since $G+H$ is Left end-like, this completes the proof.

\vspace{0.05in}\noindent\emph{Case 4b}: $G$ and $H$ are both Left end-like, and at least one of $U$ or $V$ is a Left end. Then for every any atomic \U-reversible $U^L$, the options $U^L + V$ of $U + V$ are already atomic \U-reversible; likewise for any such~$V^L$. It follows that $U + V$ is already a \U-expansion of $G + H$.
\end{proof}

\begin{theorem}
\label{theorem:addition-rule}
If $G,H,J \in \hat\U$ and $G \equiv_\U H$, then $G + J \equiv_\U H + J$.
\end{theorem}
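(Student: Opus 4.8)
The plan is to push the whole computation down into $\U$ itself, where the congruence relation is automatic, using Theorem~\ref{theorem:u-expansion-sums} as the bridge. Since $G$, $H$, and $J$ all lie in $\hat\U$, fix $\U$-expansions $U,V,W\in\U$ of $G$, $H$, and $J$ respectively. The one supporting fact I would want first is that a game is always equivalent to any of its $\U$-expansions: if $P\in\U$ is a $\U$-expansion of $K\in\Mhat$, then $P\equiv_\U K$. Granting this, the theorem is a short chase. By Theorem~\ref{theorem:u-expansion-sums}, $U+W$ equals (mod~$\U$) some $\U$-expansion of $G+J$, which by the supporting fact is itself $\equiv_\U G+J$; hence $G+J\equiv_\U U+W$, and symmetrically $H+J\equiv_\U V+W$. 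On the other hand, the supporting fact gives $U\equiv_\U G$ and $V\equiv_\U H$, so using the hypothesis $G\equiv_\U H$ we obtain $U\equiv_\U V$; and since $U$, $V$, $W$ all lie in $\U$---where adding a fixed game of $\U$ respects $\equiv_\U$, by associativity in the monoid $\Mhat$ together with additive closure of $\U$---this yields $U+W\equiv_\U V+W$. Concatenating, $G+J\equiv_\U U+W\equiv_\U V+W\equiv_\U H+J$.

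So everything reduces to the supporting fact, and that is where the real work lies. I would prove it by induction, verifying conditions (a)--(d) of Theorem~\ref{theorem:lnsequiv-withsigma} in both directions. The maintenance property is routine: each ordinary option of $P$ either is a $\U$-expansion of an ordinary option of $K$---matched up through the induction hypothesis---or is atomic $\U$-reversible and hence irrelevant to maintenance; and every ordinary option of $K$ is expanded by one of $P$. The proviso is the delicate part. In the direction $P\geq_\U K$, if $K$ is Left end-like then $P$ must be Left $\U$-strong, and this holds because $P$ either is a Left end or (by Definition~\ref{definition:u-expansion}(iii)) has an atomic $\U$-reversible Left option $P^{L_1}$, which witnesses $P\geq_\U P^{L_1R_1}$ for a Left end $P^{L_1R_1}$ and so forces $P$ to be Left $\U$-strong by condition~(c) of Theorem~\ref{theorem:lnsequiv-withsigma}. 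Conceptually this is nothing but the simplification process (Lemma~\ref{lemma:reversible2} and the $\Sigma$-Erasure Lemma) run in reverse---replacing each $\Sigma$ by one or more atomic $\U$-reversible options, or adjoining such options to an end---so it preserves value; but making the backward direction airtight requires care, since one must rule out that these reinstated options create comparisons not already present.

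The main obstacle, then, is the supporting fact, and in particular the fact that Theorem~\ref{theorem:u-expansion-sums} returns \emph{some} $\U$-expansion of $G+J$, not a canonically chosen one, so one genuinely needs value-preservation for \emph{every} $\U$-expansion and not merely for the ones produced by the explicit construction of the preceding lemma. Once that is secured, the rest is purely formal: transitivity of $\equiv_\U$, and the already-noted fact that the congruence relation holds when the added game lies in $\U$. It is worth remarking that one cannot circumvent all this by passing $G$ and $H$ to their (isomorphic, by the Simplest Form Theorem) $\U$-simplest forms, because doing so would itself require the congruence relation for a general $J\in\hat\U$---exactly the statement being proved.
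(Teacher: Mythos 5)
Your argument is exactly the paper's: fix $\U$-expansions $U,V,W$ of $G,H,J$, invoke Theorem~\ref{theorem:u-expansion-sums} to get $G+J \equiv_\U U+W$ and $H+J \equiv_\U V+W$, and then use that $W \in \U$ so that adding $W$ respects $\equiv_\U$, chaining through $U \equiv_\U G \equiv_\U H \equiv_\U V$. The ``supporting fact'' you isolate---that a game is equivalent (mod~$\U$) to any of its $\U$-expansions---is indeed needed and is used tacitly in the paper's proof (which simply asserts $U \equiv G$ and $G+J \equiv U+W$); it is true, and is established essentially the way you indicate, by running the $\Sigma$-replacement (Lemma~\ref{lemma:reversible2}) and $\Sigma$-erasure reductions in reverse together with Theorem~\ref{theorem:lnsequiv-withsigma}, so your more explicit treatment is a faithful filling-in rather than a different route.
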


\begin{proof}
Let $U$, $V$, and $W$ be \U-expansions of $G$, $H$, and $J$, respectively. Then by Theorem~\ref{theorem:u-expansion-sums}, $U + W$ and $V + W$ are equal to \U-expansions of $G + J$ and $H + J$, respectively. Therefore
\[
U \equiv G \equiv H \equiv V \pmod\U,
\]
and
\[
G + J \equiv U + W \quad\text{and}\quad H + J \equiv V + W \pmod\U.
\]
But since $W \in \U$, we have that $U + W \equiv_\U V + W$, completing the theorem.
\end{proof}

%
%

\section{Mis\`ere \boldsc{Domineering}}
\label{section:domineering}
\suppressfloats[t]

As an application of the preceding theory, we will give a complete description of the universal closure of \textsc{Domineering}, and we will calculate the simplest forms of $2 \times n$ rectangles for small~$n$.

It is clear that the Left ends of \textsc{Domineering} are sums of $1 \times n$ rectangles, since any contiguous region that is not a $1 \times n$ rectangle must contain two vertically adjacent spaces and, therefore, admit a Left option. In order to isolate the \textsc{Domineering} universe, then, it suffices to characterize the values of $1 \times n$ rectangles. Denote such a rectangle by~$D_n$.


\begin{theorem}
\label{theorem:domineering}
For all $n \geq 0$, the $1 \times n$ \textsc{Domineering} rectangle $D_n$ has the exact value
\begin{center}
\begin{tabular}{l@{\quad if\quad}l}
$k\cdot(\bar{1}0)_\sh$ & $n = \mathspoofwidth{6k + 0}{6k}$ \,or\, $6k + 1$ \bigstrut \\
$k\cdot(\bar{1}0)_\sh + \bar{1}$ & $n = 6k + 2$ \,or\, $6k + 3$ \bigstrut \\
$k\cdot(\bar{1}0)_\sh + \bar{1}0$ & $n = 6k + 4$ \bigstrut \\
$k\cdot(\bar{1}0)_\sh + \bar{2}$ & $n = 6k + 5$ \bigstrut \\
\end{tabular}
\end{center}
\end{theorem}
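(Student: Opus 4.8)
The plan is to work throughout with the \emph{absolute} ordering of Left dead-ends. A $1 \times n$ strip contains no two vertically adjacent cells, so it is a Left end with no Left options, hence a Left dead-end; and its Right options are exactly $D_a + D_b$ for all $a,b \geq 0$ with $a+b = n-2$ (while $D_0$ and $D_1$ have no options at all). By Theorem~\ref{theorem:de-comparison} and the remarks following it, comparison and equality of Left dead-ends are independent of the universe, so it suffices to prove the stated equalities absolutely, by verifying conditions (a) and~(b) of Theorem~\ref{theorem:de-comparison}. The whole argument is then a strong induction on~$n$. First I would dispatch the base cases $n \leq 5$ by direct computation: $D_0 = D_1 = 0$, $D_2 = D_3 = \bar{1}$, $D_4 = \bar{1}0$, and $D_5 = \combgame{\{\cdot|\bar{1}\}} = \bar{2}$ (these are the instances of the theorem with $k = 0$).

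The inductive step for $n \geq 6$ reduces to the single identity $D_n = D_{n-6} + (\bar{1}0)_\sh$: granting it, the induction hypothesis gives $D_{n-6} = V_{n-6}$ (writing $V_m$ for the value claimed in the theorem), and a one-line check in each of the six residue classes shows $V_{n-6} + (\bar{1}0)_\sh = V_n$, since adding $(\bar{1}0)_\sh$ merely advances $k$ to $k+1$; note in particular $(\bar{1}0)_\sh = D_6$ and $\bar{1} + \bar{1}0 = (\bar{1}0)_\sh$, which also explains why each $V_n$ already lies in $\D(\bar{1}0)$. I would prove the identity $D_n = D_{n-6} + (\bar{1}0)_\sh$ from Theorem~\ref{theorem:de-comparison} by comparing option sets. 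For $D_{n-6} + (\bar{1}0)_\sh \geq D_n$: every Right option of the left side is \emph{equal} to a Right option of $D_n$ --- namely $D_{n-6} + \bar{1}0 = D_{n-6} + D_4$ is literally one, while $D_{n-6}^R + (\bar{1}0)_\sh = D_a + D_c + (\bar{1}0)_\sh$ (with $a+c = n-8$) equals $D_a + D_{c+6}$ by the induction hypothesis (which gives $D_{c+6} = D_c + (\bar{1}0)_\sh$ since $c+6 < n$), again a Right option of $D_n$ --- so condition~(a) is trivial and (b) is vacuous. For the reverse $D_n \geq D_{n-6} + (\bar{1}0)_\sh$: a Right option $D_a + D_b$ of $D_n$ with $\max(a,b) \geq 6$, say $b \geq 6$, equals $(D_a + D_{b-6}) + (\bar{1}0)_\sh$ by the induction hypothesis (using $D_b = D_{b-6} + (\bar{1}0)_\sh$ since $6 \leq b < n$), and $D_a + D_{b-6}$ is a Right option of $D_{n-6}$, so this matches a Right option of $D_{n-6} + (\bar{1}0)_\sh$.

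The only Right options $D_a + D_b$ not reached by the induction hypothesis are those with $\max(a,b) \leq 5$, which forces $n \leq 12$; for these finitely many $n$ I would verify $D_n \geq D_{n-6} + (\bar{1}0)_\sh$ directly, computing the relevant option values from $D_0,\dots,D_5$ and checking a short list of absolute dead-end inequalities --- e.g.\ $\bar{1} \geq \bar{1}0$ and $\bar{2} \geq \bar{1}0$ (immediate from Lemma~\ref{lemma:de-subset}), $\bar{3} \geq (\bar{1}0)_\sh$, and $\bar{4} \geq \bar{1} + 2\cdot(\bar{1}0)$ (each a one-step application of Theorem~\ref{theorem:de-comparison}). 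I expect the main obstacle to be bookkeeping rather than any conceptual difficulty: keeping the six residue classes aligned, correctly reducing the small sums that arise as option values (in particular the ``carry'' $\bar{1} + \bar{1}0 = (\bar{1}0)_\sh$ and the simplest forms of sums such as $\bar{1}0 + \bar{1}0$), and confirming that every dominated option has indeed been discarded. Once $D_n = D_{n-6} + (\bar{1}0)_\sh$ is in hand, translating back into the six-case table of the statement is pure arithmetic.
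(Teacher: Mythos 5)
Your proposal is correct and follows essentially the same route as the paper: a strong induction establishing the period-six recurrence $D_n = D_{n-6} + (\bar{1}0)_\sh$ by matching Right options via the absolute dead-end comparison criterion (Theorem~\ref{theorem:de-comparison}), then translating back into the six-case table. The only difference is bookkeeping: the paper verifies $n \leq 18$ by direct calculation and then argues literal equality of option sets for $n \geq 19$, whereas you verify only $n \leq 5$ directly and dispose of the leftover options with $\max(a,b) \leq 5$ (which occur only for $n \leq 12$) by a short list of explicit dead-end inequalities such as $\bar{3} \geq (\bar{1}0)_\sh$ and $\bar{4} \geq \bar{1} + 2\cdot(\bar{1}0)$, all of which do check out.
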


\noindent Here $k \cdot (\bar{1}0)_\sh$ means the sum of $k$ copies of $(\bar{1}0)_\sh$. Recall that $(\bar{1}0)_\sh = \bar{1}0 + \bar{1}$, so Theorem~\ref{theorem:domineering} implies that the Left ends of \textsc{Domineering} are generated by $\bar{1}$ and $\bar{1}0$ and, therefore, that $\mathcal{D}(\bar{1}0)$ is the universal closure of \textsc{Domineering}.

\begin{proof}
For $n \leq 18$ the theorem is easily verified by direct calculation, so assume $n \geq 19$. It suffices to show that $D_n = D_{n-6} + (\bar{1}0)_\sh$.

Now the options of $D_n$ have the form $D_a + D_b$, with $a + b = n - 2$. Since $n \geq 13$, we may assume without loss of generality that $b \geq 6$. By induction, we may assume that $D_b = D_{b-6} + (\bar{1}0)_\sh$, so that
\[D_a + D_b = D_a + D_{b-6} + (\bar{1}0)_\sh.\]
This shows that every option of $D_n$ is an option of $D_{n-6} + (\bar{1}0)_\sh$. For the converse, the preceding argument shows that every option of the form $D_{n-6}^R + (\bar{1}0)_\sh$ is an option of $D_n$. To see that $D_{n-6} + \bar{1}0$ is also an option of $D_n$, note that either $a \geq 6$ or $b-6 \geq 6$, so one of $D_a$ or $D_{b-6}$ must already have $(\bar{1}0)_\sh$ as a summand.
\end{proof}

It is worth noting that $1 \times n$ \textsc{Domineering} is the one-sided partizan variant of the octal game \textsc{Dawson's Kayles}, and the proof of Theorem~\ref{theorem:domineering} is essentially the same as the proof of the classical Octal Periodicity Theorem~\cite{BCG01}~\cite{GS56}.

\begin{example}
Using \texttt{cgsuite}~\cite{cgsuite}, we can compute the values of $2 \times n$ \textsc{Domineering} rectangles in $\mathcal{D}(\bar{1}0)$. Denote such a rectangle by $\textsc{Dom}(2 \times n)$; then:

\begin{center}
\begin{tabular}{c@{~}c@{~}l}
$\textsc{Dom}(2 \times 1)$ & $=$ & $1$ \vspace{0.1in} \\
$\textsc{Dom}(2 \times 2)$ & $=$ & $\pm1$ \vspace{0.1in} \\
$\textsc{Dom}(2 \times 3)$ & $=$ & $\combgame{\{2,\pm1||\bar{1}|0,\Sigma^R\}}$ \vspace{0.1in} \\
$\textsc{Dom}(2 \times 4)$ & $=$ & $\combgame{\{2,\pm1|0|||\bar{1}|0,\Sigma^R||\cdot\}}$ \vspace{0.1in} \\
$\textsc{Dom}(2 \times 5)$ & $=$ & $\displaystyle
\bigg\{\combgame{
\{3,\{2,\pm1|0\}||0|1\},\pm\kern-2pt\{2,\pm1|0\}|||}$
\\
& &
$\displaystyle\qquad\combgame{\{\{0|1\},\{0|*,\bar{1}0,\pm1\}||\pm(*,1),\{*|*\},\{\bar{1}0|0,\bar{1},\Sigma^R\}\}
}\bigg\}$
\end{tabular}
\end{center}

\noindent Here $\pm1 = \combgame{\{1|\bar{1}\}}$, as per the standard (normal play) convention.

$\textsc{Dom}(2 \times 4)$ is an example of a game whose simplest form is not dead-ending, even though $\mathcal{D}(\bar{1}0)$ is a dead-ending universe.
\end{example}


\section{What's Next?}
\label{section:whats-next}

Taken as a whole, the material in Sections \ref{section:left-ends}--\ref{section:invariant-forms} gives a fairly clear picture of the general subuniverse $\U \subset \E$. So long as $\U$ is computable in the sense of Corollary~\ref{corollary:computable}, we have an algorithm for evaluating $\geq_\U$ and, consequently, an algorithmic construction of \U-simplest forms.

It is natural, then, to wonder about mis\`ere universes not contained in~\E. Intriguingly, the proof of the Simplest Form Theorem, as was already mentioned in the preamble to Section~\ref{section:invariant-forms}, does not require the assumption $\U \subset \E$. Thus simplest forms exist, and can be obtained in essentially the same way, in \emph{any} universe.


However, while simplest forms can be defined and shown to exist in any universe, they are \emph{computable} only when $\geq_\U$ is computable. Since much of the material in Sections \ref{section:left-ends} and~\ref{section:comparison}, and in particular Theorem~\ref{theorem:de-truncation}, requires that $\U \subset \E$, it is unclear how computability of $\geq_\U$ might be established in more general universes. Extending Theorem~\ref{theorem:de-truncation} is therefore an enticing direction for further research.



Santos \cite{San23} has recently introduced a promising family of games known as \textbf{blocking games}, which extend the dead-ending games and enjoy many of their properties.
\begin{definition}[Santos]
A Left end $B$ is \textbf{Left blocked} if, for every~$B^R$, either:
\begin{enumerate}
\item[(i)] $B^R$ is Left blocked, or
\item[(ii)] Some $B^{RL}$ is Left blocked.
\end{enumerate}
A game $G$ is a \textbf{blocking game} if every end in $G$ is blocked. The set of blocking games is denoted by~$\mathcal{B}$.
\end{definition}
Thus if $B$ is Left blocked, then for every move by Right, Left can, at worst, revert it to another Left end in one move.


\subsection*{A Few Examples}

We offer some cursory remarks on more general universes here, leaving a more systematic investigation as an open problem.

The simplest universes not contained in $\E$ are $\D(\combgame{\{\cdot|1\}})$ and $\D(\combgame{\{\cdot|\cgstar\}})$. Intriguingly, $\D(\combgame{\{\cdot|\cgstar\}}) \cong \D$, and $\D(\combgame{\{\cdot|1\}}) \cong \D(\bar{1})$, so these universes introduce no new structure.

\begin{proposition}
\label{proposition:non-de-star}
Let $G = \combgame{\{\cdot|\cgstar\}}$ and let $\U = \D(G)$. Then $G \equiv_\U 0$.
\end{proposition}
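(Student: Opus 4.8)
The plan is to prove $G \equiv_\U 0$ by establishing $G \geq_\U 0$ and $0 \geq_\U G$ separately via Theorem~\ref{theorem:lnsequiv}. The inequality $G \geq_\U 0$ is routine: applying Theorem~\ref{theorem:lnsequiv} to $G \geq_\U 0$, condition~(b) is vacuous because $0$ has no Left option, condition~(d) is vacuous because $G$ is not a Right end (it has $\cgstar$ as a Right option), condition~(c) holds because $0$ is a Left end and $G$, being a Left end, is automatically Left $\U$-strong, and condition~(a) holds because the only Right option of $G$ is $\cgstar$, whose Left option $0$ satisfies $0 \geq_\U 0$. For $0 \geq_\U G$, conditions~(a) and~(b) are again vacuous and~(c) holds because $0$ is Left $\U$-strong; so the entire content of the proposition reduces to condition~(d): since $0$ is a Right end, it remains to show that $G$ is \textbf{Right $\U$-strong}, i.e., that $o(G + X) \leq \N$ for every Right end $X \in \U$.

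To control the Right ends of $\U = \D(G)$, the next step is to observe that they are exactly the games $k \cdot \overline{G}$ (the sum of $k$ copies of $\overline{G}$) for $k \geq 0$, with $0 \cdot \overline{G} = 0$; dually, the Left ends of $\U$ are the games $k \cdot G$. This should follow from a routine induction on the construction of $\U$: dicotic closure never produces an end; a disjunctive sum is a Right end precisely when both summands are; conjugation interchanges Left ends and Right ends; and the only ends appearing among the followers of the $k \cdot \overline{G}$ are the $j \cdot \overline{G}$ themselves.

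Granting this, Right $\U$-strength becomes a short outcome calculation. Fix $k \geq 0$ and put Right to move on $G + k \cdot \overline{G}$. Since $k \cdot \overline{G}$ is a Right end, Right's only move is $G \to \cgstar$, reaching $\cgstar + k \cdot \overline{G}$; hence $o(G + k \cdot \overline{G}) \leq \N$ will follow once we know that Left, moving first on $\cgstar + k \cdot \overline{G}$, loses. I would prove this by induction on $k$. For $k = 0$ the position is $\cgstar$, and indeed Left loses moving first on $\cgstar$ (she must move to $0$, where Right, unable to move, wins). For $k \geq 1$, the Left options of $\cgstar + k \cdot \overline{G}$ are $k \cdot \overline{G}$ (a Right end, so the player to move --- Right --- wins immediately, being unable to move) and $\cgstar + \bigl(\cgstar + (k-1) \cdot \overline{G}\bigr)$ (the unique Left option of $k \cdot \overline{G}$ being $\cgstar + (k-1)\cdot\overline{G}$); from the latter, Right answers $\cgstar \to 0$, reaching $\cgstar + (k-1) \cdot \overline{G}$, on which Left loses moving first --- by the induction hypothesis if $k \geq 2$, and directly (it is $\cgstar$) if $k = 1$. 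So every Left option of $\cgstar + k \cdot \overline{G}$ is losing for Left, completing the induction and hence the proof.

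The step I expect to be the main obstacle is the end-characterization: verifying carefully that the closure process producing $\U$ introduces no exotic Left or Right ends beyond the $k \cdot G$ and $k \cdot \overline{G}$. This is bookkeeping rather than anything deep, and once it is in place the reduction via Theorem~\ref{theorem:lnsequiv} and the outcome induction are both very short.
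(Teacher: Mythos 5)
Your proposal is correct and follows essentially the same route as the paper: reduce everything via Theorem~\ref{theorem:lnsequiv} to showing $G$ is Right $\U$-strong, observe that the Right ends of $\D(G)$ are exactly the sums $k\cdot\overline{G}$, and verify that Right's move to $\cgstar + k\cdot\overline{G}$ wins. The only cosmetic difference is that you formalize the paper's strategy argument (Right reverts each of Left's moves on a copy of $\overline{G}$ back to $0$ until Left is forced onto $\cgstar$) as an induction on $k$, and you spell out the end-characterization bookkeeping that the paper simply asserts.
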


\begin{proof}
$G \geq_\U 0$ follows immediately from Theorem~\ref{theorem:lnsequiv} (in fact, $G \geq_\mathcal{M} 0$). For $0 \geq_\U G$, the maintenance property is similarly trivial to verify, so it suffices to show that $G$ is Right \U-strong.

So consider $G + X$, with $X \in \U$ a Right end. Then $X$ necessarily has the form $k \cdot \bar{G}$, for some $k \geq 0$; and from $G + k \cdot \bar{G}$, we claim that Right's move to $\cgstar + k \cdot \bar{G}$ is a winning move.

To see this, note that Right can respond to each of Left's moves on $\bar{G}$ with a reverting move to~$0$; eventually Left will have to move on~$\cgstar$, whence Right has no move.
\end{proof}

\begin{proposition}
\label{proposition:non-de-one}
Let $H = \combgame{\{\cdot|1\}}$ and let $\U = \D(H)$. Then $H \equiv_\U 0$.
\end{proposition}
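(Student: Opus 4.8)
The plan mirrors the proof of Proposition~\ref{proposition:non-de-star}. First I would dispatch the easy direction $H \geq_\U 0$: this is immediate from Theorem~\ref{theorem:lnsequiv} (indeed $H \geq_\mathcal{M} 0$), since the maintenance property holds via the Left option $1^L = 0$ of the unique Right option $H^R = 1$, while the proviso is automatic because $H$ is a Left end. For the reverse inequality $0 \geq_\U H$, the maintenance property is again trivial, and $0$, being a Left end, is automatically Left $\U$-strong; since $H$ is not a Right end, Theorem~\ref{theorem:lnsequiv} then reduces the problem to showing that $H$ is \emph{Right $\U$-strong}, i.e.\ $o(H + X) \leq \N$ for every Right end $X \in \U$.

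To analyze that, I would first pin down the Right ends of $\U = \D(H)$, exactly as in Proposition~\ref{proposition:non-de-star}. The followers of $H$ are $H$, $1 = \combgame{\{0|\cdot\}}$ and $0$, and the followers of $\overline{H} = \combgame{\{\overline{1}|\cdot\}}$ are $\overline{H}$, $\overline{1} = \combgame{\{\cdot|0\}}$ and $0$. Of these, the ones with no Right option are $1$, $\overline{H}$ and $0$; note that $1$ itself \emph{is} a Right end (this is the feature distinguishing the present argument from Proposition~\ref{proposition:non-de-star}, where $\cgstar$ was not), whereas $\overline{1}$ is not. Since a dicotic combination always has a Right option and a sum is a Right end iff each summand is, none of the closure operations introduces a new Right end, so every Right end of $\U$ has the form $X \cong a\cdot 1 + b\cdot\overline{H}$ with $a,b \geq 0$.

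It then remains to give Right a winning strategy moving first on $H + X = H + a\cdot 1 + b\cdot\overline{H}$. Right's opening move is $H \to 1$, reaching $(a+1)\cdot 1 + b\cdot\overline{H}$ with Left to move and at least one copy of $1$ present. From then on Right plays as follows: whenever Left moves some $\overline{H} \to \overline{1}$, Right answers by moving that $\overline{1} \to 0$, restoring a position $c\cdot 1 + d\cdot\overline{H}$ with $c \geq 1$; and if Left ever moves some $1 \to 0$, the resulting position consists solely of copies of $1$ and $\overline{H}$, so Right has no move and therefore wins (misère play). Thus Left can never profit by touching a copy of $1$, and is eventually forced to exhaust all $b$ copies of $\overline{H}$, reaching $(a+1)\cdot 1$ with Left to move; Left must then play a $1 \to 0$, again leaving Right without a move. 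In every line Right wins moving first, so $o(H+X) \leq \N$, establishing that $H$ is Right $\U$-strong and hence, with the first paragraph, that $H \equiv_\U 0$.

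This argument is short, and I do not anticipate a real obstacle; the only points calling for care are the verification that the closure operations add no new Right ends (routine, and handled at the level of detail of Proposition~\ref{proposition:non-de-star}) and the bookkeeping for Left's replies in the last step — in particular the observation that Left gains nothing by moving a copy of $1$.
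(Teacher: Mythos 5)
Your proposal is correct and follows essentially the same route as the paper: reduce to showing $H$ is Right $\U$-strong, identify the Right ends of $\D(H)$ as sums of copies of $1$ and $\overline{H}$, and give Right the strategy of first moving $H \to 1$ and then answering each $\overline{H} \to \overline{1}$ with $\overline{1} \to 0$ until Left is forced to move on a copy of $1$, leaving Right without a move. Your write-up just spells out the end-enumeration and endgame bookkeeping a bit more explicitly than the paper does.
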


\begin{proof}
As in Proposition~\ref{proposition:non-de-star}, it suffices to show that $H$ is Right \U-strong, so we consider $H + X$ with $X$ a Right end.

Now $X$ necessarily has the form $k \cdot \bar{H} + n$, for $k \geq 0$ and $n \geq 0$. From
\[
H + k \cdot \bar{H} + n
\]
Right can move to $k \cdot \bar{H} + n + 1$. This is a winning move, since Right can respond to each of Left's moves on $\bar{H}$ with a reverting move to~$0$, and eventually Left will have to move on $n + 1$, whence Right has no move.
\end{proof}

The proofs of Propositions \ref{proposition:non-de-star} and~\ref{proposition:non-de-one} bear a superficial resemblance, but with a subtle difference: \ref{proposition:non-de-star} relies on the fact that $G^R$ is a dicot (and hence cannot show up as a component of a Right end), and \ref{proposition:non-de-one} relies on the fact that $H^R$ is a dead-end (and hence cannot disturb Right's strategy if it does show up).

Remarkably, though, when we put $G$ and $H$ together, the universe strictly expands: $\D(G,H) \not\cong \D(\bar{1})$.

\begin{proposition}
\label{proposition:non-de-one-and-star}
Let $G = \combgame{\{\cdot|\cgstar\}}$, let $H = \combgame{\{\cdot|1\}}$, and let $\U = \D(G,H)$. Then $G \not\equiv_\U 0$.
\end{proposition}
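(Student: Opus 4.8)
The plan is to produce a single explicit element $X \in \U$ that distinguishes $G$ from $0$, i.e., an $X$ with $o(G + X) \neq o(0 + X) = o(X)$. The natural choice is $X = 1$. It lies in $\U$ by hereditary closure, since $1 = H^R$ and $H \in \U$; and crucially $1$ is a Right end, which is exactly the type of game that is \emph{unavailable} inside $\D(G)$ and whose absence was what made the proof of \refprop{non-de-star} go through there. So the role of $H$ in the argument is simply to drag the Right end $1$ into the universe.

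First I would note that, because $G$ is a Left end and $1$ is a Right end, play on $G + 1$ is extremely constrained: Left's only option is $G + 0 = G$, and Right's only option is $\cgstar + 1$. Hence the whole computation reduces to evaluating a handful of games of birthday at most $2$. I would check $o(1) = \sR$ (moving on $1$, Left is forced to $0$ and then the opponent has no move and wins in misère, while Right cannot move at all and wins), and $o(\cgstar + 1) = \N$ — the decisive line being that Left, moving first, plays $1 \to 0$, leaving $\cgstar$ with Right to move, where Right is forced to $0$ and then loses.

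Then I would assemble these facts. From $G + 1$ with Right to move, Right is forced into $\cgstar + 1$ with Left to move, which Left wins; and from $G + 1$ with Left to move, Left is forced into $G$, after which play runs $G \to \cgstar \to 0$ and Left loses. Therefore $o(G + 1) = \P$. Since $\P \neq \sR = o(1)$, we get $o(G + 1) \neq o(0 + 1)$, and hence $G \not\equiv_\U 0$. (Equivalently, this exhibits the Right end $1 \in \U$ as a witness that $G$ is not Right \U-strong, so $0 \not\geq_\U G$ by condition~(d) of Theorem~\ref{theorem:lnsequiv}, whereas $G \geq_\U 0$ holds as in \refprop{non-de-star}.)

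There is no real obstacle here: everything in sight has birthday at most $2$, so the argument is a bounded case analysis. The only points needing care are the misère convention (a player with no move wins) and the conceptual observation — foreshadowed in the discussion preceding the Proposition — that it is precisely the interaction of $G$ with the \emph{follower} $1$ of $H$, and not any structural feature of $H$ itself, that enlarges the universe past $\D(\bar1)$.
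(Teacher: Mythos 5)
Your proof is correct and follows essentially the same route as the paper: both use the Right end $1 \in \U$ (a follower of $H$) as the distinguishing game, computing $o(1) = \sR$ while $o(G+1) = \P$ because Right's only move from $G+1$ is to $\cgstar + 1$, which Left wins by reverting to $\cgstar$. Your additional remarks (e.g., $o(\cgstar+1) = \N$ and the observation that $1$ witnesses failure of Right $\U$-strongness via Theorem~\ref{theorem:lnsequiv}(d)) are accurate elaborations of the same argument.
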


\begin{proof}
$o(1) = \mathscr{R}$, but $o(G + 1) = \P$: on $G + 1$, Right's only move is to $\cgstar + 1$, which Left can revert to~$\cgstar$.
\end{proof}

Here we see a stark difference between blocking and dead-ending universes. Recall that equivalence of dead-ends is absolute: if $G$ and $H$ are dead-ends and $G \equiv_\U H$ in \emph{any} universe, then in fact $G \equiv_\U H$ in \emph{every} universe. The preceding example shows that this is emphatically \emph{not} true for blocked ends.

An interesting direction for further research is to investigate the structure of all universes generated by {birthday-2} ends.

\subsection*{Scoring Universes}

Notably, the construction of \U-simplest forms also generalizes to scoring universes, and to other universes with arbitrary ``atoms'' in the general setting described by Larsson et~al~\cite{LNS}. In particular, if $\U$ is a universe with atoms~$\mathcal{A}$, then in place of $\Sigma^L$ and~$\Sigma^R$ we have a larger family of ``sigmas,''
\[
\Sigma^L_a \quad\text{and}\quad \Sigma^R_a \qquad (a \in \mathcal{A}).
\]
There is an additional reduction of ``removing dominated sigmas,'' in which we remove $\Sigma^L_a$ from $G$ whenever $G$ has another $\Sigma^L_b$ with $b \geq a$. The simplest forms so obtained are unique, provided that $\U$ has \textbf{distinct atoms}: there are no two atoms $a,b \in \A$ with $a \equiv_\U b$. (If the atoms are indistinct, then there is no way for the simplest form to ``know'' which to prefer.)

\subsection*{Is any of this useful for anything?}
\suppressfloats[t]

As we already know from the study of impartial games~\cite{Con01}, the sad reality of the mis\`ere theory is that games admit precious few reductions in practice, and therefore simplest forms tend to be quite complicated. The simplest form of $2 \times 10$ \textsc{Domineering} in $\mathcal{D}(\bar{1}0)$, for example, has $4,569,496$ edges in its game tree, as compared to a mere $33$ in the normal-play simplest form. It is precisely this problem that led to the introduction of mis\`ere quotients for impartial games by Plambeck~\cite{Pla05}~\cite{PS08}.

\begin{figure}
\centering
\begin{tabular}{c|r|r}
\U & \multicolumn{1}{|c|}{$G$} & \multicolumn{1}{|c}{$H$} \\
\hline
$\mathcal{M}$ & 21,946,743 & 2,441,649 \\
\E &  4,590,982 & 2,411,294 \\
$\D(\bar{1}0)$ & 4,569,496 & 2,411,294 \\
$\D(\bar{1})$ & 4,409,745 & 2,411,241 \\
\D & 3,757 & 981,460 \\
Normal Play & 33 & 1,418
\end{tabular}

\vspace{0.15in}
$G = 2 \times 10$ \textsc{Domineering}

\vspace{0.1in}
$H =$ the \textsc{Clobber} position \texttt{"xoxo|oxox|xox."}
\caption{\label{figure:reductions} Size of the simplest forms, modulo various universes, of two example games: the empty $2 \times 10$ \textsc{Domineering} rectangle~$G$, and a \textsc{Clobber} position $H$ on a $3 \times 4$ board. All numbers were calculated using \texttt{cgsuite}.}
\end{figure}

Moreover, the use of fine-grained restricted universes appears to have little practical effect. If we compute the simplest form of $2 \times 10$ \textsc{Domineering} in $\E$ instead of $\mathcal{D}(\bar{1}0)$, for example, we find a game tree with $4,590,982$ edges. Passing from $\E$ to the restricted universe $\mathcal{D}(\bar{1}0)$ therefore eliminates less than $1\%$ of the information content of the simplest form. Experimenting with calculations of this sort, one quickly concludes that in practice, $\mathcal{D}(\bar{1})$ has almost as much resolving power as~$\E$. See Figure~\ref{figure:reductions}.
(The reductions from $\mathcal{M}$ down to~$\E$, and from $\E$ to $\D$, are more significant. Note, however, that the figure for $\D$ in the case of \textsc{Domineering} is somewhat artificial: $G \not\in \D$, so its \D-simplest form will not respect addition.)

The theory of Section~\ref{section:invariant-forms} must therefore be understood as a fascinating theoretical curiosity, one that is unlikely to be applicable to specific case studies in a way that provides much real insight. Nonetheless, it does clarify our structural understanding of games in the general universe. It has long been recognized that atomic reversibility is an impediment to obtaining simplest forms; the results of this paper show that it is in fact the \emph{only} such obstacle.


In hindsight, augmented games can be seen as the most natural way to express simplest forms, even in situations that were previously understood. The dicot introduced at the beginning of Section~\ref{section:invariant-forms},
\[
G = \cgtwo{0,\cgstar}{\cgstar},
\]
is best expressed in simplest form as an augmented game:
\[
G \equiv_\U \cgtwo{0,\Sigma^L}{\cgstar}.
\]
This form is both more concise (fewer edges in its game tree) and more specific: it explicitly acknowledges that the Left option $\cgstar$ does not ``need'' to be a~$\cgstar$; it is there solely to preserve strongness.

\subsection*{What about loopy games?}

Decades ago, Conway introduced a theory of loopy partizan games~\cite{Con78}, beautifully elucidated in \emph{Winning Ways}~\cite[Chapter 11]{BCG01} (see also \cite[Chapter VI]{Sie13}). This theory, elegant as it is, remains curiously incomplete (in normal play---let alone mis\`ere). In particular, the loopy game known as \emph{Bach's Carousel} (after its discoverer, Clive Bach) contains dominated options that cannot be eliminated and reversible ones that cannot be bypassed, and hence it cannot be reduced to simplest form according to the usual methods.

More than 40 years ago, the authors of \emph{Winning Ways} asked:
\begin{quote}
Is there an alternative notion of simplest form that works for \emph{all} finite loopy games (in particular, for the Carousel)?~\cite{BCG01}
\end{quote}
Despite occasional interest over the ensuing decades (Bach's Carousel appeared as Figure 1.1 in this author's Ph.D.~thesis~\cite{Sie05}), this type of position remains poorly understood.

It requires only a smidgen of imagination to envision parallels with the mis\`ere construction of Section~\ref{section:invariant-forms}. ``Reversible options that cannot be bypassed?'' ``Alternative notion of simplest form?'' Perhaps a similar sort of reduction to the one given here, involving the introduction of tombstones or other ``virtual'' representatives of Carousel-type options, might lead to a crisper understanding of the general loopy theory. But Carousels seem a good deal more varied and complicated than atomic reversibility, so novel understandings appear necessary in order to make much progress. Nonetheless, if the partizan mis\`ere theory can be made tractable, surely there is hope that the general loopy theory can, as well.





\nocite{LNNS}
\bibliography{games}

\end{document}